\newcommand{\dataversione}{August 27, 2022}
\numberwithin{equation}{section}
\newtheoremstyle{mytheorem}
{10pt}
{10pt}
{\it}
{\parindent}
{\bf}
{.}
{ }
{\thmnumber{#2.~}\thmname{#1}\thmnote{~\rm#3}}
\newtheoremstyle{myremark}
{10pt}
{10pt}
{\rm}
{\parindent}
{\bf}
{}
{ }
{\thmnumber{#2.~}\thmname{#1}\thmnote{~\rm#3}}
\newtheoremstyle{myparagraph}
{10pt}
{10pt}
{\rm}
{\parindent}
{\bf}
{.}
{ }
{\thmnumber{#2.~}\thmname{#1}\thmnote{#3}}
\theoremstyle{mytheorem}
\newtheorem{theorem}[subsection]{Theorem}
\newtheorem{lemma}[subsection]{Lemma}
\newtheorem{corollary}[subsection]{Corollary}
\newtheorem{proposition}[subsection]{Proposition}
\theoremstyle{myremark}
\newtheorem{remark}[subsection]{Remark.}
\newtheorem{remarks}[subsection]{Remarks.\hskip-13pt}
\theoremstyle{myparagraph}
\newtheorem{parag}[subsection]{}
\newtheorem*{parag*}{}
    \def\@secnumfont{\sc}
    \def\section{\@startsection%
    {section}
    {1}
    \z@{1.5\linespacing\@plus .2\linespacing}
      {.7\linespacing}
      {\normalfont\sc\centering}}
    \def\sottosezione{\@startsection{subsection}{2}
    \z@{1\linespacing\@plus .2\linespacing}
      {.6\linespacing}
      {\normalfont\bf}}
\renewenvironment{proof}[1][\proofname]{\par 
    \pushQED{\qed}%
    \normalfont \topsep10\p@\@plus6\p@\relax 
    \trivlist 
    \item[\hskip\labelsep 
    \bfseries 
    #1\@addpunct{.}]\ignorespaces}
    {\popQED\endtrivlist\@endpefalse} 
\providecommand{\proofname}{Proof}
\newcommand{\footnoteb}[1]{\footnote{~#1}}
\newlist{enumeraterem}{enumerate}{1}
\setlist[enumeraterem]{label=(\roman*), leftmargin=0pt, itemsep=3pt, itemindent=30pt}
\newlist{enumeratethm}{enumerate}{1}
\setlist[enumeratethm]{label={\rm(\roman*)}, leftmargin=30pt, itemsep=2pt}
\newcommand{\bfi}{\mathbf{i}}
\newcommand{\bfj}{\mathbf{j}}
\newcommand{\R}{\mathbb{R}}
\newcommand{\HH}{\mathbb{H}}
\newcommand{\Mass}{\mathbb{M}}
\newcommand{\Haus}{\mathscr{H}}
\newcommand{\F}{\mathscr{F}}
\newcommand{\Leb}{\mathscr{L}}
\newcommand{\I}{\mathscr{I}}
\newcommand{\wrt}{with respect to\ }
\newcommand{\Gr}{\mathrm{Gr}}
\newcommand{\Span}{\mathrm{span}}
\newcommand{\sign}{\mathrm{sign}}
\newcommand{\supp}{\mathrm{spt}}
\newcommand{\loc}{\mathrm{loc}}
\newcommand{\de}{\mathrm{d}}
\newcommand{\bd}{\partial}
\newcommand{\hodge}{\mathop{\star}}
\newcommand{\Vhat}{\smash{\widehat V}}
\newcommand{\Vbar}{\smash{\overline V}}
\newcommand{\ic}[1]{[\kern-1pt[#1]\kern-1pt]}
\newcommand{\scalar}[2]{\langle #1;\, #2\rangle}
\newcommand{\bigscalar}[2]{\big\langle #1;\, #2\big\rangle}
\DeclareMathOperator{\trace}{\mbox{\Large$\llcorner$}}
\DeclareMathOperator{\antitrace}{\mbox{\kern-1.5pt\Large$\lrcorner$\kern.5pt}}
\DeclareMathOperator{\dive}{div}
\DeclareMathOperator{\esterno}{\mbox{\large$\wedge$}}
\newcommand{\passo}[1]{\medskip\textit{#1}}	
\begin{document}

\thispagestyle{empty}
~\vskip -1.1 cm

	%
	%
{\footnotesize\noindent 
[version: final, \dataversione]
\hfill \emph{J.~Differential~Geom.}, 122 (2022), n.~1, 1-33\par
\hfill DOI~\href{http://dx.doi.org/10.4310/jdg/1668186786}{10.4310/jdg/1668186786} \par
}

\vspace{1.7 cm}

	%
	%
{\centering\Large\bf
On the geometric structure of currents\\
\vskip3pt
tangent to smooth distributions\\
}

\vspace{.7 cm}

	%
	%
{\centering\sc 
Giovanni Alberti, Annalisa Massaccesi, Eugene Stepanov
\\
}

\vspace{.8 cm}

	%
	%
{\rightskip 1 cm
\leftskip 1 cm
\parindent 0 pt
\footnotesize
{\sc Abstract.}
It is well known that a $k$-dimensional smooth surface in a Euclidean 
space cannot be tangent to a non-involutive distribution of 
$k$-dimensional planes.
In this paper we discuss the extension of this statement 
to weaker notions of surfaces, namely integral and normal
currents. We find out that integral currents behave to this regard
exactly as smooth surfaces, while the behavior of normal currents
is rather multifaceted.
This issue is strictly related to a geometric property 
of the boundary of currents, which is also discussed in details.

\par
\medskip\noindent
{\sc Keywords:} 
non-involutive distributions, 
Frobenius theorem,
integral currents, 
normal currents,
geometric property of the boundary.

\par
\medskip\noindent
{\sc 2010 Mathematics Subject Classification:} 
58A30, 49Q15, 58A25, 53C17.
\par
}

\section{Introduction}
\label{sec:intro}
The starting point of this paper is the following implication in 
Frobenius theorem: if $V$ is a distribution of $k$-dimensional 
planes on an open set $\Omega$ in $\R^n$, and $\Sigma$ 
is a $k$-dimensional smooth surface 
which is everywhere tangent to $V$, then $V$ is involutive at every point of
$\Sigma$ or, equivalently, $\Sigma$ does not intersect the 
open set where $V$ is non-involutive.
In the following we refer to this statement 
simply as Frobenius theorem.

In the classical statement it is assumed that  
both the distribution $V$ and the surface $\Sigma$ are 
sufficiently regular. In particular it suffices that 
$V$ be of class $C^1$ and $\Sigma$ be a surface (submanifold)
of class $C^1$, possibly with boundary.
In this paper we discuss the generalization of this result to 
weaker notions of surfaces, though
not weakening the regularity assumption on $V$ 
(see however~\S\ref{nonsmooth}).

We first remark that Frobenius theorem 
does not hold if $\Sigma$ is just
a closed subset of a $k$-dimensional  $C^1$-surface. 
More precisely, 
for every continuous distribution of $k$-planes $V$ there
exists a $C^1$-surface $S$ such that the set $\Sigma$ of all 
points of $S$ where $S$ is tangent to $V$ has positive
$k$-dimensional measure, 
regardless of the involutivity of $V$ 
(this result was proved for a special $V$ in \cite{Balogh}, 
Theorem~1.4; 
the general version can be found in~\cite{Alb-Mas-Mer}).

On the other hand, Frobenius theorem holds
if the boundary of $\Sigma$ (relative to $S$)
is not too large; for example, it suffices that 
the $(k-1)$-dimensional Hausdorff measure
$\Haus^{k-1}(\bd\Sigma)$ be finite, 
see~\S\ref{tangencysets} for more details.

However the most satisfactory version of this statement 
is obtained by considering surfaces and boundary in the sense of currents: 
in Theorem~\ref{thm:frobenius} we show that 
Frobenius theorem holds for all integral currents
tangent to the distribution $V$.
Thus one is naturally led to wonder what happens for the 
largest class of currents with ``nice'' boundary, 
namely normal currents. It turns out that this case
is much more interesting, and in particular the validity of 
Frobenius theorem depends also on how ``diffuse'' the current is
(Theorem~\ref{thm:diffuse}).

Notice that our results are local in nature, and therefore, 
even if stated in the Euclidean space, they actually hold 
in Riemannian manifolds, and even in Finsler manifolds.

Some of the results in this paper were announced in~\cite{Alb-Mas-Lincei}.

\sottosezione*{Description of the results}
Through the rest of this paper $V$ is a $C^1$-distribution
of $k$-planes on an open set $\Omega$ in $\R^n$, 
and $N(V)$ is the open set of all points where $V$ is 
non-involutive.%
\footnoteb{Most of the terminology used in this introduction
is properly defined in Section~\ref{sec:not};
the precise definition of $N(V)$ is given in~\S\ref{def:inv}.}

\begin{theorem}
\label{thm:frobenius}
Let $T$ be an integral 
current in $\Omega$ which is tangent to $V$.%
\footnoteb{The precise meaning of ``$T$ is tangent to $V$'' is
given in \S\ref{def:distrib}.}
Then the support of $T$ does not intersect the non-involutivity 
set~$N(V)$.
\end{theorem}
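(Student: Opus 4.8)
The plan is to argue by contradiction, exploiting the fact that for an \emph{integral} current not only $T$ but also $\bd T$ is rectifiable, hence carried by a set negligible for $\Haus^{k}$ --- precisely the ingredient unavailable for general normal currents. So suppose $\supp T$ meets $N(V)$, fix $x_{0}\in\supp T\cap N(V)$ and a ball $B=B(x_{0},r)\subseteq N(V)$ (we may assume $k\ge2$, since otherwise $N(V)=\varnothing$). Since $V$ is of class $C^{1}$, on $B$ I would choose a $C^{1}$ coframe $\alpha_{1},\dots,\alpha_{n-k}$ for the annihilator of $V$, so that $V(x)=\bigcap_{i}\ker\alpha_{i}(x)$ and each $\de\alpha_{i}$ is a continuous $2$-form on $B$; recall (see~\S\ref{def:inv}) that $V$ is non-involutive at $x$ exactly when $\de\alpha_{i}|_{V(x)}\neq0$ for some $i$. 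Write $T$ in rectifiable form, $\|T\|=\theta\,\Haus^{k}\trace M$ with $M$ a $k$-rectifiable set and $\theta$ a positive integer multiplicity, and with orientation given $\|T\|$-a.e.\ by the unit simple $k$-vector $\xi(x)$, which by tangency of $T$ to $V$ (see~\S\ref{def:distrib} and Remark~\ref{rem:tancurr}) spans $V(x)$.

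The core is two elementary computations. A simple $k$-vector is annihilated by the contraction with a $1$-form iff that form vanishes on its span, so tangency gives $\xi\trace\alpha_{i}=0$ $\|T\|$-a.e., i.e.\ the $(k-1)$-current $T\trace\alpha_{i}$ (which acts by $\gamma\mapsto\scalar{T}{\alpha_{i}\wedge\gamma}$) vanishes on $B$. Next, for every smooth $(k-2)$-form $\gamma$ with compact support in $B$, the Leibniz rule $\de(\alpha_{i}\wedge\gamma)=\de\alpha_{i}\wedge\gamma-\alpha_{i}\wedge\de\gamma$ gives
\[
\scalar{\bd(T\trace\alpha_{i})}{\gamma}=\scalar{T\trace\de\alpha_{i}}{\gamma}-\scalar{(\bd T)\trace\alpha_{i}}{\gamma},
\]
so, since $T\trace\alpha_{i}=0$ on $B$,
\[
T\trace\de\alpha_{i}=(\bd T)\trace\alpha_{i}\qquad\text{as currents in }B .
\]

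Now I would read off both sides. On the left, $T\trace\de\alpha_{i}$ is the current associated with the $(k-2)$-vector measure $\theta\,(\xi\trace\de\alpha_{i})\,\Haus^{k}\trace M$, hence $\|T\trace\de\alpha_{i}\|=\theta\,|\xi\trace\de\alpha_{i}|\,\Haus^{k}\trace M$. Since $x_{0}\in\supp T$ we have $\Haus^{k}(M\cap B)>0$, and since $B\subseteq N(V)$ while $\xi(x)$ spans $V(x)$, for $\Haus^{k}$-a.e.\ $x\in M\cap B$ we get $\xi\trace\de\alpha_{i}(x)\neq0$ for at least one $i$ (a $2$-form contracted with a simple $k$-vector vanishes iff the form vanishes on the span). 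Thus $\|T\trace\de\alpha_{j}\|(B)>0$ for some $j$. On the right, $\bd T$ being an integral $(k-1)$-current, $\|\bd T\|$ is concentrated on a $(k-1)$-rectifiable set $M'$ with $\Haus^{k}(M')=0$, and the displayed identity gives $\|T\trace\de\alpha_{j}\|(A)\le(\sup_{B}|\alpha_{j}|)\,\|\bd T\|(A)$ for Borel $A\subseteq B$; taking $A=B\setminus M'$ shows $\|T\trace\de\alpha_{j}\|$ is concentrated on $M'$, whence
\[
\|T\trace\de\alpha_{j}\|(B)=\int_{B\cap M'\cap M}\theta\,|\xi\trace\de\alpha_{j}|\,\de\Haus^{k}=0 ,
\]
since $\Haus^{k}(M')=0$ --- contradicting the previous sentence.

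The contraction identities and the Leibniz rule are routine; the real content lies in two structural points. First, the hypothesis that $V$ be of class $C^{1}$ is exactly what makes $\de\alpha_{i}$ a continuous $2$-form, so that $T\trace\de\alpha_{i}=(\bd T)\trace\alpha_{i}$ even makes sense. Second --- and this is the decisive step --- $\bd T$ is rectifiable, hence carried by an $\Haus^{k}$-null set: this is where integrality of $T$ is indispensable, and it is precisely what may fail for normal currents, whose boundary can be spread over a set of positive $\Haus^{k}$ measure. That failure is exactly what forces the finer, ``diffuseness''-dependent analysis of Theorem~\ref{thm:diffuse}, and I would expect the obstacle in any attempt to weaken the hypotheses to concentrate there.
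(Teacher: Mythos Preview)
Your proof is correct, and it takes a genuinely different---and considerably more elementary---route than the paper. In the paper, Theorem~\ref{thm:frobenius} is obtained by applying Corollary~\ref{cor:normalfrob} with condition~\ref{cor:normalfrob-c}; but the chain (c)$\Rightarrow$(a) then invokes Theorem~\ref{thm:diffuse}\ref{thm:diffuse-ii}, whose proof passes through the decomposability bundle of~$\mu$ (results from~\cite{Alb-Mar}) and the De~Philippis--Rindler theorem (Proposition~\ref{prop:decomp}). Your argument sidesteps all of this: you use the annihilating coframe $\alpha_1,\dots,\alpha_{n-k}$ and the Leibniz rule to obtain $T\trace\de\alpha_j=(\bd T)\trace\alpha_j$ on $B$, which is a dual (and in some sense simpler) version of the paper's key identity~\eqref{e:d}; then you conclude directly from the fact that $\|\bd T\|$, being carried by a $(k-1)$-rectifiable set, is singular with respect to $\Haus^k$, while the left-hand side is absolutely continuous with respect to $\Haus^k\trace M$ and has nontrivial density on~$N(V)$. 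The paper's approach buys generality---the same machinery yields the full Theorem~\ref{thm:diffuse} for normal currents, from which the integral case drops out---whereas yours buys a self-contained, two-page proof of the integral case that uses nothing beyond the boundary rectifiability theorem and basic exterior calculus. It is worth noting that your identity $T\trace\de\alpha_j=(\bd T)\trace\alpha_j$, applied to an arbitrary normal $T$ tangent to~$V$, already yields Theorem~\ref{thm:diffuse}\ref{thm:diffuse-i} by the same reasoning, so the elementary route goes somewhat further than just Theorem~\ref{thm:frobenius}.
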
 

A version of this statement was first proved in the second author's 
dissertation (\cite{Massaccesi-tesi}, Theorem~2.2.6), 
following a completely different argument.

\medskip
The next step is to consider normal currents.
We recall here that these currents share many properties
with integral currents, including that of having a ``nice''
boundary, but differ in many regards. 
In particular integral $k$-dimensional currents 
are supported on $k$-dimensional (rectifiable) sets, 
while $k$-dimensional normal currents can be 
quite ``sparse'', even absolutely continuous 
\wrt the Lebesgue measure.

\smallskip
The following example, proposed by 
M.~Zworski in~\cite{Zworski}, shows that
Frobenius theorem does not hold in general 
for normal currents.

\begin{parag}[Example]
\label{ex:nonfrob}
Consider a simple $k$-vectorfield $v=v_1\wedge\dots\wedge v_k$
of class $C^1$ on $\R^n$ and let $T$ be the $k$-current given by 
$T=v\,\Leb^n$. Then $T$ is a normal current on every bounded 
open set $\Omega$ in $\R^n$
(see \S\ref{def:curr} and Remark~\ref{rem:curr-iii}) 
and it is clearly tangent to the distribution $V$ spanned 
by $v_1, \dots, v_k$, regardless of its involutivity.
\end{parag}

It turns out that there is a general result
behind this example:  
a normal current $T$ which is tangent 
to a distribution $V$ 
must be sufficiently ``sparse'' on the 
non-involutivity set $N(V)$, and the degree of ``sparseness''
depends on how much non-involutive the distribution $V$~is.

\medskip
A precise statement requires some preparation.
We let $\Vhat$ be the distribution spanned by 
the vectorfields tangent to $V$ and their first commutators
(see \S\ref{def:vhat} for precise definitions),
and for every $d=k, \dots, n$ we set 
\[
N(V,d) := \big\{ x\in\Omega \colon \dim(\Vhat(x))=d\big\}
\, .
\]
(Thus $N(V)$ is the union of all sets $N(V,d)$ with $d=k+1, \dots, n$.)

\smallskip
We then consider a normal $k$-current $T$ on $\Omega$, 
which we write as $T=\tau\mu$ where $\mu$ 
is a finite positive measure and $\tau$ is 
a $k$-vectorfield which is nonzero $\mu$-a.e.\
(cf.~\S\ref{def:curr} and Remark~\ref{rem:curr-i});
in particular the support of $T$ agrees with the 
support of~$\mu$. 
Along the same line we write the boundary of $T$ as
$\bd T=\tau'\mu'$.

We say that $T$ is tangent to $V$ if
$\Span(\tau(x))=V(x)$ for $\mu$-a.e.~$x$.%
\footnoteb{The span of a (non necessarily simple) $k$-vector 
is defined in \S\ref{def:span}.}

We define the degree of sparseness of 
a measure in terms of absolute continuity \wrt
the Hausdorff measure $\Haus^d$ or the 
integral geometric measures $\I^d_t$
(the higher is $d$, the sparser is the measure).
If $T$ is a normal $k$-current and we take $\mu$ and $\mu'$ as above, 
then  $\mu$ is absolutely continuous \wrt $\I^k_t$ 
and therefore also \wrt $\Haus^k$ (see~\S\ref{def:intgeo}),
that is, $\mu \ll \I^k_t \ll \Haus^k$.
Similarly $\mu' \ll \I^{k-1}_t \ll \Haus^{k-1}$.

\medskip
We can now state the main result for normal currents.

\begin{theorem}
\label{thm:diffuse}
Take $V$, $T=\tau\mu$ and $\bd T=\tau'\mu'$ as above, and assume 
that $T$ be tangent to $V$. Then 
\begin{enumeratethm}
\item\label{thm:diffuse-i}
the restriction of $\mu$ to the set $N(V)$ satisfies
$\mu\trace N(V) \ll \mu'$;

\item\label{thm:diffuse-ii}
for $d>k$ there holds
$\mu\trace N(V,d) \ll \I^d_t \ll \Haus^d$;

\item\label{thm:diffuse-iii}
for $d>k$ there holds
$\mu'\trace N(V,d) \ll \Haus^{d-1}$.
\end{enumeratethm}
\end{theorem}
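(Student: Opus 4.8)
The plan is to work out everything from the single identity that expresses the non‑involutivity of $V$ as a pointwise bracket condition, tested against the boundary. Concretely, pick a point $x_0\in N(V)$ and vectorfields $v_1,\dots,v_k$ of class $C^1$ spanning $V$ near $x_0$; choose $i,j$ so that the commutator $[v_i,v_j]$ is not tangent to $V$ at $x_0$, hence (by continuity) on a whole neighbourhood. Choose a $1$-form $\omega$ of class $C^1$, supported near $x_0$, with $\omega$ annihilating $V$ (i.e.\ $\langle\omega; w\rangle=0$ for every $w$ tangent to $V$) and $\langle\omega;[v_i,v_j]\rangle>0$ on a neighbourhood of $x_0$. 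The key computation is then Cartan's formula for $\de\omega$ together with the tangency hypothesis $\Span(\tau)=V$ $\mu$‑a.e.: testing the current $T=\tau\mu$ against $\de\omega$ — more precisely against a suitable $(k-1)$‑form $\alpha$ of the form $\phi\,\iota_{v_i}\iota_{v_j}(\cdots)$, or more directly against $\de(g\,\omega)$ for scalar cutoffs $g$ — one gets, on one hand, that the pairing $\langle T;\de(\cdots)\rangle$ is controlled by $\langle\bd T;\,\cdot\,\rangle$, hence by $\mu'$; and on the other hand, because $\de\omega$ "sees" the bracket $[v_i,v_j]$ which is transverse to $V$, the same pairing reproduces (up to terms that vanish by tangency) $\int g\,\langle\omega;[v_i,v_j]\rangle\,\de\mu$, an integral with a strictly positive density against $\mu$. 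Matching the two gives a local inequality $\mu\trace U \le C\,\mu'\trace U'$ on small neighbourhoods, and a covering argument over $N(V)$ yields \ref{thm:diffuse-i}.

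For \ref{thm:diffuse-ii} the idea is to iterate this mechanism $d-k$ times along the flag $V\subset\Vhat\subset\cdots$, or rather to exploit the definition of $N(V,d)$: near a point of $N(V,d)$ one can find, besides $v_1,\dots,v_k$ spanning $V$, commutators producing $d-k$ further independent directions, so that $\Vhat$ has dimension exactly $d$. The plan is to write $\mu$, locally, in coordinates adapted to this $d$‑plane $\Vhat$ and to show that the constraint "$\bd T$ is a measure" forces $\mu$ to be absolutely continuous with respect to the integral‑geometric measure $\I^d_t$ in those directions; the reason is exactly that, in each of the $d-k$ transverse directions opened up by a commutator, one reruns the argument of part~\ref{thm:diffuse-i}, but now the "boundary" of the sliced current plays the role of $\mu'$, and slicing theory (together with $\mu'\ll\I^{k-1}_t$) upgrades the conclusion from $\mu\ll\Haus^k$ to $\mu\ll\I^d_t$. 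The inclusion $\I^d_t\ll\Haus^d$ is standard (it was recalled in \S\ref{def:intgeo}). Part \ref{thm:diffuse-iii} should then follow by applying part~\ref{thm:diffuse-ii} to the boundary: since $\mu'=\tau'\mu'$ defines the normal current $\bd T$ whose own boundary vanishes, $\bd T$ is (trivially) tangent to nothing constraining, but the point is that on $N(V,d)$ the measure $\mu$ already satisfies $\mu\trace N(V,d)\ll\I^d_t$ and, via part~\ref{thm:diffuse-i}, $\mu\trace N(V,d)\ll\mu'$; a dimensional/covering comparison of these two then forces $\mu'$ itself to be $\Haus^{d-1}$‑absolutely continuous on $N(V,d)$.

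The main obstacle I anticipate is making the "commutator sees the bracket" computation fully rigorous at the level of currents rather than smooth surfaces: one cannot integrate by parts naively because $\tau$ is only an $L^1(\mu)$ vectorfield and $\mu$ is an arbitrary finite measure, so the quantity $\langle T;\de\omega\rangle$ must be handled through the distributional definition $\langle\bd T;\omega\rangle=\langle T;\de\omega\rangle$ and one must be careful that all the "error" terms produced by Cartan's formula — the ones involving $v_\ell$ applied to components of $\tau$ — genuinely vanish by the tangency relation $\Span(\tau)=V$ and not merely approximately. A clean way to organize this is to reduce, via a partition of unity and a $C^1$ change of coordinates flattening $V$ to the first $k$ coordinate planes (possible since $V$ is $C^1$, though $V$ is \emph{not} integrable so this only flattens it pointwise, not as a foliation), to the model situation where tangency means $\tau$ lies in $\Lambda_k(\R^k\times 0)$; then $\de\omega$ restricted against such $\tau$ picks out exactly one mixed partial derivative of the coefficients of $\omega$, which is precisely the non‑involutivity datum. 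The second delicate point is the slicing step in \ref{thm:diffuse-ii}: one needs that slicing a normal current keeps the slices normal with boundary mass controlled in an integrated sense (this is classical, Federer 4.3), and that iterating this $d-k$ times does not lose the absolute‑continuity gain — here the integral‑geometric measures $\I^d_t$ are the natural bookkeeping device, precisely because they are built to be compatible with slicing by coordinate planes.
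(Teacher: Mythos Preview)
Your approach to statement~\ref{thm:diffuse-i} is essentially right and close to the paper's: the key identity~\eqref{e:d}, namely $(\tau'\wedge w)\,\mu' = (v\wedge\dive w)\,\mu$ for every $C^1$ $2$-vectorfield $w$ tangent to~$V$, is exactly the clean formulation of the ``Cartan's formula sees the bracket'' computation you sketch. Decomposing $\mu=\mu_a+\mu_s$ with respect to~$\mu'$, the identity forces $v\wedge\dive w=0$ $|\mu_s|$-a.e., hence $\dive w\in V$ $|\mu_s|$-a.e.\ for all such~$w$, hence $V=\Vhat$ $|\mu_s|$-a.e., so $N(V)$ is $|\mu_s|$-null. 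This is cleaner than extracting a local inequality $\mu\le C\mu'$, but the mechanism is yours.

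Parts~\ref{thm:diffuse-ii} and~\ref{thm:diffuse-iii}, however, have genuine gaps.

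For~\ref{thm:diffuse-ii}, the slicing iteration you propose does not work as stated: slicing $T$ by a coordinate produces normal $(k-1)$-currents, not $k$-currents tangent to a distribution of lower non-involutivity, and there is no evident way to climb the flag $V\subset\Vhat$ one bracket at a time so as to gain absolute continuity with respect to~$\I^d_t$. The paper uses a quite different and deeper input: the \emph{decomposability bundle} $W(\mu,\cdot)$ of~\S\ref{def:decbun}. Since $T$ and $\bd T$ are normal, $W(|\mu|,x)\supset V(x)$ and $W(\mu',x)\supset\Span(\tau'(x))$ almost everywhere; by part~\ref{thm:diffuse-i} the measure $\bar\mu:=|\mu|\trace N(V)$ is absolutely continuous with respect to both $|\mu|$ and~$\mu'$, so $W(\bar\mu,x)$ contains $V(x)+\Span(\tau'(x))$, which equals $\Vhat(x)$ by Theorem~\ref{thm:gpb-adv}. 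Thus $\dim W(\bar\mu,x)\ge d$ on $N(V,d)$, and Proposition~\ref{prop:decomp} (which rests on the De~Philippis--Rindler theorem) yields $\mu\trace N(V,d)\ll\I^d_t$. None of this is recoverable from slicing alone.

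For~\ref{thm:diffuse-iii}, your proposed implication is simply false: from $\mu\trace N(V,d)\ll\I^d_t$ together with $\mu\trace N(V,d)\ll\mu'$ nothing whatsoever follows about the sparseness of~$\mu'$ (for instance $\mu\trace N(V,d)$ could be zero while $\mu'\trace N(V,d)$ is a Dirac mass). The paper uses an entirely separate device, Proposition~\ref{prop:diffuse}: each $v_i$ is a $C^1$ vectorfield with $v_i\wedge\tau=0$ $\mu$-a.e.; letting $\Phi$ be its flow, the homotopy formula gives $\Phi_\#(\ic{0,\delta}\times\bd T)=(\Phi_\delta)_\# T - T$, which by part~\ref{thm:diffuse-ii} is $\ll\Haus^d$. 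A density-point argument at points where $v_i\wedge\tau'\ne 0$ then shows $\mu'\ll\Haus^{d-1}$ on $\{x:v_i(x)\notin\Span(\tau'(x))\}$. Covering $N(V,d)$ by these sets for $i=1,\dots,k$, together with the set where $V\subset\Span(\tau')$ (on which $\dim\Span(\tau')\ge d$, so the decomposability bundle again gives $\mu'\ll\Haus^d$), finishes the proof. This flow-and-homotopy step is the missing idea in your outline.
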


Using Theorem~\ref{thm:diffuse} we can show that Frobenius theorem
holds for normal currents that satisfy certain additional conditions:

\begin{corollary}
\label{cor:normalfrob}
Take $V$, $T=\tau\mu$ and $\bd T=\tau'\mu'$ as above, 
and assume that $T$ be tangent to~$V$.
If any of the following conditions holds then the support
of $T$ does not intersect~$N(V)$:
\begin{enumeratethm}[label=\rm(\alph*)]

\item\label{cor:normalfrob-a}
$\mu$ is concentrated on a $\I_t^{k+1}$-null Borel set;%
\,\footnoteb{We say that $\mu$ is concentrated on 
a Borel set $E$ if $\mu(\Omega\setminus E)=0$;
this implies that the support of $\mu$ is contained
in the closure of $E$ (but not necessarily in~$E$).}

\item\label{cor:normalfrob-b}
$\mu$ is concentrated on a $\mu'$-null Borel set;%

\item\label{cor:normalfrob-c}
$T$ is a rectifiable current (possibly with non-integral multiplicity);

\item\label{cor:normalfrob-d}
$\bd T=0$.
\end{enumeratethm}
\end{corollary}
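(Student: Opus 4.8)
The plan is to deduce all four statements directly from Theorem~\ref{thm:diffuse}; beyond that there is only bookkeeping. First I would record the elementary reduction that, since $N(V)$ is open and $\supp T=\supp\mu$ (see \S\ref{def:curr}), in each case it suffices to prove $\mu(N(V))=0$: for then $\Omega\setminus N(V)$ is a closed set carrying all of $\mu$, hence $\supp\mu\subseteq\Omega\setminus N(V)$. Writing $N(V)=\bigcup_{d=k+1}^{n}N(V,d)$, it is enough to show $\mu(N(V,d))=0$ for every $d>k$.

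Next I would observe that (d) is a special case of (b) and (c) a special case of (a), so that only (a) and (b) call for a genuine argument. Indeed, if $\bd T=0$ then $\mu'=0$ and $\mu$ is trivially concentrated on the $\mu'$-null Borel set $\Omega$; and if $T$ is a rectifiable current then $\mu$ has the form $\theta\,\Haus^k\trace R$ with $R$ a $k$-rectifiable set, so $\Haus^{k+1}(R)=0$ and hence $\I^{k+1}_t(R)=0$ (recall $\I^{k+1}_t\ll\Haus^{k+1}$), which means that $\mu$ is concentrated on a $\I^{k+1}_t$-null Borel set (after, if necessary, enlarging $R$ to a Borel hull).

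For (b): take $E$ to be the $\mu'$-null Borel set on which $\mu$ is concentrated; by part~\ref{thm:diffuse-i} of Theorem~\ref{thm:diffuse} one has $\mu\trace N(V)\ll\mu'$, so $\mu(N(V)\cap E)=0$ since $\mu'(E)=0$, and as $\mu(\Omega\setminus E)=0$ this gives $\mu(N(V))=0$. For (a): take $E$ to be the $\I^{k+1}_t$-null Borel set on which $\mu$ is concentrated; by part~\ref{thm:diffuse-ii} of Theorem~\ref{thm:diffuse}, for each $d>k$ one has $\mu\trace N(V,d)\ll\I^d_t$, and using that a $\I^{k+1}_t$-null set is automatically $\I^d_t$-null for every $d\ge k+1$ (monotonicity of the integral-geometric measures in the dimension, as for Hausdorff measures; see \S\ref{def:intgeo}) one gets $\I^d_t(E)=0$, whence $\mu(N(V,d)\cap E)=0$ and therefore $\mu(N(V,d))=0$; summing over $d=k+1,\dots,n$ yields $\mu(N(V))=0$.

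I do not expect any genuine obstacle here, as Theorem~\ref{thm:diffuse} carries all the weight. The only points deserving explicit mention are the passage from $\mu(N(V))=0$ to $\supp T\cap N(V)=\emptyset$ (openness of $N(V)$), the structure $\mu=\theta\,\Haus^k\trace R$ of the mass measure of a rectifiable current, and the monotonicity $\I^{k+1}_t$-null $\Rightarrow$ $\I^d_t$-null for $d>k+1$ --- each elementary, but worth spelling out.
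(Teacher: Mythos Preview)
Your proof is correct and follows essentially the same route as the paper's own argument: reduce to $\mu(N(V))=0$ using that $N(V)$ is open, derive (a) from Theorem~\ref{thm:diffuse}\ref{thm:diffuse-ii} via the decomposition $N(V)=\bigcup_{d>k}N(V,d)$, derive (b) directly from Theorem~\ref{thm:diffuse}\ref{thm:diffuse-i}, and then observe that (c)$\Rightarrow$(a) and (d)$\Rightarrow$(b). The only minor remark is that the monotonicity ``$\I^{k+1}_t$-null $\Rightarrow$ $\I^d_t$-null for $d\ge k+1$'' is not stated explicitly in \S\ref{def:intgeo}, but it follows readily from the projection characterization there (and the paper uses the same fact implicitly when it asserts that $\mu$ is singular with respect to $\I^d_t$ for every $d\ge k+1$).
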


\begin{remarks}\label{rem:normalfrob}
\begin{enumeraterem}[ref={\ref*{rem:normalfrob}(\roman*)}]
\item
\label{rem:normalfrob-i}
Regarding condition~(a) in Corollary~\ref{cor:normalfrob}, 
we recall that the following implications hold for every Borel
set $E$:
\[
\I_t^{k+1}(E)=0 
\ \Leftarrow\
\Haus^{k+1}(E)=0
\ \Leftarrow\
\dim_H(E)<k+1
\, ,
\]
where $\dim_H(E)$ is the Hausdorff dimension of $E$.

\item
\label{rem:normalfrob-ii}
Under condition~(c), Corollary~\ref{cor:normalfrob} 
generalizes Theorem~\ref{thm:frobenius}.

\item
\label{rem:normalfrob-iii}
Even though the measures $\mu$ and $\mu'$ in the representations 
of $T$ and $\bd T$ are not unique, the statements of Theorem~\ref{thm:diffuse} 
and Corollary~\ref{cor:normalfrob} do not depend on the choice 
of these measures (cf.\ Remark~\ref{rem:curr-ii}).

\end{enumeraterem}
\end{remarks}

As already pointed out in \cite{Alb-Mas-Lincei}, 
the validity of Frobenius theorem for normal currents 
is strictly related to the following property
of the boundary.

\begin{parag}[Geometric property of the boundary]
\label{def:gpb}
Let $T=\tau\mu$ be a normal $k$-current on the open set $\Omega$
with boundary $\bd T=\tau'\mu'$. 
We say that $T$ has the \emph{geometric property of the boundary} if, 
up to a modification of $\tau$ in a $\mu$-null set, 
\begin{enumeratethm}[label=\rm(\alph*)]
\item\label{thm:gpb-i}
the map $x\mapsto\Span(\tau(x))$
is continuous on the support of $T$;
\item\label{thm:gpb-ii}
$\Span(\tau'(x)) \subset \Span(\tau(x))$ for $\mu'$-a.e.~$x$.
\end{enumeratethm}
It is easy to check that if $T$ is tangent to the distribution 
$V$ then these conditions are equivalent to the inclusion
\begin{equation}
\label{e:gpb}
\Span(\tau'(x)) \subset V(x)
\quad\text{for $\mu'$-a.e.~$x$.}
\end{equation}
\end{parag}

\begin{theorem}
\label{thm:gpb-frob}
Take $V$ and $T=\tau\mu$ as above, 
and assume that $T$ be tangent to~$V$.
Then the following assertions are equivalent:
\begin{enumeratethm}
\item\label{thm:gpb-frob-i}
$T$ has the geometric property of the boundary, 
that is, \eqref{e:gpb} holds;
\item\label{thm:gpb-frob-ii}
the support of $\mu$ does not intersect~$N(V)$.
\end{enumeratethm}
\end{theorem}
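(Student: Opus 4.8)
The implication \ref{thm:gpb-frob-ii}$\Rightarrow$\ref{thm:gpb-frob-i} is the easy direction and I would dispatch it first. If $\supp\mu$ avoids $N(V)$, then $V$ is involutive on a neighbourhood of $\supp T$, so by the classical Frobenius theorem the support is foliated by integral manifolds of $V$; more directly, one invokes Theorem~\ref{thm:diffuse}\ref{thm:diffuse-i}, which gives $\mu\trace N(V)\ll\mu'$ — but here $\mu\trace N(V)=0$ is not quite the point; rather, on $\supp\mu$ the distribution $V$ is $C^1$ and involutive, hence locally $V=\ker(\de f_1)\cap\dots\cap\ker(\de f_{n-k})$ for suitable $C^1$ functions $f_i$ that are constant along $V$; since $T$ is tangent to $V$ one has $\langle\de f_i;\tau\rangle=0$ $\mu$-a.e., whence $\bd T$ annihilates the $f_i$ in the appropriate sense and a short computation forces $\Span(\tau'(x))\subset V(x)$ for $\mu'$-a.e.\ $x$, which is \eqref{e:gpb}.

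The substantive direction is \ref{thm:gpb-frob-i}$\Rightarrow$\ref{thm:gpb-frob-ii}, and I would argue by contradiction: suppose \eqref{e:gpb} holds yet there is a point $x_0\in\supp\mu\cap N(V)$. Fix vectorfields $v_1,\dots,v_k$ of class $C^1$ spanning $V$ near $x_0$, and pick a pair whose commutator $w:=[v_i,v_j]$ satisfies $w(x_0)\notin V(x_0)$; after a $C^1$ change of coordinates I may assume $V(x_0)=\R^k\times\{0\}$ and that $w$ has a nonzero component in some direction $e_\ell$ with $\ell>k$. The plan is to test $\bd T$ against the $(k-1)$-form $\omega=\iota_{v_i}\iota_{v_j}\,\de x^{I}$ (equivalently, exploit that $\de$ of the $k$-form dual to $v_i\wedge v_j\wedge(\text{complement})$ picks up $[v_i,v_j]$), and to use tangency $\Span(\tau)=V$ together with \eqref{e:gpb} — which says $\Span(\tau')\subset V$ — to conclude that $\bd T$ is "too tangential" to detect the commutator, while the Stokes identity $\langle\bd T;\omega\rangle=\langle T;\de\omega\rangle$ forces a contradiction because $\de\omega$ restricted to $V$ involves exactly the transverse component of $w$. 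Concretely, one shows that the geometric property of the boundary propagates: \eqref{e:gpb} makes $\bd T$ again a normal current tangent-compatible with $V$, so one can localise and integrate the relation $\de(\text{tangential }k\text{-form})=\text{(commutator term)}$ against $\mu$, and the commutator term is nonzero on a set of positive $\mu$-measure near $x_0$ (since $x_0\in\supp\mu$ and $w$ is transverse there), yet the left side vanishes by tangency of $T$ and $\bd T$ — contradiction.

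The main obstacle I anticipate is making the last paragraph rigorous without smoothness of $\tau$: the identity "$\de$ of the tangential form $=$ commutator term" is a pointwise statement about the $C^1$ frame $v_i$, but $T=\tau\mu$ is only a measure-coefficient current, so I cannot differentiate $\tau$. The right device is to choose, via the continuity in \ref{thm:gpb-i}, a genuinely continuous representative of $x\mapsto\Span\tau(x)=V(x)$ on $\supp T$ and then approximate: mollify $\mu$ slightly, or rather test against a carefully built $C^1$ $(k-1)$-form $\eta$ with $\supp\eta$ in a small ball $B$ around $x_0$, chosen so that $\eta$ is "subordinate" to the frame (i.e.\ $\eta=g\,\iota_{v_i}\iota_{v_j}(v_1^\flat\wedge\dots\wedge v_k^\flat)$ for a bump $g$). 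Then $\langle T;\de\eta\rangle$ splits into a term that vanishes because $\tau\in\Lambda_k V$ and $\de\eta$ has no purely-$V$ part after using $\de v_m^\flat$, plus a boundary term controlled by $\langle\bd T;\eta\rangle$ which vanishes by \eqref{e:gpb}; what survives is $\int_B g\,\langle\tau;\,w^\flat\wedge(\cdots)\rangle\,\de\mu$, and transversality of $w$ at $x_0$ plus $x_0\in\supp\mu$ makes this strictly nonzero for a suitable sign choice of $g$. Bookkeeping the exterior-algebra contractions and verifying that the "$V$-part" genuinely cancels (this is where involutivity fails, so the cancellation is exactly obstructed) is the delicate computational core; everything else is localisation and standard current calculus.
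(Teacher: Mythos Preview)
Your strategy for (i)$\Rightarrow$(ii) --- test against a $(k-1)$-form built from the frame so that Stokes isolates the commutator --- is exactly the paper's mechanism, but your concrete form fails on two counts. First, $\eta = g\,\iota_{v_i}\iota_{v_j}(v_1^\flat\wedge\dots\wedge v_k^\flat)$ is a $(k-2)$-form, not a $(k-1)$-form, so it cannot be paired with $\bd T$. Second, and more structurally, what makes the computation close is the vanishing $v\trace\alpha = 0$ (so that $T\trace\alpha = 0$ and Leibniz gives $(\bd T)\trace\alpha = T\trace\de\alpha$ cleanly); your $\eta$ has no such property. The paper's choice is $\alpha := \hodge(w\wedge u)$ with $w=w_1\wedge w_2$ a $C^1$ simple $2$-vectorfield tangent to $V$ and $u$ a \emph{constant} simple $(n-k-1)$-vector (Lemma~\ref{lem:alpha}). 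Then $v\trace\alpha=0$ because $(v\trace\beta)\wedge w$ is a $(k+1)$-vector in the $k$-plane $V$ for every $1$-covector $\beta$, while Cartan's formula gives $\scalar{v}{\de\alpha}=\scalar{v\wedge\dive w\wedge u}{\de x}$. Varying $w$ and $u$ yields the key measure identity $(\tau'\wedge w)\,\mu' = (v\wedge\dive w)\,\mu$ (Proposition~\ref{prop:form_d}), and both directions of the theorem are then read off from its consequence Theorem~\ref{thm:gpb-adv} together with Theorem~\ref{thm:diffuse}\ref{thm:diffuse-i}.

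Your argument for (ii)$\Rightarrow$(i) via first integrals $f_i$ also has a genuine gap: $V$ is only $C^1$, so the Frobenius chart produces $f_i\in C^1$ and $\de f_i$ is merely continuous; the Leibniz rule \eqref{e:leibinz-curr} you need to pass from $T\trace\de f_i=0$ to $(\bd T)\trace\de f_i=0$ requires the form to be $C^1$. The paper sidesteps this by reusing the same key identity rather than invoking integral manifolds: Theorem~\ref{thm:gpb-adv}\ref{thm:gpb-adv-i} gives $\Span(\tau')\subset V$ for $\mu'_s$-a.e.\ $x$ unconditionally, and part~\ref{thm:gpb-adv-ii} says $\Span(\tau')+V=\Vhat$ for $\mu'_a$-a.e.\ $x$, which under hypothesis~\ref{thm:gpb-frob-ii} (so $\Vhat=V$ on $\supp\mu\supset\supp\mu'$) yields $\Span(\tau')\subset V$ for $\mu'_a$-a.e.\ $x$ as well.
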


\begin{remarks}\label{rem:gpb1}
\begin{enumeraterem}[ref={\ref*{rem:gpb1}(\roman*)}]
\item
The current associated to an oriented surface $\Sigma$ of class $C^1$ 
has the geometric property of the boundary; indeed condition 
\ref{thm:gpb-ii} in \S\ref{def:gpb} reduces to the fact that for 
every $x\in \bd\Sigma$ the tangent space $T_x(\bd\Sigma)$ is 
contained in~$T_x\Sigma$.

\item
Example~\ref{ex:nonfrob} and Theorem~\ref{thm:gpb-frob}
show that there are normal currents $T$ which are tangent to 
a distribution $V$ of class $C^1$ and do not have
the geometric property of the boundary.
In this case one may ask where the inclusion 
$\Span(\tau'(x)) \subset V(x)$ holds and
where it does not; a detailed answer is given 
in Theorem~\ref{thm:gpb-adv} and Remark~\ref{rem:gpb-adv}.

\item\label{rem:gpb1-iii}
Theorem~\ref{thm:gpb-frob} implies that
the geometric property of the boundary holds if
$T$ is tangent to a distribution of $k$-planes 
of class $C^1$ and satisfies one of the conditions 
\ref{cor:normalfrob-a}--\ref{cor:normalfrob-d} in 
Corollary~\ref{cor:normalfrob}, e.g., 
if $T$ is an integral current.
In \cite{Alb-Mas} we give an example of 
integral current which is tangent to a 
\emph{continuous} distribution of $k$-planes
and does not have the geometric property of the boundary.
\end{enumeraterem}
\end{remarks}

\sottosezione*{Additional comments}
\begin{parag}[On the geometric property of the boundary]
\label{rem:gpb2}
We collect here further remarks on the property defined in~\S\ref{def:gpb}.

\begin{enumeraterem}
\item
The continuity requirement~\ref{thm:gpb-i} in~\S\ref{def:gpb}.
is needed to make the definition meaningful.
Indeed if we drop this requirement then every current $T$
such that $\bd T$ is singular \wrt $T$ (that is, $\mu'$
is singular \wrt $\mu$) has the geometric
property of the boundary---the point is that the $k$-vectorfield 
$\tau$ is only determined up to $\mu$-null sets, 
and therefore it can be arbitrarily modified in 
a set which is $\mu'$-full.

\item
The relation between the geometric property of the boundary 
and Frobenius theorem for currents was first pointed out
by the second author in her dissertation~\cite{Massaccesi-tesi}, 
where a version of Theorem~\ref{thm:frobenius} is obtained
as a corollary of the geometric property of the boundary 
of integral currents (\cite{Massaccesi-tesi}, Lemma~2.2.1).

\item
We point out that in \cite{Massaccesi-tesi} 
and \cite{Alb-Mas} the sentence ``the current $T$ 
is tangent to the distribution $V$''
has a stronger meaning than in this paper.
Here it means that the tangent plane 
$\Span(\tau(x))$ is prescribed $\mu$-a.e., 
while there it means that both the tangent plane
$\Span(\tau)$ and its orientation are prescribed
$\mu$-a.e.

Under this stronger notion of tangency, in \cite{Alb-Mas} 
it is proved that the geometric property of the boundary 
holds for integral currents that are tangent to a continuous
distribution $V$ of $k$-planes (while here we need that
$V$ be of class~$C^1$, cf.~Remark~\ref{rem:gpb1-iii}).
\end{enumeraterem}
\end{parag}

\begin{parag}[An open problem]
\label{openproblem}
The statement of Theorem~\ref{thm:diffuse} depends 
crucially on the sets $N(V,d)$, which are defined  
using the distribution $\Vhat$ spanned by the vectorfields 
tangent to $V$ and their first commutators (see~\S\ref{def:vhat}).
In this context it is also natural to consider 
the distribution $\Vbar$
spanned by the Lie algebra generated by $V$, 
that is, by the vectorfields tangent to $V$
and their commutators of all orders.
Clearly the distribution $\Vbar$ contains $\Vhat$, 
and the inclusion may be strict.  
If this is the case, replacing the sets $N(V,d)$ by
\[
{\overline N}(V,d) := \big\{ x\in\Omega \colon \dim(\Vbar(x))=d \big\}
\]
in statements~\ref{thm:diffuse-ii} and \ref{thm:diffuse-iii} 
of Theorem~\ref{thm:diffuse} yields a stronger results. 
We believe that such results are true, 
but cannot be obtained by a modification 
of the present proof.
\end{parag}

\begin{parag}[Sobolev sets]
\label{sobolevsets}
Extensions of Frobenius theorem to weaker 
notions of surfaces have been studied by many authors.
For instance, in \cite{Ma-Ma-Mo}, Theorem~2.1, it 
is proved that \emph{Sobolev sets} of dimension
$m$ in the (sub-Riemannian) Heisenberg group $\HH^n$
cannot be horizontal for $m>n$, that is, images of Sobolev maps 
with derivative of rank $m$ from open subsets of $\R^m$ 
into $\R^{2n+1}\simeq\HH^n$ cannot be tangent to the 
horizontal distribution.

Using Theorem~\ref{thm:frobenius} we partially recover
this result and extend it to a more general setting:

\begin{theorem}
\label{thm:ntv}
Let $\Omega$ be an open subset in $\R^n$ and $V$ 
a $C^1$-distri\-bu\-tion of $k$-planes on $\Omega$.
Let $A$ be an open set in $\R^k$ and let 
$u:A\to\Omega$ be a continuous map of class 
$\smash{W^{1,p}_\loc}$ with $p>k$
such that, for a.e.~$z\in A$, the image of 
the differential of $u$ at $z$ is $V(u(z))$. 
Then $u(A)$ does not intersect the 
non-involutivity set~$N(V)$.
\end{theorem}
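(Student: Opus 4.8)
The plan is to reduce the statement to Theorem~\ref{thm:frobenius} by associating to the map $u$ an integral current that is tangent to $V$. First I would fix a ball $B \Subset A$ and consider the pushforward current $T := u_\#\ic{B}$, where $\ic{B}$ is the current of integration over $B$ with its standard orientation. Since $u$ is continuous and of class $W^{1,p}_\loc$ with $p>k$, the pushforward is well-defined and $T$ is an integral current in $\Omega$ (this uses that for $p>k$ the map $u$ has a representative for which the area formula and the homotopy formula for pushforwards of currents apply; one may invoke the theory of pushforward of currents under Sobolev maps, e.g.\ via approximation by smooth maps, exploiting that $W^{1,p}$-convergence for $p>k$ controls the relevant masses). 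The key point is that the carrying $k$-vectorfield of $T$ at $\Haus^k$-a.e.\ point $y$ in the image is, up to orientation and a positive multiplicity, the simple $k$-vector spanning the image of $Du(z)$ for the preimages $z$; by the hypothesis this image equals $V(u(z)) = V(y)$, so $\Span(\vec T(y)) = V(y)$, i.e.\ $T$ is tangent to $V$ in the sense of \S\ref{def:distrib}.

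Next I would apply Theorem~\ref{thm:frobenius} to conclude that $\supp T$ does not intersect $N(V)$. The remaining step is to recover information about $u(B)$ itself from $\supp T$. A priori $\supp T$ may be smaller than $u(B)$ (cancellation in the pushforward, or sets of multiplicity summing to zero), so this is where care is needed: I would instead run the argument with a sufficiently rich family of subdomains. Concretely, for a.e.\ $z_0 \in A$ with $Du(z_0)$ of rank $k$, the point $y_0 = u(z_0)$ is a "density point" in the sense that, choosing $B$ a small ball around $z_0$, the current $T = u_\#\ic{B}$ has $y_0 \in \supp T$ — because near $z_0$ the map $u$ is, after a Lipschitz modification on a small set, close to the affine map $z\mapsto y_0 + Du(z_0)(z-z_0)$, whose pushforward current is a nonzero multiplicity-one disk through $y_0$; the mass estimate for the error then shows $y_0$ cannot be cancelled. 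Hence $y_0 \notin N(V)$. Since this holds for a.e.\ $z_0 \in A$ and $N(V)$ is open, while $u$ is continuous, the set $u^{-1}(N(V))$ is open and of full-measure complement in $A$, which forces it to be empty; therefore $u(A) \cap N(V) = \emptyset$.

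The main obstacle I anticipate is the second step — establishing that $\supp(u_\#\ic{B})$ contains $u(z_0)$ for a.e.\ $z_0$, i.e.\ ruling out pathological cancellation in the pushforward of a Sobolev current. The natural route is a blow-up / differentiability argument: at a.e.\ $z_0$, $u$ is approximately differentiable and the normalized restrictions $u$ near $z_0$ converge (in $W^{1,p}$ of a fixed ball, hence with convergence of the pushforward currents in the flat norm together with mass bounds) to the nonzero affine disk, whose support is a genuine $k$-disk through $y_0$; lower semicontinuity of mass under flat convergence, applied to suitable cones or to restrictions of $T$ to small balls around $y_0$, then gives $\Mass(T \trace B(y_0,r)) \gtrsim r^k$ for small $r$, which is exactly $y_0 \in \supp T$. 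A cleaner alternative, if one wants to avoid pushforward-current technology altogether, is to note that $u$ restricted to a full-measure set is countably Lipschitz, decompose $A$ into pieces on which $u$ is Lipschitz, and apply Theorem~\ref{thm:frobenius} to the integral currents $u_\#\ic{B_j}$ obtained from these Lipschitz pieces, again recovering $u(z_0)\notin N(V)$ at a.e.\ point; continuity of $u$ and openness of $N(V)$ finish the proof as before.
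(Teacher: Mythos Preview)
Your overall strategy --- build an integral $k$-current tangent to $V$ out of $u$ and invoke Theorem~\ref{thm:frobenius} --- is the same as the paper's, and the proposal is essentially correct. The organization, however, differs in a way that makes your route longer than necessary.

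The paper argues by contradiction: assume $u(z_0)\in N(V)$ for some $z_0$, and pick a small ball $U$ about $z_0$ with $u(U)\subset N(V)$ (using continuity of $u$ and openness of $N(V)$) and with $u|_{\bd U}\in W^{1,p}$. Then, rather than pushing $\ic{U}$ forward directly, it passes through the \emph{graph}: the graph $\Gamma\subset\R^k\times\R^n$ of $u|_U$ is $k$-rectifiable with finite $\Haus^k$-measure, and by \cite{GMS}, \S2.5, Theorem~1 (this is where $p>k$ is used) the associated current $\ic{\Gamma}$ is integral. One then projects via $p:\R^k\times\R^n\to\R^n$. The resulting current $p_\#\ic{\Gamma}$ is integral, tangent to $V$, supported in $u(\overline U)\subset N(V)$, and --- after possibly shrinking $U$ to exploit the maximal-rank hypothesis on a set of positive measure --- nontrivial. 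This immediately contradicts Theorem~\ref{thm:frobenius}.

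The point is that once you localize so that $u(U)\subset N(V)$, you only need \emph{nontriviality} of the pushforward, not the finer statement that $u(z_0)\in\supp T$. This is exactly the ``obstacle'' you flag, and the paper's contradiction set-up bypasses it entirely: cancellation in the projection can at worst make $p_\#\ic{\Gamma}$ vanish, but the rank condition on a set of positive measure rules that out on a suitable subdomain. Your blow-up/density argument would also work, and your closing step (the preimage $u^{-1}(N(V))$ is open with full-measure complement, hence empty) is a nice way to finish; but going through the graph and arguing by contradiction is shorter and lets you cite the existing Cartesian-current machinery directly rather than redevelop Sobolev pushforwards by hand.
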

\end{parag}

\begin{parag}[Tangency sets]
\label{tangencysets}
Given a distribution of $k$-planes $V$ and 
a $k$-dimensional surface $S$ of class $C^1$, we say that a 
closed subset $\Sigma$ of $S$ is a \emph{tangency set}
of $S$ and $V$ if the tangent space $T_xS$ agrees with $V(x)$
for every $x\in\Sigma$.
In this context the statement of Frobenius theorem 
reduces to 
\begin{equation}
\label{e:tangencysets}
\Haus^k\big( \Sigma\cap N(V) \big) =0 
\, .
\end{equation}
As already pointed out at the beginning of this introduction, 
if $S$ is of class~$C^1$ then \eqref{e:tangencysets} does not hold 
for all tangency sets $\Sigma$. 
However it holds if $\Sigma$ has finite perimeter relative to $S$.%
\footnoteb
{This claim follows from two results by S.~Delladio: 
in \cite{Del12}, Corollary~4.1, he proves that 
$\Haus^k$-a.e.~point $x$ of a finite perimeter set $\Sigma$ 
is a \emph{superdensity} point, i.e.,  
$\Haus^k\big( (B(x,r)\cap S)\setminus\Sigma \big) =o(r^{k+1})$,  
and in \cite{Del_BV}, Corollary~1.1, 
he proves that the set of superdensity points of
$\Sigma$ does not intersect~$N(V)$, 
and therefore \eqref{e:tangencysets} holds.}
Note that this condition is implied by (but not equivalent to)
any of the following:
a)~the (topological) boundary of $\Sigma$ relative to $S$ is $\Haus^{k-1}$-finite;
b)~the boundary of the canonical current associated to $\Sigma$ has finite mass.

On the other hand, if the surface $S$ is of class $C^2$, 
or even of class $C^{1,1}$, then \eqref{e:tangencysets} holds 
for every tangency set $\Sigma$, regardless of the regularity of
its boundary (see for instance \cite{Ba_Pin_Roh}, Theorem~1.3).
This result is generalized in \cite{Alb-Mas-Mer} 
by proving that if $S$ is of class $C^{1,\alpha}$ 
for some $0<\alpha<1$ then Frobenius theorem holds
for every tangency set $\Sigma$ whose (distributional) 
boundary has a suitable \emph{fractional} regularity.
This shows that the validity of Frobenius theorem 
depends on a combination of the regularity of 
$\bd\Sigma$ and of the regularity of the surface~$S$. 

It would be interesting to extend this result to 
some class of currents.
\end{parag}

\begin{parag}[Carnot-Carath\'eodory spaces]
The tangency set $\Sigma$ with $\Haus^k(\Sigma)>0$
mentioned in \S\ref{tangencysets} is a non-trivial 
$k$-rectifiable set in~$\Omega$. 
However, if the distribution $V$ satisfies H\"ormander condition 
and we replace the Euclidean distance in $\Omega$ with the 
Carnot-Carath\'eodory distance associated to $V$, 
then $\Omega$ contains only trivial $k$-rectifiable sets,
and in particular $\Sigma$ is no longer rectifiable
(see \cite{Magnani-2} for the case of Heisenberg groups, 
and \cite{Alb-Mas-Mer-0} for a more general context).
\end{parag}

\begin{parag}[Non-smooth distributions]
\label{nonsmooth}
Through this paper we always assume that the distribution 
$V$ is of class $C^1$, which is the minimal regularity 
required to define involutivity in the classical sense
(see~\S\ref{def:inv}).
We show in \S\ref{def:weakinv} that it is possible 
to define involutivity also if $V$ is less regular than $C^1$
using a suitable distributional formulation.
In order to extend the results 
stated above to less regular~$V$, 
a major difficulty seems to be the correct 
definition of the non-involutivity set~$N(V)$.
\end{parag}

\begin{parag*}[Structure of the paper]
Section~\ref{sec:not} contains the notation and 
some preliminary results.
The main result in Section~\ref{sec:main} is the key 
identity \eqref{e:d}, which allows us to establish a very 
precise connection between Frobenius theorem for normal 
currents and the geometric property of the boundary 
(Theorem~\ref{thm:gpb-adv}).
All statements given in this introduction are more 
or less straightforward consequences of identity \eqref{e:d} and 
Theorem~\ref{thm:gpb-adv}; the proofs are collected 
in Section~\ref{sec:proofs}.  
\end{parag*}

{\small
\begin{parag*}[Acknowledgements]
This research was partly carried out during several visits of the authors:
A.M.\ at the Mathematics Department in Pisa (supported by the University of Pisa through 
the 2015 PRA Grant ``Variational methods for geometric problems'');
E.S.\ at the Mathematics Department in Pisa (supported by the 2018 INdAM-GNAMPA 
project ``Geometric Measure Theoretical approaches to Optimal Networks'');
G.A.\ and A.M.\ at CIRM in Trento (supported by the CIRM ``Research in Pairs'' program).

The research of G.A.\ has been partially supported by the Italian Ministry of University and Research (MIUR) 
through PRIN project 2010A2TFX2
and by the European Research Council (ERC) through project 291497. 
A.M.\ has been partially supported by ERC through project 306247 
and by the European Union's Horizon 2020 programme through project 752018.
E.S.~has been partially supported by the Russian Foundation for Basic Research (RFBR)
through grant {\#20-01-00630A}.

We thank Andrea Merlo for his comments on an earlier version of this paper
and for pointing out a mistake therein. 
\end{parag*}
}

\section{Notation and preliminary results}
\label{sec:not}
We assume that the reader is somewhat familiar with the theory of currents.
Therefore in this section we only briefly recall the basic notions 
of multilinear algebra and of the theory of currents, mainly to fix 
the notation, and describe in more details only those notions 
that are of less common use.

Through this paper we tacitly assume that sets and functions are 
Borel measurable and measures are defined on the Borel 
$\sigma$-algebra, and are real-valued and finite 
(with the notable exception of Lebesgue, Hausdorff 
and integral geometric measures).

Here is a list of frequently used notations:
\begin{itemize}
[leftmargin=50pt, itemsep=4 pt plus 1pt, labelsep=10pt]

\item[{$\mu\trace F$}]
restriction of a measure $\mu$ to a Borel set $F$, 
i.e., $[\mu\trace F](E) := \mu(E\cap F)$ for every Borel set $E$ in $X$;

\item[{$\rho\mu$}]
measure associated to a measure $\mu$ on $X$ and a Borel density $\rho$, 
that is, $[\rho\mu](E) := \int_E \rho \, d\mu$ for every Borel set $E$ in $X$;

\item[{$f_\#\mu$}]
pushforward of a measure $\mu$ on $X$ according 
to a Borel map $f:X\to Y$, that is, 
$[f_\#\mu](E) := \mu(f^{-1}(E))$ for every Borel set $E$ in~$Y$;

\item[{$f_\#T$}]
pushforward of a current $T$ according 
to a map~$f$ (see, e.g., \cite{Kra-Par}, \S7.4.2);

\item[{$|\mu|$}]
variation measure associated to a real- or vector-valued measure~$\mu$;

\item[{$\mu\ll\lambda$}]
the measure $\mu$ is absolutely continuous \wrt the measure
$\lambda$;

\item[{$\mu_a$, $\mu_s$}]
absolutely continuous and singular part of 
a measure $\mu$ \wrt a given measure~$\lambda$.

\item[{$\Leb^n, \ \Haus^d$}]
Lebesgue measure on $\R^n$ and $d$-dimensional Hausdorff measure;

\item[{$\I^d_t$}]
$d$-dimensional integral geometric measures (\S\ref{def:intgeo});

\item[$I(n,k)$]
set of all multi-indices $\bfi:=(i_1,\dots,i_k)$ 
with $1\le i_1<\dots<i_k\le n$;

\item[$\esterno_k(V)$]
space of $k$-vectors in a linear space $V$;  
the canonical basis of $\esterno_k(\R^n)$ is formed by the 
simple $k$-vectors $e_\bfi:=e_{i_1}\wedge\dots\wedge e_{i_k}$ 
with $\bfi\in I(n,k)$, where $\{e_1,\dots,e_n\}$
is the canonical basis of $\R^n$;
$\esterno_k(\R^n)$ is endowed with the Euclidean
norm $|\cdot|$ associated to this basis;%
\footnoteb{None of the results in this paper
depend on this choice of norm.}

\item[$\esterno^k(V)$]
space of $k$-covectors on a linear space $V$; 
the canonical basis of $\esterno^k(\R^n)$ is formed by the 
simple $k$-covectors 
$\de x_\bfi:=\de x_{i_1}\wedge\dots\wedge \de x_{i_k}$ 
with $\bfi\in I(n,k)$, where $\{\de x_1,\dots, \de x_n\}$
is the canonical basis of the dual of $\R^n$;
$\esterno^k(\R^n)$ is endowed with Euclidean norm $|\cdot|$
associated to this basis;

\item[$\de x$]
$:= \de x_1\wedge\dots\wedge \de x_n$;

\item[$\wedge$]
exterior product of $k$-vectors, or of $h$-covectors;

\item[$\antitrace \, , \, \trace$]
interior products of a $k$-vector and a $h$-covector
(\S\ref{def:int_prod}); 

\item[$\hodge$]
Hodge-type operator on $k$-vectors and $k$-covectors (\S\ref{def:hodge});

\item[$\Span(v)$]
span of a $k$-vector $v$ (\S\ref{def:span});

\item[$\de$]
exterior derivative of a $k$-form (\S\ref{def:diffop}); 

\item[$\dive$]
divergence of a $k$-vectorfield (\S\ref{def:diffop});

\item[{$[v,v']$}]
Lie bracket of vectorfields $v$ and $v'$ (\S\ref{def:lie});

\item[$W(\mu,\cdot)$]
decomposability bundle of a measure $\mu$ (\S\ref{def:decbun});

\item[{$N(V)$}]
non-involutivity set of a distribution of $k$-planes $V$ (\S\ref{def:inv});

\item[{$\Vhat$}]
and $N(V,d)$, see~\S\ref{def:vhat}.
\end{itemize}

\begin{parag}[Integral geometric measure]
\label{def:intgeo}
Given $d=1, \dots, n$ and $t \in[1,+\infty]$, 
we denote by $\I^d_t$ the $d$-dimensional integral geometric 
measure of exponent $t$ on $\R^n$
(the precise definition can be found in \cite{Federer}, \S2.10.5, 
or \cite{Kra-Par}, \S2.1.4).

The relevant features are that 
$\I^d_t$ is invariant under isometries of $\R^n$, it agrees with 
the Hausdorff measure $\Haus^d$ on regular 
$d$-dimensional surfaces of $\R^n$, and in general 
satisfies $\I^d_t \le \Haus^d$.
Moreover, and this is essential to this paper, a Borel 
set $E$ is $\I^d_t$-null if and only if
\[
\Haus^d \big( p_V(E) \big) = 0 
\quad
\text{for a.e.~$d$-plane $V$ in $\R^n$,}
\]
where $p_V$ stands for the orthogonal projection 
on $V$ and ``a.e.'' refers to the Haar measure on the 
Grassmannian of $d$-planes in $\R^n$.

Note that the class of $\I^d_t$-null Borel sets is the
same for all $t$ and is strictly larger than the class of
$\Haus^d$-null sets (indeed, by the Besicovitch-Federer 
projection theorem, the first class contains all sets which
are $\Haus^d$-finite and purely $d$-unrectifiable).

In particular the fact that a measure $\mu$ is absolutely continuous
\wrt~$\I^d_t$ does not depend on the exponent $t$
and implies that $\mu$ is absolutely continuous \wrt~$\Haus^d$
(but the converse does not hold).
\end{parag}

\sottosezione*{Multilinear algebra}
In this subsection we review the basic notions
of multilinear algebra; we consider multivectors and multicovectors 
in a general linear space~$V$. For a thorough treatise 
of this topic, we refer the reader to \cite{Federer}, \S1.5, 
from which we borrow the notation, or standard textbooks in 
Differential Geometry, such as \cite{Lee}, Chapters~11 and~14, 
and \cite{War}, Chapter~2.

\begin{parag}[Interior product]
\label{def:int_prod}
Given a $k$-vector $v$ in $V$ and an $h$-covector 
$\alpha$ on $V$ with $h\le k$, the \emph{interior product} 
$v\trace\alpha$ is the $(k-h)$-vector in $V$ defined by
\[
\scalar{v\trace\alpha}{\beta}:=\scalar{v}{\alpha\wedge\beta}
\quad\text{for every $(k-h)$-covector $\beta$;}
\]
if $k\le h$, the \emph{interior product} $v\antitrace\alpha$ 
is the $(h-k)$-covector defined by
\[
\scalar{w}{v\antitrace\alpha}:=\scalar{w\wedge v}{\alpha}
\quad\text{for every $(h-k)$-vector $w$.}
\]

Note that given a $k$-vector $v$, an $h$-covector $\alpha$ 
and an $h'$-covector $\alpha'$ with $h+h'\le k$, then
\[
v\trace(\alpha\wedge\alpha')=(v\trace\alpha)\trace\alpha'
\, .
\]
Similarly, given a $k$-vector $v$, a $k'$-vector $v'$ and an $h$-covector $\alpha$
with $k+k'\le h$, then
\[
(v\wedge v')\antitrace \alpha = v\antitrace(v'\antitrace\alpha)
\, .
\]
\end{parag}

\begin{parag}[Span of a $\boldsymbol k$-vector]
\label{def:span}
Given a $k$-vector $v$ in $V$, 
we denote by $\Span(v)$ the smallest of all linear subspaces 
$W$ of $V$ such that $v$ belongs to $\esterno_k(W)$.%
\footnoteb{For example, given linearly independent vectors $v_1,\dots,v_4$, 
then $\Span(v_1\wedge v_2 + v_3\wedge v_4)$ is the 
linear subspace spanned by $v_1,\dots,v_4$.}
This definition is well-posed
because every $k$-vector in $W$ is canonically identified
with a $k$-vector in $V$ via the inclusion map $i:W\to V$, 
and assuming this identification we have
$\esterno_k(W) \cap \esterno_k(W') = \esterno_k(W\cap W')$
for every $W,W'$ subspaces of $V$.
We have the following properties (see~\cite{Alb-Mar}, Proposition~5.9):
\begin{enumeratethm}
\item
if $v=0$ then $\Span(v)=\{0\}$;
\item
if $v\ne 0$ then $\dim(\Span(v)) \ge k$;
\item
if $v$ is simple and non-trivial, 
that is, $v=v_1\wedge\dots\wedge v_k$ with 
$v_1,\dots,v_k$ linearly independent vectors in $V$, 
then $\Span(v)$ is the subspace of $V$ spanned by 
$v_1,\dots, v_k$ and $\dim(\Span(v))=k$;
\item
\label{def:span-iv}
conversely, if $\dim(\Span(v))=k$ then 
$v$ is simple and non-trivial;
\item
\label{def:span-v}
$\Span(v)$ consists of all vectors of the form 
$v\trace\alpha$ with $\alpha\in\esterno^{k-1}(V)$.
\end{enumeratethm}
\end{parag}

The next lemma will be used later in the proofs.

\begin{lemma}
\label{lem:span0}
Let $v$ be a $k$-vector in $V$ and let $W$ be a $d$-dimensional 
subspace of~$V$.
Then $\Span(v)\subset W$ if any of the following conditions hold: 
\begin{enumeratethm}[label=\rm(\alph*)]
\item
\label{lem:span0-1}
there exist a subspace $W'$ of $W$ with $d':=\dim(W')\ge k$,
and an integer $h$ with $1 \le h\le d'-k+1$ such that 
$\Span(v\wedge w)\subset W$ for every $w\in\esterno^h(W')$;
\item
\label{lem:span0-2}
$v\wedge w=0$ for every $w\in\esterno^{d-k+1}(W)$;
\item
\label{lem:span0-3}
$k=1$ and there exist an integer $1\le h\le d$ and a simple $h$-vector $w\in\esterno^h(W)$
with $w\ne 0$ such that $\Span(v\wedge w) \subset W$;
\end{enumeratethm}
\end{lemma}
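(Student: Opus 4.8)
The plan is to reduce all three assertions to two elementary facts about the exterior algebra of $V$. The first is just a reformulation of property \ref{def:span-v}: for a $j$-vector $u$ in $V$ and a subspace $U$ with annihilator $U^{0}\subseteq V^{*}$, one has $\Span(u)\subseteq U$ if and only if $u\trace\psi=0$ for every $\psi\in U^{0}$. Indeed $\Span(u)$ is spanned by the vectors $u\trace\alpha$ with $\alpha\in\esterno^{j-1}(V)$; each such vector lies in $U$ exactly when $\scalar{u\trace\alpha}{\psi}=\pm\scalar{u\trace\psi}{\alpha}$ vanishes for all $\psi\in U^{0}$, and this holds for every $\alpha$ precisely when $u\trace\psi=0$. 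The second fact is a dimension count: if $\eta$ is a nonzero $q$-vector and $W'$ is a subspace with $\dim W'\ge q+h$, then there is a simple $h$-vector $w\in\esterno_{h}(W')$ with $\eta\wedge w\ne 0$. This follows by induction on $h$ from the case $h=1$, which in turn amounts to the remark that for $\eta\ne 0$ the subspace $\{x\in V:\,x\wedge\eta=0\}$ has dimension at most $q$ (extend a basis of it to a basis of $V$; the relations $x_{i}\wedge\eta=0$ force every basis $q$-vector appearing in $\eta$ to contain all the indices of the $x_{i}$, so there can be at most $q$ of them). Equivalently: if $\xi$ is a $(k-1)$-vector with $\xi\wedge w=0$ for every $w\in\esterno_{h}(W')$ and $\dim W'\ge(k-1)+h$, then $\xi=0$.

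For \ref{lem:span0-1} the plan is to check, by means of the first fact, that $v\trace\phi=0$ for every $\phi\in W^{0}$, which then gives $\Span(v)\subseteq W$. So fix $\phi\in W^{0}$. Since $W'\subseteq W$ we have $\phi\in(W')^{0}$, hence the first fact applied to $w$ and $W'$ gives $w\trace\phi=0$ for every $w\in\esterno_{h}(W')$. By the standard Leibniz (antiderivation) rule for contraction by a covector, $(v\wedge w)\trace\phi=(v\trace\phi)\wedge w\pm v\wedge(w\trace\phi)=(v\trace\phi)\wedge w$, while the hypothesis $\Span(v\wedge w)\subseteq W$ together with the first fact gives $(v\wedge w)\trace\phi=0$. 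Thus $(v\trace\phi)\wedge w=0$ for every $w\in\esterno_{h}(W')$; since the assumption $h\le d'-k+1$ says precisely that $\dim W'=d'\ge(k-1)+h$, the second fact forces $v\trace\phi=0$, as wanted.

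Assertion \ref{lem:span0-2} should then follow as the particular case of \ref{lem:span0-1} with $W'=W$ and $h=d-k+1$ (assuming $d\ge k$; the remaining cases are degenerate), since there $v\wedge w=0$ trivially yields $\Span(v\wedge w)=\{0\}\subseteq W$. Assertion \ref{lem:span0-3} I would handle directly and separately: here $v$ is a vector, so $\Span(v)$ is either $\{0\}$ or the line $\R v$. If $v\wedge w=0$ then, $w=w_{1}\wedge\dots\wedge w_{h}$ being simple and nonzero, $v$ is a linear combination of $w_{1},\dots,w_{h}$, hence $v\in W$; and if $v\wedge w\ne 0$ then $v\wedge w$ is a nonzero simple $(h+1)$-vector with $\Span(v\wedge w)=\Span\{v,w_{1},\dots,w_{h}\}\ni v$, so $v\in W$ by hypothesis. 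In either case $\Span(v)\subseteq W$.

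The real work sits in the second fact — equivalently, in the bound on $\dim\{x:\,x\wedge\eta=0\}$ and its iterated form — since that is where the arithmetic hypothesis $h\le d'-k+1$ gets used, and it is the step I expect to require the most care. Once it and the span--annihilator reformulation are in hand, \ref{lem:span0-1} becomes an essentially formal interior-product computation, and \ref{lem:span0-2} and \ref{lem:span0-3} are immediate.
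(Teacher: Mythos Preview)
Your argument is correct, but it follows a genuinely different route from the paper's. The paper proves \ref{lem:span0-1} by contradiction in coordinates: choosing a basis adapted to $W'\subset W\subset V$, it picks a multi-index $\bfj$ with $v_\bfj\ne 0$ and $j_k>d$ witnessing $\Span(v)\not\subset W$, then exhibits an explicit simple $h$-vector $w=e_{\bfj'}$ (with $\bfj'\subset\{1,\dots,d'\}$ disjoint from $\bfj$, which is possible precisely because $h\le d'-k+1$) such that $(v\wedge w)_{\bfj\cup\bfj'}=\pm v_\bfj\ne 0$, contradicting $\Span(v\wedge w)\subset W$. Parts \ref{lem:span0-2} and \ref{lem:span0-3} are then both reduced to \ref{lem:span0-1}, the latter by taking $W':=\Span(w)$.

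Your approach is instead direct and coordinate-free: you recast $\Span(u)\subset U$ as vanishing of $u\trace\psi$ for $\psi\in U^0$, use the antiderivation rule to turn the hypothesis into $(v\trace\phi)\wedge w=0$ for all $w\in\esterno_h(W')$, and conclude via the separately-proved bound $\dim\{x:\,x\wedge\eta=0\}\le q$. This isolates the combinatorial content (your ``second fact'') as a reusable lemma and makes transparent why the inequality $h\le d'-k+1$ enters, at the cost of invoking the Leibniz identity for $\trace$ by a $1$-covector, which the paper does not state explicitly. Your direct treatment of \ref{lem:span0-3} via simplicity of $v\wedge w$ is cleaner than the paper's reduction. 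Both proofs are of comparable length; yours is arguably more conceptual, the paper's more self-contained.
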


\begin{proof}
The proof is divided in three steps, one for each condition.

\passo{Step~1: if condition~\ref{lem:span0-1} holds then $\Span(v)\subset W$.}
We argue by contradiction, and prove that
if $\Span(v) \not\subset W$ then there exists 
$w\in\esterno^h(W')$ such that $\Span(v\wedge w)\not\subset W$.
To this aim we choose vectors $e_1,\dots,e_n$ in $V$ so that 
$e_1, \dots, e_{d'}$ form a basis of~$W'$,
$e_1,\dots, e_d$ form a basis of~$W$, 
and $e_1,\dots,e_n$ form a basis of~$V$. 
Then we write $v$ as
\[
v=\sum_{\bfi\in I(n,k)} \hskip -5 pt v_\bfi \, e_\bfi
\, .
\]
Since $\Span(v)$ is not contained in $W$ there exists a
multi-index $\bfj=(j_1, \dots, j_k)$ in $I(n,k)$ such that 
$v_\bfj\ne 0$ and $j_k>d$.
This means that at most $k-1$ indices in $\bfj$
belong to $\{1, \dots, d'\}$; thus there are at least 
$d'-k+1$ indices in $\{1, \dots, d'\}$ that are not in $\bfj$,
and since $h\le d'-k+1$ we can find 
a multi-index $\bfj'\in I(d',h)$ whose indices are all different 
from those of $\bf j$ 
(with a slight abuse of notation we write
$\bfj'\cap\bfj=\varnothing$).

We now set $w:=\hat e_{\bfj'}$. Then 
\[
v\wedge w =
\hskip - 5 pt \sum_{\bfi \colon \bfj'\cap\bfi=\varnothing} \hskip -10 pt
v_\bfi \, \hat e_\bfi \wedge \hat e_{\bfj'}
\, .
\]
Let $\bfj\cup\bfj'$ denote the multi-index in $I(n,k+h)$ that contains
the indices in~$\bfj$ and in~$\bfj'$. The formula above 
shows that the coordinate $(v\wedge w)_{\bfj\cup\bfj'}$ is 
equal to $\pm v_\bfj$ and in particular it does not vanish;
since $\bfj\cup\bfj'$ contains $j_k$ and $j_k>d$,
we deduce that $v\wedge w$ is not a $(k+h)$-vector in $W$, 
that is, $\Span(v\wedge w)\not\subset W$, as claimed.

\passo{Step~2: condition~\ref{lem:span0-2} implies condition~\ref{lem:span0-1}.}
More precisely, condition~\ref{lem:span0-1} holds with
$W':=W$ and $h:=d'-k+1=d-k+1$.

\passo{Step~3: condition~\ref{lem:span0-3} implies condition~\ref{lem:span0-1}.}
More precisely, condition~\ref{lem:span0-1} holds with
$W':=\Span(w)$: notice indeed that since $w$ is simple then 
$d':=\dim(W')=h$, and therefore the $h$-vectors in $\esterno^h(W')$
are just multiples of $w$.
\end{proof}

\begin{parag}[A Hodge-type operator]
\label{def:hodge}
We consider the operator $\hodge$ that acts on all vectors 
and covectors of $\R^n$, and more precisely maps $k$-vectors 
into $(n-k)$-covectors and vice versa, 
and is defined by the following property: for every 
$v \in \esterno_k(\R^n)$ and every $\alpha \in \esterno^k(\R^n)$ 
there holds
\[
\hodge v := v\antitrace \de x
\, , \quad
\hodge\alpha := e\trace \alpha
\, , 
\]
where $\de x:=\de x_1\wedge\dots\wedge\de x_n$
and $e:=e_1\wedge\dots\wedge e_n$.%
\footnoteb{This operator is similar to the standard Hodge star operator
but not exactly the same; it is defined in \cite{Federer}, \S1.5.2, 
but denoted there with a different symbol.}

Note that the definition of the interior products 
(see~\S\ref{def:int_prod}) yields
\[
\scalar{w}{\hodge v}= \scalar{w\wedge v}{\de x}
\, , \quad
\scalar{\hodge\alpha}{\beta}= \scalar{e}{\alpha\wedge\beta}
\, , 
\]
for every $(n-k)$-vector $w$ and every $(n-k)$-covector $\beta$.

Moreover for every $\bfi\in I(n,k)$ one has
\begin{equation}
\label{hodge_ei}
\hodge{e_\bfi} = \sign(\bfj,\bfi) \, \de x_{\bfj}
\, , \quad
\hodge{\de x_\bfi} = \sign(\bfi,\bfj) \, e_{\bfj}
\, , 
\end{equation}
where $\bfj$ is the multi-index in $I(n, n-k)$ 
consisting of all indices which are not in $\bfi$, 
and $\sign(\bfj, \bfi)$ is the sign of the permutation 
that reorders the sequence of indices 
$j_1, \dots, j_{n-k},i_1, \dots, i_k$.
The identities in \eqref{hodge_ei} show that
$\hodge$ is an involution, that is, 
$\hodge(\hodge v)=v$ and $\hodge(\hodge\alpha)=\alpha$.

Among the many identities relating $\hodge$ and the various
products, we will use the following one: for every 
$k$-vector $v$ and every $h$-covector $\alpha$ with $h\le k$ 
one has
\begin{equation}
\label{hodge_prod}
\hodge(v\trace\alpha)=(\hodge v)\wedge\alpha
\, .
\end{equation}
\end{parag}

\sottosezione*{Forms, vectorfields, currents}
Here we review the basic definitions and results concerning 
differential forms, vectorfields and currents.
These objects will be defined on a general open 
set $\Omega$ in $\R^n$, $n\ge 2$. 

Elementary introductions to 
the theory of currents can be found for instance in \cite{Kra-Par}, 
\cite{Simon}; the most complete reference remains~\cite{Federer}.

\begin{parag}[Forms and vectorfields]
A $k$-form is a map $\omega:\Omega\to\esterno^k(\R^n)$, 
and we sometime write it in terms of the canonical basis of
$\esterno^k(\R^n)$:
\[
\omega(x) = \sum_{\bfi\in I(n,k)} \omega_\bfi(x) \, \de x_\bfi
\, .
\]
Similarly, a $k$-vectorfield is a map 
$v:\Omega\to\esterno_k(\R^n)$, and we sometime write it as
\[
v(x) = \sum_{\bfi\in I(n,k)} v_\bfi(x) \, e_\bfi
\, .
\]
\end{parag}

\begin{parag}[Exterior derivative and divergence]
\label{def:diffop}
If $\omega$ is a $k$-form of class $C^1$, 
the \emph{exterior derivative} $\de\omega$ is the $(k+1)$-form 
defined in coordinates by the usual formula:
\begin{equation}
\label{e:diff}
\de\omega(x)
:=\sum_{\bfi\in I(n,k)} \sum_{j=1}^n
   \frac{\bd\omega_\bfi}{\bd x_j}(x) \, \de x_j\wedge\de x_\bfi
= \sum_{j=1}^n \de x_j \wedge \frac{\bd\omega}{\bd x_j}(x)
\, .
\end{equation}

If $v$ is a $k$-vectorfield of class $C^1$,
the \emph{divergence} $\dive v$ is the $(k-1)$-vectorfield
defined by
\begin{equation}
\label{e:div}
\dive v(x)
:=\sum_{\bfi\in I(n,k)}\sum_{j=1}^n 
      \frac{\bd v_\bfi}{\bd x_j}(x)\, e_\bfi \trace\de x_j
=\sum_{j=1}^n \frac{\bd v}{\bd x_j}(x)\trace\de x_j
\,.
\end{equation}

Definition \eqref{e:div} is not as standard as \eqref{e:diff}: 
we refer to \cite{Federer}, \S4.1.6, for the abstract characterization 
of the divergence operator and for the following identity, 
which relates divergence and exterior derivative (it can also be 
proved using \eqref{hodge_ei} and \eqref{hodge_prod}):
\begin{equation}
\label{e:div2}
\dive v:=(-1)^{n-k} \hodge(\de(\hodge v))
\,.
\end{equation}
For $k=1$, formula \eqref{e:div} reduces to the usual 
definition of divergence of a vectorfield 
(recall that $e_i\trace\de x_j=\scalar{e_i}{\de x_j}=\delta_{ij}$).
\end{parag}

\begin{parag}[Leibniz rules]
\label{leibniz}
The exterior derivative satisfies a Leibniz rule
\wrt the exterior product:
given a $k$-form $\omega$ and  a $k'$-form $\omega'$ on $\Omega$, 
both of class $C^1$, one has 
\begin{equation}
\label{e:leibniz}
\de\left(\omega\wedge\omega'\right)
=\de\omega\wedge\omega' + (-1)^k\, \omega\wedge\de\omega'
\, .
\end{equation}

The divergence satisfies a Leibniz rule 
\wrt the interior product: given a $k$-vector $v$ and  
an $h$-form $\omega$ on $\Omega$, both of class $C^1$ and with 
$h\le k$, one has
\begin{equation}
\label{e:leibniz-div}
\dive(v\trace\omega)
= (-1)^h \left((\dive v)\trace\omega + v\trace\de\omega\right)
\, ,
\end{equation}
which follows from \eqref{e:leibniz} using \eqref{hodge_prod}
and~\eqref{e:div2}.%
\footnoteb{Formulas relating divergence and exterior 
product (or exterior derivative and interior product) 
are more complicated, see~\S\ref{def:lie}.}
\end{parag}

\begin{parag}[Lie bracket and Cartan's formula]
\label{def:lie}
Given two vectorfields $v$, $v'$ on $\Omega$ of class $C^1$, 
the Lie bracket $[v,v']$ is the vectorfield on $\Omega$ 
defined by
\[
[v,v'](x)
:= \frac{\bd v}{\bd v'}(x) - \frac{\bd v'}{\bd v}(x)
= \de_x v \, (v'(x)) - \de_x v' \, (v(x))
\, , 
\]
where $\de_x v$ and $\de_x v'$ stand for the differentials of $v$ and $v'$
at the point $x$, viewed as linear maps from $\R^n$ into itself.

Consider now a \emph{simple} $k$-vectorfield $v=v_1\wedge\dots\wedge v_k$ 
with $k\ge 2$ where each $v_i$ is a vectorfield of class $C^1$ on $\Omega$. 
Then the divergence of $v$ can be computed using the following version 
of Cartan's formula:
\begin{equation}
\label{e:cartan1}
\begin{aligned}
    \dive v
& = \sum_{i=1}^k (-1)^{i-1} \dive v_i \, \Big( \bigwedge_{j\neq i}v_j \Big) \\
& \qquad + \sum_{1\le i<i'\le k} (-1)^{i+i'-1} [v_i, v_{i'}]
       \wedge \Big( \bigwedge_{j\neq i, i'} v_j \Big)
\,.
\end{aligned}
\end{equation}
In particular for $k=2$ we have
\begin{equation}
\label{e:cartan2}
\dive (v_1\wedge v_2)
=(\dive v_1) \, v_2 - (\dive v_2) \, v_1 + [v_1,v_2]
\,.
\end{equation}
Formula~\eqref{e:cartan1} can be found, written in a dual 
form, in \cite{War}, Proposition~2.25(f); we recover the form
above using identity~\eqref{e:div2}.
\end{parag}

\begin{parag}[Currents]
\label{def:curr}
A $k$-dimensional current (or simply $k$-current)
$T$ on the open set $\Omega$ in $\R^n$ is a continuous 
linear functional on the space of smooth $k$-forms 
with compact support in~$\Omega$. 
The \emph{boundary} of $T$ is the $(k-1)$-current 
$\bd T$ on $\Omega$ defined by 
$\scalar{\bd T}{\omega} := \scalar{T}{d\omega}$
for every smooth $(k-1)$-form $\omega$ with compact support.
The \emph{mass} of $T$, denoted by 
$\Mass(T)$, is the supremum of $\scalar{T}{\omega}$ over
all $k$-forms $\omega$ such that $|\omega(x)|\le 1$ for every~$x\in\Omega$.

By Riesz theorem, the fact that $T$ has finite mass 
is equivalent to saying that $T$ can be represented 
as a finite measure on $\Omega$ with values 
in the space $\esterno_k(\R^n)$, that is,
$T=\tau\mu$ where $\mu$ is a finite positive measure
on $\Omega$ and $\tau$ is a Borel $k$-vectorfield 
in~$L^1(\mu)$.
Thus
\[
\scalar{T}{\omega} 
= \int_{\Omega} \scalar{\tau(x)}{\omega(x)} \, d\mu(x)
\]
for every admissible $k$-form $\omega$ on $\Omega$,   
and $\Mass(T)=\int_{\Omega} |\tau| \, d\mu$.

\smallskip
Finally, a $k$-current $T$ is said to be:
\begin{enumeratethm}[label=\rm(\alph*)]
\item
\emph{normal} if both $T$ 
and $\bd T$ have finite mass;
\item
\label{def:curr-b}
\emph{rectifiable}
if $T=\tau m \Haus^k$ where $m$ is a
function in $L^1(\Haus^k)$ such that the set $E:=\{x: \, m(x)\ne 0\}$
is $k$-rectifiable, and $\tau$ is a simple $k$-vectorfield
with $|\tau|=1$ which spans the approximate
tangent space to $E$ at $x$ for $\Haus^k$-a.e.~$x\in E$;
\item
\emph{rectifiable with integer multiplicity} if the 
multiplicity $m$ is integer-valued;
\item
\emph{integral} if $T$ is rectifiable
with integer multiplicity and $\bd T$ 
has finite mass (if this is the case
then also $\bd T$  is rectifiable
with integer multiplicity).
\end{enumeratethm}
\end{parag}

\begin{remarks}\label{rem:curr}
\begin{enumeraterem}[ref={\ref*{rem:curr}(\roman*)}]
\item
\label{rem:curr-i}
When we write a current $T$ in the form 
$T=\tau\mu$ we always assume that $T$ has finite mass, 
$\mu$ is a (locally) finite positive measure, $\tau$ belongs to $L^1(\mu)$ 
and $\tau(x)\ne 0$ for $\mu$-a.e.~$x$; in particular
$\supp(T)=\supp(\mu)$.

\item
\label{rem:curr-ii}
The representation $T=\tau\mu$ 
is not unique. However, given another representation 
$T=\hat\tau\hat\mu$ we have that $\hat\mu=\rho\mu$ and $\hat\tau=\tau/\rho$
for some strictly positive function $\rho$. 
In particular $\mu$ and $\hat\mu$ are absolutely 
continuous \wrt each other.

\item
\label{rem:curr-iii}
The boundary operator and the (distributional) 
divergence operator are related by the formula 
$\bd T=-\dive T$.
More precisely, given a current of the form $T=\tau\,\Leb^n$, 
then $T$ is a normal current if and only if the 
divergence of $\tau$ belongs to $L^1(\Leb^n)$, 
and in that case $\bd T=-\dive\tau\, \Leb^n$.

\item
Given a $k$-current with finite mass $T=\tau\mu$ 
and a continuous $h$-form $\omega$ with $h\le k$, 
the \emph{interior product} of $T$ and $\omega$ 
is the $(k-h)$-current defined by
\begin{equation}
\label{e:intprod-curr}
T\trace\omega:=(\tau\trace\omega)\mu
\, .
\end{equation}
If $T$ is normal and $\omega$ is of class $C^1$, 
then the definition of boundary and \eqref{e:leibniz} 
give the following Leibniz rule:
\begin{equation}
\label{e:leibinz-curr}
\bd(T\trace\omega) 
= (-1)^h \big[ (\bd T) \trace\omega - T \trace d\omega \big]
\, .
\end{equation}
\end{enumeraterem} 
\end{remarks}

\sottosezione*{Distributions of $\boldsymbol k$-planes}
In this subsection we consider a distribution 
of $k$-planes $V$  defined on the open set $\Omega$, 
recall the notion of involutivity of $V$, 
define the distribution $\Vhat$ and the sets $N(V)$ and $N(V,d)$, 
and give some characterizations that are not widely used.

\begin{parag}[Distributions of $\boldsymbol k$-planes]
\label{def:distrib}
Let $1\le k\le n$.
A \emph{distribution of $k$-planes} on the open set $\Omega$ 
in $\R^n$ is a map $V$ that to every $x\in\Omega$ associates 
a $k$-dimensional plane $V(x)$ in $\R^n$, that is, 
a map from $\Omega$ to the Grassmannian $\Gr(k,n)$.

We say that a simple $k$-vectorfield $v=v_1\wedge\dots\wedge v_k$ 
\emph{spans} $V$ if for every $x\in \Omega$ one has
\[
V(x) 
= \Span(v(x))
= \Span\big\{ v_1(x),\dots,v_k(x) \big\}
\, .
\] 
Note that a distribution $V$  
of class $C^r$, with $r=0,1,\dots,\infty$,
is \emph{locally} spanned by $v=v_1\wedge\dots\wedge v_k$,
where the vectorfields $v_i$ are of class $C^r$.

We say that an $h$-vectorfield $w$ on $\Omega$
is \emph{tangent} to $V$ if $\Span(w(x)) \subset V(x)$ for every~$x$
(simply $w(x)\in V(x)$ when $h=1$).

We say that an $h$-current with finite mass $T=\tau\mu$ 
on $\Omega$ is \emph{tangent} to $V$ if $\Span(\tau(x)) \subset V(x)$
for $\mu$-a.e.~$x$.
Note that this notion does not depend on the choice
of $\tau$ and $\mu$ (recall Remark~\ref{rem:curr-ii}).
\end{parag}

\begin{remarks}\label{rem:tancurr}
\begin{enumeraterem}[ref={\ref*{rem:tancurr}(\roman*)}]
\item
\label{rem:tancurr-i}
If $T$ is a rectifiable $k$-current and $E$ is the associated 
rectifiable set (as in \S\ref{def:curr}\ref{def:curr-b}), 
the fact that $T$ is tangent to $V$ means
that $V(x)$ contains the approximate tangent space $T_xE$
for $\Haus^h$-a.e.~$x\in E$.
\item
\label{rem:tancurr-ii}
If $h=k$ and $V$ is spanned by a simple $k$-vectorfield $v$, then a current
$T$ with finite mass is tangent to $V$ if and only if it can be written 
as $T=v\mu$ for some \emph{signed} measure~$\mu$.
\end{enumeraterem}
\end{remarks}

\begin{parag}[Involutivity of $\boldsymbol{V}$
and the set $\boldsymbol{N(V)}$]
\label{def:inv}
Let $V$ be a distribution of $k$-planes 
of class $C^1$ on the open set $\Omega$ in $\R^n$.

We say that $V$ is \emph{involutive at $x\in\Omega$} if
for every couple of vectorfields $w$, $w'$ of class $C^1$ which
are tangent to $V$ 
the commutator $[w, w'](x)$ belongs to $V(x)$.
We say that $V$ is involutive if it is involutive 
at every point of~$\Omega$.

The set of all points $x$ where $V$ is not involutive is called
the \emph{non-involutivity set} of $V$ and denoted by $N(V)$.
Note that this set is open. 
\end{parag}

\begin{remark}
The involutivity of a distribution $V$ is most often 
defined in terms of the commutators of a given family of 
vectorfields $v_1,\dots,v_k$ that span $V$; the definition above is equivalent 
(see Corollary~\ref{cor:inv}) and has the slight advantage 
of being independent of the choice of the vectorfields $v_i$.
\end{remark}

\begin{parag}[The distribution $\boldsymbol\Vhat$
and the sets $\boldsymbol{N(V,d)}$]
\label{def:vhat}
Given $V$ as in \S\ref{def:inv}, 
for every $x\in\Omega$ we denote by $\Vhat(x)$ the subspace of
$\R^n$ spanned by all vectors in $V(x)$ and by the commutator 
(evaluated at $x$) of every couple of vectorfields $w$, $w'$ 
of class $C^1$ which are tangent to $V$, that is,
\[
\Vhat := 
V + \Span\big\{ [w, w'] \colon\text{$w, w'$ are tangent to $V$}\big\}  
\, .
\]
For every $d=k, \dots, n$ we set
\[
N(V,d) := \big\{ x\in\Omega \colon \dim(\Vhat(x))=d \big\}
\, .
\]
Thus $N(V,k)$ is the set of all points where $V$ is involutive
and
\[
N(V)= \bigcup_{d=k+1}^n N(V,d)
\, .
\]
\end{parag}

\begin{proposition}
\label{prop:vhat}
Let $V$ and $\Vhat$ be as in \S\ref{def:vhat}, 
and assume that $V$ be spanned by $v=v_1\wedge\dots\wedge v_k$ 
where each $v_i$ is a vectorfield of class $C^1$ on $\Omega$.
We consider the following distributions of planes on $\Omega$:%
\begin{enumeratethm}
\item
$V_1 := \Span\big\{ \dive(w\wedge w') 
\colon\text{$w, w'$ are $C^1$-vectorfields tangent to $V$} \big\}$;
\item
$V_2 := \Span\big\{ [v_i, v_j] \colon 1\le i, j\le k \big\}$;
\item
$V_3 := \Span\big\{ \dive(v_i \wedge v_j) \colon 1\le i, j\le k \big\}$;
\item
$V_4 := \big\{ \dive w 
\colon \text{$w$ is a $2$-vectorfield of class $C^1$ tangent to $V$} \big\}$.
\end{enumeratethm}
Then
\begin{equation}
\begin{aligned}
  \Vhat 
  = V+V_1
& = V+V_2 = V+V_3 \\
& = V+V_4 
  = V +\Span(\dive v)
  \, .
\end{aligned}
\label{e:vhat}
\end{equation}
\end{proposition}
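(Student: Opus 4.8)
The plan is to prove the chain \eqref{e:vhat} by showing, for each of $V_1,V_3,V_4$ and $\Span(\dive v)$, that it sits between $V_2$ (up to adding $V$) and $\Vhat$, after first identifying $\Vhat$ with $V+V_2$. We may assume $k\ge 2$, since for $k=1$ every distribution in \eqref{e:vhat} reduces to $V$ (a line field is involutive, and $\dive v=\dive v_1$ is a scalar, so $\Span(\dive v)=\{0\}$). Throughout I would use: that a $C^1$ vectorfield (resp.\ $C^1$ $2$-vectorfield) tangent to $V$ is globally of the form $\sum_i a_iv_i$ (resp.\ $\sum_{i<j}c_{ij}\,v_i\wedge v_j$) with $C^1$ coefficients, because $v_1(x),\dots,v_k(x)$ are everywhere linearly independent and the coefficients are thereby uniquely determined; the elementary facts $\Span(u\trace\alpha)\subseteq\Span(u)$ and $\Span(u\wedge u')\subseteq\Span(u)+\Span(u')$; the Leibniz rule $\dive(fu)=f\dive u+u\trace\de f$ (the case $h=0$ of \eqref{e:leibniz-div}, or directly from \eqref{e:div}); and Cartan's formulas \eqref{e:cartan1}--\eqref{e:cartan2}.

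\emph{Step 1 ($\Vhat=V+V_2$).} One inclusion is immediate, since each $[v_i,v_j]$ is a commutator of $C^1$ vectorfields tangent to $V$. For the other, writing tangent $C^1$ vectorfields as $w=\sum_i a_iv_i$, $w'=\sum_j b_jv_j$ and expanding $[w,w']$ via bilinearity and the Leibniz rule of the Lie bracket in the coefficient functions, one obtains $\sum_{i,j}a_ib_j[v_i,v_j]$ plus terms that are pointwise linear combinations of the $v_l$, hence lie in $V$. Thus $[w,w'](x)\in V(x)+V_2(x)$ for every $x$, so $\Vhat\subseteq V+V_2$.

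\emph{Step 2 ($\Vhat=V+V_1=V+V_3=V+V_4$).} For each $\bullet\in\{1,3,4\}$ I would check $V_\bullet\subseteq\Vhat$ and $V_2\subseteq V+V_\bullet$; with Step~1 this forces $\Vhat=V+V_2\subseteq V+V_\bullet\subseteq V+\Vhat=\Vhat$, hence equality. The engine is \eqref{e:cartan2}, $\dive(a\wedge b)=(\dive a)\,b-(\dive b)\,a+[a,b]$, so $\dive(a\wedge b)(x)$ and $[a,b](x)$ differ by a vector of $V(x)$ for $C^1$ vectorfields $a,b$ tangent to $V$. This gives at once $V_1\subseteq\Vhat$ (the bracket term is in $\Vhat$ by definition), $V_3\subseteq V_1\subseteq\Vhat$, $V_2\subseteq V+V_1$ and $V_2\subseteq V+V_3$. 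For $V_4$: writing a tangent $C^1$ $2$-vectorfield as $w=\sum_{i<j}c_{ij}\,v_i\wedge v_j$ and using $\dive(fu)=f\dive u+u\trace\de f$ together with $\Span(u\trace\de f)\subseteq\Span(u)$ shows $\dive w(x)\in V(x)+V_3(x)\subseteq\Vhat(x)$, so $V_4\subseteq\Vhat$; and taking the $c_{ij}$ constant gives $\dive\bigl(\sum_{i<j}\lambda_{ij}v_i\wedge v_j\bigr)=\sum_{i<j}\lambda_{ij}\dive(v_i\wedge v_j)$, whence $V_3\subseteq V_4$ and so $V_2\subseteq V+V_3\subseteq V+V_4$.

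\emph{Step 3 ($\Vhat=V+\Span(\dive v)$).} Here \eqref{e:cartan1} is the main tool. For ``$\subseteq$'': each summand of $\dive v$ is either $(\dive v_i)\bigwedge_{j\ne i}v_j$, whose span lies in $V$, or $[v_i,v_{i'}]\wedge\bigwedge_{j\ne i,i'}v_j$, whose span lies in $\Span([v_i,v_{i'}])+V\subseteq\Vhat$; hence $\dive v(x)\in\esterno_{k-1}(\Vhat(x))$ and $\Span(\dive v(x))\subseteq\Vhat(x)$. For ``$\supseteq$'' it suffices, by Step~1, to show $[v_i,v_{i'}](x)\in V(x)+\Span(\dive v(x))$ for every $i<i'$ and every $x$. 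Fixing $x$ and linear coordinates with $v_l(x)=e_l$ and $V(x)=\Span\{e_1,\dots,e_k\}$, I would contract $\dive v(x)$ with the $(k-2)$-covector $\de x_{\mathbf{m}}$, where $\mathbf{m}:=(1,\dots,k)\setminus\{i,i'\}$; bookkeeping in \eqref{e:cartan1} shows that the terms $(\dive v_l)\bigwedge_{j\ne l}v_j$ and the $V(x)$-components of all bracket terms contribute only elements of $V(x)$, while the unique surviving transversal contribution is $\pm$ the component of $[v_i,v_{i'}](x)$ outside $V(x)$. Since $\dive v(x)\trace\de x_{\mathbf{m}}\in\Span(\dive v(x))$ by property~\ref{def:span-v}, this yields $[v_i,v_{i'}](x)\in V(x)+\Span(\dive v(x))$, hence $\Vhat=V+V_2\subseteq V+\Span(\dive v)\subseteq\Vhat$. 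The routine parts are Steps~1 and~2; I expect the real obstacle to be this last inclusion, namely extracting the transversal part of $[v_i,v_{i'}]$ from the single $(k-1)$-vector $\dive v$ by a well-chosen interior product, and verifying that exactly one term of \eqref{e:cartan1} survives the contraction transversally to $V(x)$.
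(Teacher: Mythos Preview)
Your proof is correct. Steps~1 and~2 are essentially the paper's argument, repackaged: both use Cartan's formula \eqref{e:cartan2} to identify brackets and divergences modulo $V$, and both expand a tangent $C^1$ $2$-vectorfield in the basis $v_i\wedge v_j$ to control $V_4$.

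The genuine difference is in the last equality $\Vhat=V+\Span(\dive v)$. You prove it by a direct coordinate contraction: at each $x$, choose a basis with $v_l(x)=e_l$ and hit the Cartan expansion \eqref{e:cartan1} of $\dive v(x)$ with $\de x_{\mathbf m}$, then track which terms survive transversally to $V(x)$. The paper instead routes this equality through $V_4$ using the Leibniz identity \eqref{e:leibniz-div} with a constant $(k-2)$-form $\alpha$, which gives $(\dive v)\trace\alpha=(-1)^k\dive(v\trace\alpha)$; this shows at once $\Span(\dive v)\subset V_4$, and the converse $V_4\subset V+\Span(\dive v)$ follows by writing any tangent $2$-vectorfield as $v\trace\omega$ and applying \eqref{e:leibniz-div} again. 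The paper's route is coordinate-free and yields a clean two-line argument once \eqref{e:leibniz-div} is in hand; your route is more explicit and makes the mechanism visible---one literally sees the transversal part of $[v_i,v_{i'}]$ emerge from $\dive v$---at the cost of a (routine but slightly delicate) index bookkeeping that you correctly identify as the one place requiring care.
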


\begin{corollary}
\label{cor:inv}
Let $V$ and $v=v_1\wedge\dots\wedge v_k$ be as in the previous statement.
Then the following assertions are equivalent at every given point
$x\in\Omega$:
\begin{enumeratethm}
\item
\label{cor:inv-i}
$V$ is involutive at $x$;
\item
\label{cor:inv-ii}
$[v_i, v_j] \in V$ or, equivalently, $\dive(v_i\wedge v_j)\in V$
for every $1\le i,j\le k$;
\item
\label{cor:inv-iii}
$\Span(\dive v) \subset V$;
\item
\label{cor:inv-iv}
$v \wedge ((\dive v) \trace\de x_\bfi) = 0$
for every $\bfi\in I(n,k-2)$.
\end{enumeratethm}
\end{corollary}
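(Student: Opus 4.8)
The plan is to read everything off Proposition~\ref{prop:vhat} together with the definition of involutivity at a point. By \S\ref{def:inv} and \S\ref{def:vhat}, $\Vhat(x)$ is $V(x)$ enlarged by all commutators $[w,w'](x)$ of $C^1$-vectorfields $w,w'$ tangent to $V$; since $\Vhat(x)\supset V(x)$ always, $V$ is involutive at $x$ if and only if $\Vhat(x)=V(x)$. Reading the chain of equalities \eqref{e:vhat} pointwise at $x$ therefore shows that \ref{cor:inv-i} is equivalent to each of: $[v_i,v_j](x)\in V(x)$ for all $1\le i,j\le k$ (from $\Vhat=V+V_2$); $\dive(v_i\wedge v_j)(x)\in V(x)$ for all $1\le i,j\le k$ (from $\Vhat=V+V_3$); and $\Span(\dive v(x))\subset V(x)$ (from $\Vhat=V+\Span(\dive v)$). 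This yields \ref{cor:inv-i}$\Leftrightarrow$\ref{cor:inv-ii}$\Leftrightarrow$\ref{cor:inv-iii}; the internal equivalence of the two conditions in \ref{cor:inv-ii} can alternatively be read off Cartan's formula~\eqref{e:cartan2}, since $(\dive v_i)\,v_j$ and $(\dive v_j)\,v_i$ always lie in $V$.

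It then remains to prove \ref{cor:inv-iii}$\Leftrightarrow$\ref{cor:inv-iv}, which I would do at the fixed point $x$. Since $v(x)=v_1(x)\wedge\dots\wedge v_k(x)$ with $v_1(x),\dots,v_k(x)$ linearly independent (they span the $k$-plane $V(x)$), for a vector $y\in\R^n$ one has $v(x)\wedge y=0$ if and only if $v_1(x),\dots,v_k(x),y$ are linearly dependent, i.e.\ if and only if $y\in V(x)$. On the other hand, property~\ref{def:span-v} of \S\ref{def:span} applied to the $(k-1)$-vector $\dive v(x)$ says that $\Span(\dive v(x))$ is exactly the set of vectors $(\dive v(x))\trace\alpha$ with $\alpha\in\esterno^{k-2}(\R^n)$. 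Combining the two, $\Span(\dive v(x))\subset V(x)$ holds if and only if $v(x)\wedge\big((\dive v(x))\trace\alpha\big)=0$ for every $(k-2)$-covector $\alpha$; by linearity of $\alpha\mapsto(\dive v(x))\trace\alpha$ and since $\{\de x_\bfi:\bfi\in I(n,k-2)\}$ is a basis of $\esterno^{k-2}(\R^n)$, this is in turn equivalent to~\ref{cor:inv-iv}. (When $k=1$ both \ref{cor:inv-iv} and the inclusion in \ref{cor:inv-iii} are vacuously true, consistently with the fact---already contained in the first step---that a $1$-dimensional distribution is always involutive; when $k=2$ the condition reduces to $v\wedge\dive v=0$.)

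I do not expect a genuine obstacle here: the substance of the statement is just a pointwise rereading of Proposition~\ref{prop:vhat}, whose proof carries the real weight (Cartan's formula and the structure of $\Vhat$). The only points requiring some care are those in \ref{cor:inv-iii}$\Leftrightarrow$\ref{cor:inv-iv}---using \S\ref{def:span}\ref{def:span-v} with the correct covector degree $k-2$, the characterization $v(x)\wedge y=0\Leftrightarrow y\in\Span(v(x))$ for the simple vector $v(x)$, and the bookkeeping for the small values of~$k$.
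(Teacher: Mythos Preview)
Your proof is correct and follows essentially the same approach as the paper: the equivalence of \ref{cor:inv-i}, \ref{cor:inv-ii}, \ref{cor:inv-iii} is read off Proposition~\ref{prop:vhat}, and \ref{cor:inv-iii}$\Leftrightarrow$\ref{cor:inv-iv} is obtained by combining \S\ref{def:span}\ref{def:span-v} with the fact that for the simple nontrivial $k$-vector $v(x)$ one has $v(x)\wedge y=0$ iff $y\in\Span(v(x))=V(x)$. The paper proves the two implications \ref{cor:inv-iii}$\Rightarrow$\ref{cor:inv-iv} and \ref{cor:inv-iv}$\Rightarrow$\ref{cor:inv-iii} separately rather than as a single biconditional, but the underlying argument is the same.
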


\begin{proof}[Proof of Proposition~\ref{prop:vhat}]
The proof of \eqref{e:vhat} is divided in several steps.

\passo{Step~1: $V+V_2=V+V_3$ and $V+V_1 = \Vhat$.}
These equalities follow from the inclusion
\[
\dive(w\wedge w') - [w, w'] \in V
\, , 
\]
which holds for every pair of $1$-vectorfields $w, w'$ tangent to $V$, 
and is a consequences of formula \eqref{e:cartan2}.

\passo{Step~2: $V_4 \subset V+V_3$.}
Every $2$-vectorfield $w$ of class $C^1$ tangent to $V$ can be written as
\[
w=\sum_{1\le i<j\le k} a_{ij} \, v_i\wedge v_j
\]
for suitable $C^1$-functions $a_{ij}$. 
By applying formula \eqref{e:leibniz-div} to the $2$-vectorfields $v_i \wedge v_j$ 
and the $0$-forms $a_{ij}$ we obtain
\[
\dive w = \sum_{1\le i<j\le k} 
a_{ij} \dive(v_i\wedge v_j) + (v_i\wedge v_j)\trace \de a_{ij}
\, ;
\]
this formula immediately implies the claim.

\passo{Step~3: proof the first four equalities in~\eqref{e:vhat}}:
\begin{align*}
\Vhat 
& = V+ V_1
  \quad\text{by Step~1} \\
& \subset V+V_4 
  \quad\text{because $V_1\subset V_4$} \\
& \subset V+V_3
  \quad\text{by Step~2} \allowdisplaybreaks[3] \\
& =V+V_2
  \quad\text{by Step~1} \\
& \subset\Vhat \hphantom{{}+V_1}
  \quad\text{because $V_2\subset \Vhat$.}
\end{align*}

The last equality in~\eqref{e:vhat} follows by the next two steps.

\passo{Step~4: $\Span(\dive v) \subset V_4$.}
Every vector in $\Span(\dive v)$ can be written 
as $(\dive v)\trace\alpha$ for some $(k-2)$-covector $\alpha$
(see~\S\ref{def:span}\ref{def:span-v}), and by applying 
formula \eqref{e:leibniz-div} to $v$ and to the constant 
form $\alpha$ we obtain
\[
(\dive v)\trace\alpha = (-1)^k \dive(v\trace\alpha)
\, ,
\]
and the right-hand side clearly belongs to $V_4$.

\passo{Step~5: $V_4\subset V+\Span(\dive v)$.}
Since $V$ is spanned by the \emph{simple} $k$-vectorfield $v$, 
every $2$-vectorfield $w$ tangent to $V$ can be written 
as $v\trace\omega$ for some $(k-2)$-form $\omega$; 
then formula \eqref{e:leibniz-div} 
implies that $\dive w=\dive(v\trace\omega)$ 
belongs to $V+\Span(\dive v)$.
\end{proof}

\begin{proof}[Proof of Corollary~\ref{cor:inv}]
The equivalence of \ref{cor:inv-i}, \ref{cor:inv-ii} 
and \ref{cor:inv-iii} follows immediately from 
Proposition~\ref{prop:vhat}.

Let us prove the implication 
\ref{cor:inv-iii}$\,\Rightarrow\,$\ref{cor:inv-iv}.
Assertion~\ref{cor:inv-iii} means that $\dive v$ is a $(k-1)$-vector in $V$. 
Thus $(\dive v)\trace \de x_\bfi$ is a $2$-vector in $V$ 
and $v\wedge ((\dive v)\trace \de x_\bfi)$ is a $(k+1)$-vector in $V$,
and it must vanish because $V$ has dimension~$k$. 

Finally, let us prove the implication 
\ref{cor:inv-iv}$\,\Rightarrow\,$\ref{cor:inv-iii}.
Every vector in $\Span(\dive v)$ can be written as 
$(\dive v)\trace\alpha$ for some $(k-2)$-covector $\alpha$
(see~\S\ref{def:span}\ref{def:span-v}).
Thus \ref{cor:inv-iv} implies that $v\wedge ((\dive v)\trace\alpha)=0$, 
which in turn implies that $(\dive v)\trace\alpha$ 
belongs to the span of $v$, which is $V$
(here we use that $v$ is simple and nontrivial).
\end{proof}

\begin{parag}[A weak notion of involutivity]
\label{def:weakinv}
Corollary~\ref{cor:inv}\ref{cor:inv-iv} shows that the 
involutivity of a distribution $V$ spanned by a 
$k$-vectorfield $v$ is characterized by the equation
\begin{equation}
\label{e:weakinv}
v \wedge ((\dive v) \trace\de x_\bfi) = 0
\quad\text{for every $\bfi\in I(n,k-2)$,}
\end{equation}
which for $k=2$ reduces to $v \wedge \dive v = 0$.

We point out that equation \eqref{e:weakinv} makes sense
even if $v$ is less regular than $C^1$.
More precisely, the right-hand side of \eqref{e:weakinv}
is a well-defined distribution if $v$ and $\dive v$ 
belong, locally, to function spaces which are 
in duality (and are closed under multiplication by functions
of class $C^\infty_c$) and therefore one can define involutivity
for such classes of vectorfields. 

For example, it suffices that $v$ be continuous 
and $\dive v$ be a locally finite measure, or that
$v$ belong to the Sobolev class $\smash{H^s_\loc}$ 
for some $s\ge 0$ and $\dive v\in\smash{H^{-s}_\loc}$.
In particular it suffices that $v\in\smash{H^{\scriptscriptstyle 1/2}_\loc}$
(in this case $\dive v\in\smash{H^{\scriptscriptstyle -1/2}_\loc}$ 
because the divergence is a first-order differential operator).
\end{parag}

\sottosezione*{Decomposability bundle and sparseness of a measure}
In this subsection we briefly recall the notion of decomposability bundle
of a measure $\mu$, and show that the dimension of this bundle 
gives a lower bound on the degree of sparseness of~$\mu$,
expressed in terms of absolute continuity \wrt 
integral geometric measures $\I^d_t$.

\begin{parag}[Decomposability bundle of a measure]
\label{def:decbun}
Here we briefly sketch the definition of the 
\emph{decomposability bundle} of a measure, introduced in 
\cite{Alb-Mar},~\S2.6.
Given a positive finite measure $\mu$ on the open set $\Omega$, 
we denote by $\F(\mu)$ the class of all families $\{F_t\colon t\in I\}$ 
parametrized by $I:=[0,1]$ such that:
\begin{itemize}
[leftmargin=25pt, itemsep=2pt]
\item
each $F_t$ is a $1$-dimensional rectifiable set in $\Omega$;
\item
the measure $\lambda:=\int_I (\Haus^1\trace F_t)\, dt$ 
satisfies $\lambda\ll\mu$.%
\footnoteb{Recall that $\lambda$ is given by 
$\lambda(E):=\int_I \Haus^1(F_t\cap E) \, dt$ for every Borel set 
$E\subset\Omega$, where $dt$ is the Lebesgue measure;
we implicitly require that this integral is well-defined and finite.}
\end{itemize}
The decomposability bundle of $\mu$ is a map that to every 
$x\in\Omega$ associates a (possibly trivial) linear subspace of $\R^n$, 
denoted in this paper by $W(\mu, x)$, which is uniquely determined 
up to $\mu$-null sets by the following properties:
\begin{enumeratethm}
\item
\label{def:decbun-i}
for every $\{F_t\}\in\F(\mu)$ there holds $T_xF_t\subset W(\mu, x)$ 
for $\Haus^1$-a.e.~$x\in F_t$ and a.e.~$t\in I$, 
where $T_xF_t$ is the approximate tangent line to the set $F_t$ at~$x$;
\item
\label{def:decbun-ii}
$W(\mu,\cdot)$ is $\mu$-minimal among all bundles $W(\cdot)$ that satisfy 
property \ref{def:decbun-i}, in the sense that $W(\mu, x) \subset W(x)$
for $\mu$-a.e.~$x$.
\end{enumeratethm}
\end{parag}

Besides some results already contained in \cite{Alb-Mar}, 
we will need the following statement, which is a consequence 
of a remarkable theorem by G.~De~Philippis and F.~Rindler~\cite{DP-Rin}.

\begin{proposition}
\label{prop:decomp}
Let $\mu$ and $W(\mu,\cdot)$ be as above, $d$ be an integer, 
and $E$ be a Borel set such that $\dim(W(\mu, x)) \ge d$ 
for $\mu$-a.e.~$x\in E$.
Then $\mu\trace E \ll \I^d_t \ll \Haus^d$.
\end{proposition}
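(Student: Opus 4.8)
The plan is to reduce the statement to the main theorem of De~Philippis and Rindler \cite{DP-Rin}, whose content can be phrased as follows: if $\nu$ is a (vector-valued) measure on $\Omega$ that is \emph{$\mathscr{A}$-free} for some homogeneous constant-coefficient differential operator $\mathscr{A}$, then the polar vector $\mathrm{d}\nu/\mathrm{d}|\nu|$ lies, $|\nu|$-a.e., in the wave cone of $\mathscr{A}$. The bridge between this and the decomposability bundle is already available in \cite{Alb-Mar}: the key fact is that $W(\mu,\cdot)$ admits a characterization in terms of the measures $\mu$ can ``carry'' along rectifiable curves, and that this in turn is dual to a statement about which normal $1$-currents (equivalently, which divergence-free vector measures absolutely continuous \wrt~$\mu$) can be supported on $\mu$. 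So the first step is to recall, from \cite{Alb-Mar}, the precise equivalence between the decomposability bundle and the \emph{De~Philippis--Rindler bundle} (or whichever intermediate object \cite{Alb-Mar} isolates): namely, for $\mu$-a.e.~$x$, $W(\mu,x)$ is the smallest subspace $W$ such that whenever $T=\tau\mu$ is a normal $1$-current then $\tau(x)\in W$ for $\mu$-a.e.~$x$. Granting this, $\dim(W(\mu,x))\ge d$ on $E$ means, roughly, that $\mu\trace E$ supports ``at least $d$ independent directions'' of normal $1$-currents.

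Next I would invoke the projection/slicing characterization of $\I^d_t$-null sets recalled in \S\ref{def:intgeo}: a Borel set $A$ is $\I^d_t$-null iff $\Haus^d(p_W(A))=0$ for a.e.~$d$-plane $W$. Equivalently, a finite measure $\sigma$ satisfies $\sigma\ll\I^d_t$ provided that for a.e.~$d$-plane $W$ the pushforward $(p_W)_\#\sigma$ is absolutely continuous \wrt~$\Haus^d=\Leb^d$ on $W$. So the second step is to show that, for a.e.~$d$-dimensional coordinate (or affine) subspace $W$, the projection $(p_W)_\#(\mu\trace E)$ is absolutely continuous \wrt~Lebesgue measure on $W$. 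This is where the dimension hypothesis on $W(\mu,\cdot)$ enters: since $\dim W(\mu,x)\ge d$ on $E$, for a.e.~$d$-plane $W$ the restricted projection $p_W\colon W(\mu,x)\to W$ is surjective for $\mu$-a.e.~$x\in E$ (genericity of $W$ kills the bad measure-zero set of planes), and then the abstract structure theorem forces $(p_W)_\#(\mu\trace E)$ to be nonsingular: morally, $\mu$ can be disintegrated into pieces living on rectifiable curves whose tangents span $W(\mu,x)$, and projecting such a piece onto a $d$-plane transverse to it yields an absolutely continuous measure by the Besicovitch–Federer-type projection/rectifiability machinery together with De~Philippis–Rindler.

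The final step is bookkeeping: the inclusion $\I^d_t\ll\Haus^d$ is already recorded in \S\ref{def:intgeo}, so once $\mu\trace E\ll\I^d_t$ is established the second half of the conclusion is automatic. I expect the main obstacle to be the precise passage in the second step — turning the qualitative statement ``$\dim W(\mu,x)\ge d$'' into the quantitative absolute continuity of all generic $d$-plane projections of $\mu\trace E$. The cleanest route is probably not to redo the projection estimates by hand but to quote the refinement of \cite{DP-Rin} (or its packaging in \cite{Alb-Mar}) that directly says: if every normal $1$-current representable over $\mu$ has polar in a subspace of dimension $\ge d$ at $\mu$-a.e.~point of $E$, then $\mu\trace E\ll\I^d_t$. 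If that exact statement is not literally in \cite{Alb-Mar}, the work is in assembling it from (a) the duality between $W(\mu,\cdot)$ and normal $1$-currents, (b) the De~Philippis–Rindler wave-cone conclusion applied to the divergence operator, and (c) the integralgeometric characterization of $\I^d_t$; each ingredient is standard but their combination is the heart of the proof.
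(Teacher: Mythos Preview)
Your approach is essentially the same as the paper's: reduce via the projection characterization of $\I^d_t$ to showing $(p_V)_\#(\mu\trace E)\ll\Leb^d$ for a.e.\ $d$-plane $V$, use genericity so that $p_V$ restricted to $W(\mu,x)$ is surjective onto $V$ for $\mu$-a.e.\ $x\in E$, and then invoke \cite{DP-Rin} on the projected measure. The one ingredient you leave implicit, and which the paper isolates as a separate lemma, is the pushforward inequality $p_V\big(W(\mu,x)\big)\subset W\big((p_V)_\#\mu,\,p_V(x)\big)$ for $\mu$-a.e.\ $x$ (proved exactly as you suggest, by pushing forward the families of rectifiable curves that define $W(\mu,\cdot)$); granting this, the projected measure has full decomposability bundle on $V\simeq\R^d$, and \cite{DP-Rin} gives the absolute continuity directly.
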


For the proof we need the following two lemmas.

\begin{lemma}
\label{lem:decomp2}
Let $\mu$ and $W(\mu,\cdot)$ be as above, 
let $f:\Omega\to\R^m$ be a map of class $C^1$, and let $f_\#\mu$
be the pushforward of $\mu$ according to~$f$.
Then
\begin{equation}
\label{e:decomp2-1}
\de_x f(W(\mu, x)) \subset W(f_\#\mu, f(x))
\quad\text{for $\mu$-a.e.~$x\in\Omega$,}
\end{equation}
where $\de_xf:\R^n\to\R^m$ is the differential of $f$ at~$x$.
\end{lemma}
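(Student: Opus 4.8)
The strategy is to push forward families of curves from $\F(\mu)$ to families in $\F(f_\#\mu)$ and use the defining minimality property \ref{def:decbun-ii} of the target bundle $W(f_\#\mu,\cdot)$. The obstacle flagged in the earlier (wrong) proposal was precisely that pulling back goes the wrong way; pushing forward is the correct direction here, but it introduces a genuine technical difficulty: the image $f(F_t)$ of a $1$-rectifiable set under a $C^1$ map need not be $1$-rectifiable (it is, but the tangent-line bookkeeping and the absolute-continuity estimate $\lambda' \ll f_\#\mu$ require care, especially on the critical set where $\de_x f$ degenerates on $T_xF_t$).

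\textbf{Step 1: reduce to a statement about a single bundle $W(\cdot)$ dominating the pushed-forward tangent lines.} By property \ref{def:decbun-ii}, to prove \eqref{e:decomp2-1} it suffices to exhibit some bundle $x\mapsto W(x)$ on the target that satisfies property \ref{def:decbun-i} for $f_\#\mu$ and such that $\de_xf(W(\mu,x))\subset W(f(x))$ for $\mu$-a.e.\ $x$; then $\mu$-minimality of $W(f_\#\mu,\cdot)$ gives $W(f_\#\mu,y)\subset W(y)$ for $f_\#\mu$-a.e.\ $y$, hence $\de_xf(W(\mu,x))\subset W(f_\#\mu,f(x))$ for $\mu$-a.e.\ $x$ after composing with $f$ (using that $f_\#\mu$-null sets pull back to $\mu$-null sets). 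The natural candidate is $W(y) := \overline{\bigcup}\,\{\de_xf(W(\mu,x)) : f(x)=y\}$, i.e.\ the smallest linear subspace containing all these images, but to make this measurable and to check property \ref{def:decbun-i} it is cleaner to argue directly with $W(f_\#\mu,\cdot)$ as below.

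\textbf{Step 2: push forward a family of curves.} Fix $\{F_t : t\in I\}\in\F(\mu)$, with associated measure $\lambda = \int_I \Haus^1\trace F_t\,dt \ll \mu$. For each $t$, $f(F_t)$ is $1$-rectifiable (Lipschitz, indeed $C^1$, images of $1$-rectifiable sets are $1$-rectifiable). The delicate point is the absolute-continuity requirement for the pushed family: one wants $\lambda' := \int_I \Haus^1\trace f(F_t)\,dt \ll f_\#\mu$. This fails in general because $f$ can collapse positive $\Haus^1$-measure of $F_t$ onto an $f_\#\mu$-null set. The fix is to restrict, for each $t$, to the subset $F_t^\ast\subset F_t$ where $\de_xf$ is injective on the approximate tangent line $T_xF_t$; then $f|_{F_t^\ast}$ has a local $\Haus^1$-comparability (area formula in dimension one), so $\Haus^1\trace f(F_t^\ast) \ll f_\#(\Haus^1\trace F_t^\ast) \ll f_\#\lambda \ll f_\#\mu$, and integrating over $t$ gives $\{f(F_t^\ast)\}\in\F(f_\#\mu)$. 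On $F_t\setminus F_t^\ast$ the differential kills the tangent line, so nothing is lost in Step 3.

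\textbf{Step 3: read off the inclusion and handle the critical set.} Apply property \ref{def:decbun-i} of $W(f_\#\mu,\cdot)$ to the family $\{f(F_t^\ast)\}$: for a.e.\ $t$ and $\Haus^1$-a.e.\ $y\in f(F_t^\ast)$, the tangent line $T_yf(F_t^\ast)\subset W(f_\#\mu,y)$. By the chain rule along $1$-rectifiable sets, $T_{f(x)}f(F_t^\ast) = \de_xf(T_xF_t)$ for $\Haus^1$-a.e.\ $x\in F_t^\ast$, hence $\de_xf(T_xF_t)\subset W(f_\#\mu,f(x))$. Combining over all families $\{F_t\}\in\F(\mu)$ and invoking the characterization of $W(\mu,x)$ as (essentially) the $\mu$-essential span of all such tangent lines $T_xF_t$ — this is the content of \cite{Alb-Mar},~\S2.6, that $W(\mu,\cdot)$ is generated by the decompositions in $\F(\mu)$ — we get $\de_xf(W(\mu,x))\subset W(f_\#\mu,f(x))$ for $\mu$-a.e.\ $x$. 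On the critical part $F_t\setminus F_t^\ast$ the inclusion $\de_xf(T_xF_t)=\{0\}\subset W(f_\#\mu,f(x))$ is trivial, so discarding it in Step 2 costs nothing.

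\textbf{Main obstacle.} The crux is Step 2: showing that the pushed-forward family lies in $\F(f_\#\mu)$, i.e.\ the absolute continuity $\lambda'\ll f_\#\mu$. This requires the one-dimensional area formula to compare $\Haus^1$ on $F_t^\ast$ with $\Haus^1$ on its image, carefully excising the set where $\de_xf$ degenerates on the tangent line (on which the desired inclusion holds trivially anyway). Everything else is routine once one accepts the generation property of the decomposability bundle from \cite{Alb-Mar}.
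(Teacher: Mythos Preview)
Your argument is correct and shares the paper's core geometric step---push forward families of $1$-rectifiable sets and invoke property~\ref{def:decbun-i} for the image family---but the organization is genuinely different. The paper argues by contradiction: it selects a ``bad'' unit vectorfield $\tau$ with $\tau(x)\in W(\mu,x)$ and $\de_xf(\tau(x))\notin W(f_\#\mu,f(x))$ on a set of positive $\mu$-measure, realizes $\tau$ as the Radon--Nikod\'ym density of a normal $1$-current (via Corollary~6.5 in \cite{Alb-Mar}), decomposes that current into rectifiable curves tangent to $\tau$ (via Theorem~5.5 in \cite{Alb-Mar}), pushes these curves forward, and contradicts property~\ref{def:decbun-i} of $W(f_\#\mu,\cdot)$. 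Your direct route instead pushes forward an \emph{arbitrary} family in $\F(\mu)$ and concludes via the minimality property~\ref{def:decbun-ii}: in effect you verify that the bundle $W'(x):=(\de_xf)^{-1}\bigl(W(f_\#\mu,f(x))\bigr)$ satisfies~\ref{def:decbun-i}, whence $W(\mu,x)\subset W'(x)$ for $\mu$-a.e.~$x$. This is more elementary, since it avoids the normal-current machinery entirely and uses only the defining properties of the decomposability bundle.

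One small correction: the absolute continuity $\Haus^1\trace f(F_t)\ll f_\#(\Haus^1\trace F_t)$ already holds for the unrestricted sets $F_t$, simply because $f$ is locally Lipschitz. The restriction to $F_t^\ast$ is needed not for $\lambda'\ll f_\#\mu$ but for the \emph{reverse} absolute continuity, which is what lets you translate ``$\Haus^1$-a.e.~$y\in f(F_t^\ast)$'' into ``$\Haus^1$-a.e.~$x\in F_t^\ast$'' via the area formula with positive Jacobian. So the obstacle is slightly mislocated in your write-up, but the fix (excise the critical set, where the inclusion is trivial anyway) is correct and placed exactly where it is needed.
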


\proof[Sketch of proof]
We argue by contradiction and assume that there exists 
a Borel set $F$ with $\mu(F)>0$ where the inclusion 
in \eqref{e:decomp2-1} fails.
Then we can find a bounded Borel vectorfield $\tau$ on $\Omega$
such that 
\begin{enumeratethm}[label=\rm(\alph*)]
\item
\label{e:decomp2-2}
$\de_xf(\tau(x)) \notin W(f_\#\mu, f(x))$ 
and $|\tau(x)|=1$ for $\mu$-a.e.~$x\in F$;
\item
\label{e:decomp2-3}
$\tau(x)\in W(\mu, x)$ for $\mu$-a.e.
\end{enumeratethm}

\passo{Step~1.}
Using property~\ref{e:decomp2-3} and Corollary~6.5 in~\cite{Alb-Mar} 
we find a normal $1$-current $T$ 
such that $\tau$ is the Radon-Nikod{\'y}m density of $T$ \wrt~$\mu$, that is, 
$T=\tau\mu+T_s$ with $T_s$ singular \wrt~$\mu$. Possibly 
removing from $F$ a $\mu$-null subset where $T_s$ is concentrated, 
we can assume that $T\trace F=\tau\mu\trace F$.

\passo{Step~2.}
Using Theorem~5.5 in~\cite{Alb-Mar} we find a family of $1$-dimensional 
rectifiable sets $\{F_t \colon t\in I\}$ with $I:=[0,1]$
such that for every~$t\in I$
and $\Haus^1$-a.e.~$x\in F_t$ the approximate tangent space $T_xF_t$ 
is spanned by $\tau(x)$, 
and $\int_I (\Haus^1\trace F_t) \, dt = \mu\trace F$.

\passo{Step~3.}
For every $t \in I$ the set $E_t:=f(F_t)$ is rectifiable, 
the approximate tangent space $T_{f(x)} E_t$ is spanned 
by $\de_x f(\tau(x))$ for $\Haus^1$-a.e.~$x\in F_t$, 
the measures $\Haus^1\trace E_t$ and $f_\#(\Haus^1\trace F_t)$
are absolutely continuous \wrt each other, and
\[
\int_I ( \Haus^1\trace E_t )\, dt
\ll \int_I f_\#(\Haus^1\trace F_t) \, dt
= f_\#(\mu\trace B) \le f_\#\mu 
\, .
\]
Thus the family $\{E_t \colon t\in I\}$
belongs to $\F(f_\#\mu)$ and therefore 
property~\ref{def:decbun-i} in ~\S\ref{def:decbun}
implies that, for a.e.~$t\in I$ and 
$\Haus^1$-a.e.~$x\in F_t$,
\[
\de_x f(\tau(x)) \in T_{f(x)} E_t 
\subset W(f_\#\mu, f(x))
\, , 
\]
which means that $\de_x f(\tau(x))\in W(f_\#\mu, f(x))$
for $\mu$-a.e.~$x\in F$, in contradiction with
property~\ref{e:decomp2-2} above.
\qed

\begin{lemma}
\label{lem:decomp1}
Let $\mu$ and $W(\mu, \cdot)$ be as above, and assume that 
$\dim(W(\mu, x)) \ge d$ for $\mu$-a.e.~$x$.
Then $\mu\ll\I^d_t$.
\end{lemma}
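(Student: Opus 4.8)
The plan is to reduce the statement, via the projection characterization of $\I^d_t$-null sets recalled in~\S\ref{def:intgeo}, to a statement about pushforwards under linear projections, and then to apply Lemma~\ref{lem:decomp2}. Recall that $\mu\ll\I^d_t$ fails only if there is a Borel set $E$ with $\mu(E)>0$ which is $\I^d_t$-null, i.e.\ such that $\Haus^d(p_W(E))=0$ for a.e.\ $d$-plane $W$ (equivalently, for a.e.\ orthogonal projection $p:\R^n\to\R^d$, after fixing coordinates on the target). So I would argue by contradiction: suppose such an $E$ exists, and without loss of generality (restricting $\mu$ and using that $\dim W(\mu,x)\ge d$ is inherited by restrictions) assume $\mu$ is concentrated on $E$, so $\mu$ itself is $\I^d_t$-null in the sense that $\Haus^d(p(\supp\text{-image}))=0$; more precisely $p_\#\mu$ is carried by the $\Haus^d$-null set $p(E)$ for a.e.\ linear $p:\R^n\to\R^d$, hence $p_\#\mu\perp\Haus^d$, in fact $p_\#\mu$ is singular with respect to Lebesgue measure $\Leb^d$ on $\R^d$ for a.e.\ such~$p$.

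Next I would feed this into Lemma~\ref{lem:decomp2} with $f=p$ a linear projection: it gives $p(W(\mu,x))=\de_x p(W(\mu,x))\subset W(p_\#\mu,p(x))$ for $\mu$-a.e.\ $x$. Now the key dichotomy: for $\mu$-a.e.\ $x$ we have $\dim W(\mu,x)\ge d$, and $p$ restricted to a $d$-plane is injective for a.e.\ $p$ (the bad set of projections killing a given $d$-plane is Haar-null, and one can integrate this over the—countably generated—family of bundle values, or simply note the set of $(p,x)$ with $p|_{W(\mu,x)}$ degenerate is null in the product and apply Fubini). Hence for a.e.\ $p$ and $\mu$-a.e.\ $x$, $p(W(\mu,x))$ is all of $\R^d$, so $W(p_\#\mu,p(x))=\R^d$ for $p_\#\mu$-a.e.\ point of $\R^d$. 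But a measure on $\R^d$ whose decomposability bundle is the full space $\R^d$ a.e.\ is absolutely continuous with respect to $\Leb^d$: this is exactly the characterization of the maximal decomposability bundle in the top dimension from~\cite{Alb-Mar} (the bundle equals the ambient space $\mu$-a.e.\ iff $\mu\ll\Leb^d$). This contradicts $p_\#\mu\perp\Leb^d$, and the contradiction proves $\mu\ll\I^d_t$.

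The main obstacle I anticipate is the Fubini-type argument ensuring that for \emph{a.e.}\ projection $p$ the restriction $p|_{W(\mu,x)}$ is nondegenerate \emph{simultaneously} for $\mu$-a.e.\ $x$: one must commute "$\mu$-a.e.\ $x$" with "a.e.\ projection~$p$", which requires the Haar-null estimate "for each fixed $d$-plane $H$, a.e.\ projection is injective on $H$" to be combined with measurability of $x\mapsto W(\mu,x)$ into the Grassmannian and an application of Fubini on $\mathrm{Gr}(d,n)\times\Omega$. A secondary technical point is invoking the correct statement from~\cite{Alb-Mar} identifying full-dimensional decomposability bundle with absolute continuity in the top dimension; once that is cited the rest is bookkeeping. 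Everything else—the reduction through~\S\ref{def:intgeo} and the single application of Lemma~\ref{lem:decomp2}—is routine.
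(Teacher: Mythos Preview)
Your approach is essentially the same as the paper's: reduce via the projection characterization of $\I^d_t$-null sets, push the decomposability bundle forward through a generic orthogonal projection using Lemma~\ref{lem:decomp2}, handle the ``generic projection is injective on a fixed $d$-plane'' issue by a Fubini argument on $\Gr(d,n)\times\Omega$, and conclude that the pushforward has full decomposability bundle on $\R^d$. The paper does this directly (showing $(p_V)_\#\mu\ll\Haus^d$ for a.e.\ $V$) rather than by contradiction, but that is only packaging.

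There is one substantive correction. The implication you need at the end, ``a finite measure on $\R^d$ whose decomposability bundle equals $\R^d$ a.e.\ is absolutely continuous with respect to $\Leb^d$'', is \emph{not} proved in~\cite{Alb-Mar}; the easy direction in~\cite{Alb-Mar} is the converse. The forward implication is a consequence of the De~Philippis--Rindler theorem~\cite{DP-Rin} (the paper invokes Corollary~1.12 and Lemma~3.1 there), and this is precisely the deep input that makes the lemma work. So your plan is correct, but you should cite~\cite{DP-Rin} rather than~\cite{Alb-Mar} for that final step.
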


\begin{proof}
We first introduce some notation:
\begin{itemize}
[leftmargin=25pt, itemsep=2pt]
\item
$\lambda_d$ is the Haar measure on the Grassmannian $\Gr(d,n)$;
\item
for every $V \in \Gr(d,n)$, $p_V:\R^n\to V$
is the orthogonal projection onto~$V$
and $\mu_V$ is the pushforward of
the measure $\mu$ according to~$p_V$.
\end{itemize}

Using the characterization of $\I^d_t$-null sets given in 
\S\ref{def:intgeo} it is easy to show that the assertion 
$\mu\ll\I^d_t$ 
is implied by the assertion $\mu_V\ll\Haus^d$ 
for $\lambda_d$-a.e.~$V$. The proof of the latter is 
divided in four steps.

\passo{Step~1: if $W\in \Gr(d',n)$ with $d'\ge d$ 
then $p_V(W)=V$ for $\lambda_d$-a.e.~$V$.}
Possibly replacing $W$ with a subspace, 
we can assume that $W$ has dimension~$d$.
Since $\ker(p_V)=V^\perp$, we have that
\[
p_V(W)=V 
\ \text{if and only if}\
\dim(W\cap V^\perp)=0
\, .
\]
Therefore, taking into account that the map $V\mapsto V^\perp$ 
is a bijection from $\Gr(d,n)$ to $\Gr(n-d,n)$ that preserves 
the respective Haar measures, we can reformulate the claim 
as follows: 
\[
\dim(W\cap Z)=0
\quad\text{for $\lambda_{n-d}$-a.e.~$Z\in\Gr(n-d,n)$.}
\]
This is equivalent to saying that the set 
\[
S_k:=\big\{ Z\in\Gr(n-d,n) \colon \dim(W\cap Z)=k \big\}
\]
is $\lambda_{n-d}$-null for every $k>0$, which is a consequence
of the fact that $S_k$ is actually a smooth submanifold of $\Gr(n-d,n)$ 
with dimension strictly lower than $\Gr(n-d,n)$.

\passo{Step~2: for $\lambda_d$-a.e.~$V$ one has
$p_V(W(\mu, x))=V$ for $\mu$-a.e.~$x\in\Omega$.}
By assumption we have $\dim(W(\mu, x))\ge d$ for 
$\mu$-a.e.~$x$, and then it suffices to use Step~1.

\passo{Step~3: for $\lambda_d$-a.e.~$V$ one has
\begin{equation}
\label{e:decomp1}
W(\mu_V, y)=V
\quad\text{for $\mu_V$-a.e.~$y\in V$.}
\end{equation}}
\indent
By applying Lemma~\ref{lem:decomp2} to the map $f:=p_V$
we obtain that, for every $d$-plane~$V$,
\[
W(\mu_V, p_V(x)) \supset p_V(W(\mu, x))
\quad\text{for $\mu$-a.e.~$x\in\Omega$,}
\]
and recalling Step~2 we obtain that, for $\lambda_d$-a.e.~$V$,
\[
W(\mu_V,p_V(x)) \supset p_V(W(\mu, x))=V
\quad\text{for $\mu$-a.e.~$x\in\Omega$,}
\]
which implies
\[
W(\mu_V, y) \supset V
\quad\text{for $\mu_V$-a.e.~$y\in V$}
\]
because $\mu_V$ is the pushforward of $\mu$ through $p_V$.
To obtain \eqref{e:decomp1} it is enough to recall that
$W(\mu_V, y) \subset V$ for $\mu_V$-a.e.~$y\in V$ because 
$\mu_V$ is a measure on~$V$.

\passo{Step~4: $\mu_V\ll\Haus^d$ for $\lambda_d$-a.e.~$V$.}
Identity~\eqref{e:decomp1} means the following:
if we identify the $d$-plane $V$ with $\R^d$ (isometrically), 
then $\mu_V$ is a measure on $\R^d$ 
whose decomposability bundle is a.e.~equal to $\R^d$, 
and therefore Corollary~1.12 and Lemma~3.1 in~\cite{DP-Rin}
imply that $\mu_V$ is absolutely continuous 
\wrt the Lebesgue measure on $\R^d$, that is, 
the restriction of $\Haus^d$ to~$V$.
\end{proof}

\begin{proof}[Proof of Proposition~\ref{prop:decomp}]
Let $\bar\mu$ be the restriction of $\mu$ to the set $E$.
By Proposition~2.9(i) in \cite{Alb-Mar} we have that 
$W(\bar\mu, x)=W(\mu, x)$ for $\bar\mu$-a.e.~$x$, 
which implies that $\dim(W(\bar\mu, x)) \ge d$
for $\bar\mu$-a.e.~$x$. 
We conclude the proof by applying Lemma~\ref{lem:decomp1}.
\end{proof}

\sottosezione*{Sparseness of a normal current and of its boundary}
In this subsection we establish a relation between
the degree of sparseness of a normal current $T$ and that of its boundary $\bd T$, 
both expressed in terms of absolute continuity \wrt Hausdorff measures.

\begin{proposition}
\label{prop:diffuse}
Let $T=\tau\mu$ be a normal $k$-current on the open set $\Omega$
in $\R^n$ with boundary $\bd T=\tau'\mu'$ such that
\begin{enumeratethm}[label={\rm(\alph*)}]
\item\label{prop:diffuse-i}
there exists a real number $\alpha\in [k,n]$ such that $\mu\ll\Haus^\alpha$.

\item\label{prop:diffuse-ii}
there exists a $C^1$-vectorfield $v$ on $\Omega$ 
such that $v\wedge\tau=0$~$\mu$-a.e.
\end{enumeratethm}
Let 
\[
E := \big\{ x\in\Omega \colon v(x) \wedge\tau'(x) \ne 0 \big\}
\, .
\]
Then $\mu'\trace E \ll\Haus^{\alpha-1}$.
\end{proposition}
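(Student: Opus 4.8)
\emph{Reduction to a constant field.}
The first move is to reduce to the case $v=e_1$. Since $v(x)=0$ forces $v(x)\wedge\tau'(x)=0$, we have $E\subset\{v\ne0\}$, and as absolute continuity \wrt $\Haus^{\alpha-1}$ is a local property, it suffices to prove the conclusion on the members of an open cover of $\{v\ne0\}$. Around a point where $v\ne0$ I would pick a $C^1$ flow-box chart $\Psi$ with $\de\Psi(v)=e_1$, a smaller neighbourhood $U'$ with $\overline{U'}$ compact in the chart, and $\chi\in C^\infty_c$ with $\chi\equiv1$ on $U'$. Then $T_0:=\Psi_\#(\chi T)=\tau_0\mu_0$ is a compactly supported normal $k$-current with $e_1\wedge\tau_0=0$ $\mu_0$-a.e.\ (writing $\tau=v\wedge\sigma$ $\mu$-a.e., one has $\de\Psi(\chi\tau)=\chi\,e_1\wedge\de\Psi(\sigma)$ $\mu$-a.e.) and $\mu_0\ll\Haus^\alpha$ (because $\Psi$ is locally bi-Lipschitz and $\|T\|\ll\Haus^\alpha$), while $\bd T_0$ restricted to $\Psi(U')$ equals $\Psi_\#(\bd T\trace U')$. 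Transferring the conclusion for $T_0$ back by $\Psi^{-1}$ (which preserves $\Haus^{\alpha-1}$-negligibility) and restricting to $U'$ then gives it on $U'$. So from now on assume $T=\tau\mu$ is a compactly supported normal $k$-current with $\mu\ll\Haus^\alpha$ and $e_1\wedge\tau=0$ $\mu$-a.e., and $E=\{e_1\wedge\tau'\ne0\}$.

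\emph{The key identity.}
Let $\partial_1T$ be the $\esterno_k(\R^n)$-valued measure defined by $\scalar{\partial_1T}{\omega}:=-\scalar{T}{\partial_1\omega}$ for smooth compactly supported $k$-forms $\omega$, where $\partial_1\omega$ differentiates the coefficients of $\omega$ \wrt $x_1$. Cartan's formula for the constant field $e_1$ gives $\partial_1\omega=\de(e_1\antitrace\omega)+e_1\antitrace\de\omega$, hence
\[
\scalar{\partial_1T}{\omega}=-\scalar{\bd T}{e_1\antitrace\omega}-\scalar{T}{e_1\antitrace\de\omega}.
\]
The last term is $\pm\int\scalar{e_1\wedge\tau}{\de\omega}\,d\mu=0$ by the tangency hypothesis, and the first is $\pm\int\scalar{e_1\wedge\tau'}{\omega}\,d\mu'$; therefore $\partial_1T=\pm(e_1\wedge\tau')\mu'$, so $|\partial_1T|=|e_1\wedge\tau'|\mu'$, which is mutually absolutely continuous with $\mu'\trace E$ (the bounded factor $|e_1\wedge\tau'|$ being $>0$ precisely on $E$). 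Thus it suffices to prove $\partial_1T\ll\Haus^{\alpha-1}$. Writing $\tau=e_1\wedge\sigma$ with $\sigma$ a Borel $(k-1)$-vectorfield valued in $\esterno_{k-1}(\Span\{e_2,\dots,e_n\})$ — the map $\sigma\mapsto e_1\wedge\sigma$ is a linear isomorphism onto $\{e_1\wedge w:w\in\esterno_{k-1}(\R^n)\}$, so $\sigma$ is a Borel function of $\tau$ — the only non-zero components of $T=\tau\mu$, hence of $\partial_1T$, are indexed by multi-indices containing $1$ and equal $\sigma_J\mu$, resp.\ $\partial_1(\sigma_J\mu)$. Hence it is enough to show $\partial_1\rho\ll\Haus^{\alpha-1}$ for each finite signed measure $\rho:=\sigma_J\mu$: indeed $\rho\ll\mu\ll\Haus^\alpha$, and $\partial_1\rho$ is finite, being a component of the finite measure $\partial_1T=\pm(e_1\wedge\tau')\mu'$.

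\emph{A one-dimensional slicing lemma.}
It remains to prove: if $\rho$ is a finite signed measure on $\R^n$ with $\rho\ll\Haus^\alpha$ and $\partial_1\rho$ a finite signed measure, then $\partial_1\rho\ll\Haus^{\alpha-1}$. Let $\pi\colon\R^n\to\R^{n-1}$ forget the first coordinate. Disintegrating $|\rho|+|\partial_1\rho|$ along $\pi$ and using that $\partial_1\rho$ is a finite measure, one sees that for almost every fibre $\{y\}\times\R$ the fibre measure $\rho^y$ of $\rho$ is $\Leb^1$-absolutely continuous with $BV$ density $g_y$ and $(\partial_1\rho)^y=Dg_y$; consequently $\pi_\#|\partial_1\rho|\ll\pi_\#|\rho|$ (if $\pi_\#|\rho|(N)=0$ then $g_y\equiv0$, hence $Dg_y=0$, for a.e.\ $y\in N$). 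Moreover $\pi_\#|\rho|\ll\Haus^{\alpha-1}$: if $\Haus^{\alpha-1}(N)=0$ then $\Haus^\alpha(\R\times N)=0$ — write $\R\times N$ as a countable union of slabs $[j,j+1]\times N$, $j\in\mathbb Z$, and use $\Haus^\alpha([j,j+1]\times N)\le C\,\Haus^1([j,j+1])\,\Haus^{\alpha-1}(N)=0$ — whence $\pi_\#|\rho|(N)=|\rho|(\R\times N)=0$. Finally, for Borel $A$ with $\Haus^{\alpha-1}(A)=0$, the set $\pi(A)$ is $\Haus^{\alpha-1}$-negligible ($\pi$ being $1$-Lipschitz), hence $\pi_\#|\partial_1\rho|$-negligible, and since $A\subset\pi^{-1}(\pi(A))$ we conclude $|\partial_1\rho|(A)=0$. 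This closes the proof; applied to each $\rho=\sigma_J\mu$ it yields $\partial_1T\ll\Haus^{\alpha-1}$, hence $\mu'\trace E\ll\Haus^{\alpha-1}$.

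\emph{The main obstacle.}
The conceptual heart is the key identity: it is exactly the tangency $e_1\wedge\tau=0$ that annihilates the term $\scalar{T}{e_1\antitrace\de\omega}$, so that the derivative of $T$ along a direction it contains sees only the transverse part of $\bd T$ and not $T$ itself, which is what makes the dimension drop by one. The genuinely technical points are the flow-box/cut-off reduction — checking that pushing forward by a merely $C^1$ diffeomorphism preserves normality, the estimate $\mu\ll\Haus^\alpha$, and the tangency relation, and that the boundaries agree on $U'$ — and the disintegration used in the slicing lemma.
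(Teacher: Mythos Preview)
Your argument is correct and rests on the same conceptual core as the paper's: because $v\wedge\tau=0$, the derivative of $T$ along the flow of $v$ involves only $\bd T$ (more precisely, only $(v\wedge\tau')\mu'$). The paper expresses this via the homotopy formula---its Lemma~\ref{lemma:magic} shows $\Phi_\#(\ic{0,\delta}\times T)=0$, whence $\Phi_\#(\ic{0,\delta}\times\bd T)=(\Phi_\delta)_\# T-T$---while your Cartan computation $\partial_1 T=\pm(e_1\wedge\tau')\mu'$ is the infinitesimal version of the same identity after straightening. The organisations then diverge. The paper stays in the original coordinates and argues by contradiction: if $\mu'\trace E$ charged an $\Haus^{\alpha-1}$-null set $E'$, the flow image $G=\Phi([0,\delta]\times E'')$ would be $\Haus^\alpha$-null yet charged by the current $S=(\Phi_\delta)_\# T-T\ll\Haus^\alpha$; showing $S(G)\ne0$ requires a density-point and double-limit computation. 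You instead first pass to flow-box coordinates (paying the price of checking that a $C^1$ change of variables preserves normality, the condition $\mu\ll\Haus^\alpha$, and the tangency), and then reduce to a clean scalar statement: $\rho\ll\Haus^\alpha$ with $\partial_1\rho$ finite implies $\partial_1\rho\ll\Haus^{\alpha-1}$, proved by disintegration over the projection forgetting~$x_1$. Your route is more modular and sidesteps the density-point argument; the paper's route avoids the coordinate change and its verifications. Both use the product inequality for Hausdorff measures (Federer~2.10.45) at the same point, and both need $v\in C^1$ for exactly the same reason---Lipschitz regularity of the flow/flow-box---consistent with Example~\ref{prop:diffuse}+1 in the paper.
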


\begin{remarks}\label{rem:diffuse}
\begin{enumeraterem}[ref={\ref*{rem:diffuse}(\roman*)}]
\item
\label{rem:diffuse-i}
If $\tau$ is simple, the condition 
$v\wedge\tau=0$ $\mu$-a.e.\ in assumption~\ref{prop:diffuse-ii}
is equivalent to $v(x)\in\Span(\tau(x))$ for $\mu$-a.e.~$x$, that is, 
$v$ is tangent to $T$.
\item
\label{rem:diffuse-ii}
If $k>1$ and $\tau'$ is simple, then 
$E=\{ x \colon v(x) \notin\Span(\tau'(x))\}$.
\item
\label{rem:diffuse-iii}
If $k=1$ then $\tau'$ is a real function with $\tau'\ne 0$ $\mu'$-a.e.,
and therefore $E$ can be equivalently defined as 
$E=\{ x \colon v(x) \ne 0\}$.
\end{enumeraterem}
\end{remarks}

Before the proof we present two examples that illustrate
the optimality of this statement:
the first one shows that the vectorfield $v$ cannot be just 
continuous, and the second one shows that the 
Hausdorff measures 
cannot be replaced by the integral geometric measures.

\begin{parag}[Example]
For every $a\in[0,1]$ let $T_a$ be the integral $1$-current in $\R^2$ 
associated to the (oriented) curve parametrized by 
$\gamma_a(t):=(t, at^2)$ with $t\in [0,1]$,
and let $T$ be the normal $1$-current given by the superposition
of all $T_a$, that is,
\[
\scalar{T}{\omega}:=\int_0^1 \scalar{T_a}{\omega} \, da
\quad\text{for every $1$-form $\omega$ of class $C^\infty_c$.}
\]
Then
\[
T= \Big(\frac{1}{x_1^2},\frac{2x_2}{x_1^3}\Big) \, \Leb^2\trace F 
\, , \quad
\bd T = \Haus^1 \trace I - \delta_0
\, ,
\]
where $\delta_0$ is the Dirac mass at $0$, and $F, I$ are the sets in $\R^2$ defined by
\begin{align*}
F & :=\big\{x\colon 0\le x_1\le 1\, , \ 0\le x_2\le x_1^2\big\} \, , \\ 
I & :=\big\{x\colon x_1=1\, , \ 0\le x_2\le 1\big\} \, .
\end{align*}
Thus $\mu\ll\Leb^2=\Haus^2$ but $\mu'\not\!\ll\Haus^1$.

Let now $v:F\to\R^2$ be the vectorfield given by 
$v(x):=(1,2x_2/x_1)$ if $x\ne 0$ and $v(0):=(1,0)$. 
One can check that $v$ is of class $C^{0,1/2}$ on $F$ 
and thus can be extended to the entire $\R^2$ with the same regularity. 
Moreover $v$ is tangent to $T$ and never vanishes on the set $F$, 
which contains the support of $\mu'$; therefore
the set $E$ contains the support of $\mu'$ 
(see Remark~\ref{rem:diffuse-iii})
and thus $\mu'\trace E=\mu'\not\!\ll\Haus^1$, 
which means that Proposition~\ref{prop:diffuse} 
fails for this choice of $v$.
(On the other hand, every vectorfield $v$ of class $C^1$ 
tangent to $T$ must vanish at $0$, thus for such $v$ the set $E$ 
does not contain $0$ and $\mu'\trace E \ll \Haus^1$, in accordance with 
Proposition~\ref{prop:diffuse}.)
\end{parag}

\begin{parag}[Example]
Take a Borel function $g:[0,1]\to\R$ whose graph $\Gamma$ 
is purely unrectifiable and $\Haus^1$-finite, and for every
$r\in\R$ consider the map $f_r:[0,1]\to\R^2$ given by $f_r(s):=(s, g(s)+r)$.
For every $a\in [0,1]$ we denote by $T_a$ the $1$-current in $\R^2$ associated 
to the (oriented) vertical segment $I_a:=[f_0(a),f_1(a)]$, 
and by $T$ the superposition of all such $T_a$ (defined as in the previous
example).
Then
\[
T=e \, \Leb^2\trace F
\, , \quad
\bd T = \lambda_1-\lambda_0
\, , 
\]
where $e:=(0,1)$, $F$ is the union of the segments $I_a$ with $0\le a\le 1$,
and $\lambda_r$ is the pushforward of the Lebesgue measure on $[0,1]$ 
according to the map~$f_r$.

Thus $\mu\ll\Leb^2=\Haus^2=\I^2_t$.
Moreover the constant vectorfield $v(x):=e$ is tangent to $T$
and never vanishes, and then Remark~\ref{rem:diffuse-iii} yields
$\mu'\trace E=\mu'$.

On the other hand $\mu'$ is supported on the set 
$\Gamma\cup(\Gamma+e)$, which is $\Haus^1$-finite and purely 
unrectifiable, and therefore $\I^1_t$-null, 
which implies that $\mu'\trace E=\mu'$ is singular \wrt~$\I^1_t$.
(However $\mu'\ll\Haus^1$, as predicted by 
Proposition~\ref{prop:diffuse}.)
\end{parag}

We now pass to the proof of Proposition~\ref{prop:diffuse}.

Using a localization argument we reduce to the case
where $\Omega=\R^n$ and $T$ and $v$ have compact support.
Moreover we can assume that $|\tau'|=1$~$\mu'$-a.e.

\smallskip
In the proof we use the flow associated to the vectorfield $v$, namely the map 
$\Phi:\R\times\R^n\to\R^n$ defined by
\[
\Phi(0,x)=x
\, , \quad
\frac{\bd \Phi}{\bd t}(t,x)= v( \Phi(t,x))
\quad\text{for every $t\in\R$, $x\in\R^n$,}
\]
and we write $\Phi_t(x)$ for $\Phi(t,x)$.
Since $v$ is of class $C^1$ and compactly supported,
the map $\Phi$ is well-defined and of class $C^1$, and each map 
$\Phi_t$ is bi-Lipschitz and coincides with the identity out of
a compact set which does not depend on $t$.  

\begin{lemma}
\label{lemma:magic}
Let $T$ and $v$ be as in Proposition~\ref{prop:diffuse}, 
and let $\Phi$ be as above. 
Given $t_0, t_1\in\R$, let $\ic{t_0,t_1}$ be the $1$-current in $\R$ associated
to the oriented interval $[t_0,t_1]$.
Then the pushforward of the product current $\ic{t_0,t_1}\times \bd T$
on $\R\times\R^n$ according to the map $\Phi$ satisfies
\begin{equation}
\label{e:magic}
\Phi_\# \big(\ic{t_0,t_1}\times \bd T \big) = (\Phi_{t_1})_\# T - (\Phi_{t_0})_\# T
\, .
\end{equation}
\end{lemma}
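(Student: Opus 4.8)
The plan is to recognize \eqref{e:magic} as a special case of the homotopy formula for currents, where the homotopy is provided by the flow $\Phi$ of $v$. Recall the general homotopy formula: if $h:[0,1]\times\R^n\to\R^n$ is a Lipschitz map and $S$ is a normal current in $\R^n$, then, writing $h_j:=h(j,\cdot)$ for $j=0,1$,
\[
(h_1)_\# S - (h_0)_\# S = \bd h_\#\big(\ic{0,1}\times S\big) + h_\#\big(\ic{0,1}\times \bd S\big)
\, .
\]
(See \cite{Federer}, \S4.1.9, or \cite{Simon}, \S26.) I would apply this with $S=\bd T$ and with $h$ the reparametrized flow $h(s,x):=\Phi(t_0+s(t_1-t_0),x)$, so that $h_0=\Phi_{t_0}$ and $h_1=\Phi_{t_1}$. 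Since $\bd(\bd T)=0$, the first term on the right-hand side drops out, and one is left with
\[
(\Phi_{t_1})_\#(\bd T) - (\Phi_{t_0})_\#(\bd T) = h_\#\big(\ic{0,1}\times\bd(\bd T)\big) \qquad\text{(trivially $0$)}
\]
— which is \emph{not} quite what we want. So instead I apply the homotopy formula with $S=T$ itself: then
\[
(\Phi_{t_1})_\# T - (\Phi_{t_0})_\# T = \bd\, \Phi_\#\big(\ic{t_0,t_1}\times T\big) + \Phi_\#\big(\ic{t_0,t_1}\times \bd T\big)
\, ,
\]
after rescaling $\ic{0,1}$ to $\ic{t_0,t_1}$ and absorbing the affine reparametrization into $\Phi$. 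Thus \eqref{e:magic} is equivalent to the claim that the first term vanishes, i.e.
\[
\bd\,\Phi_\#\big(\ic{t_0,t_1}\times T\big) = 0
\, .
\]

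To see this, I would use the crucial geometric fact that $v$ is tangent to $T$ (assumption~\ref{prop:diffuse-ii}, namely $v\wedge\tau=0$ $\mu$-a.e.): the flow lines of $v$ are tangent to the $k$-vectorfield $\tau$, so the $(k+1)$-current $\ic{t_0,t_1}\times T$ on $\R\times\R^n$, when pushed forward by $\Phi$, becomes \emph{degenerate} — its orienting $(k+1)$-vector is (a multiple of) $\partial_t\Phi\wedge\Phi_\#\tau = v\circ\Phi \wedge \Phi_\#\tau$, and since $\Phi_\#$ preserves the span pointwise along trajectories while $v$ lies in that span, this $(k+1)$-vector vanishes. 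Hence $\Phi_\#(\ic{t_0,t_1}\times T)=0$ as a current (its mass is $0$), and in particular its boundary vanishes. Plugging this back gives exactly \eqref{e:magic}.

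The main obstacle I anticipate is making the degeneracy argument rigorous at the level of \emph{currents with merely $L^1$ coefficients} rather than smooth or rectifiable ones: one must verify that $\Phi_\#(\ic{t_0,t_1}\times T)$ really has zero mass, which requires controlling the pushforward of the product measure $(\Leb^1\trace[t_0,t_1])\times\mu$ under $\Phi$ and checking that the orienting multivector — computed via the chain rule / area formula for Lipschitz maps, $\Phi_\#(w\mu)=(\de\Phi(w))\,\Phi_\#\mu$ up to the Jacobian factor — is indeed $v\wedge(\de\Phi_x\tau)$ up to normalization, and that $v(\Phi(t,x))\in\Span(\de\Phi_{(t,x)}\,\text{of the spatial part})$. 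The key point is that $\Phi_t$ maps $\Span(\tau(x))$ into $\Span((\Phi_t)_\#\tau(\Phi_t(x)))$ and that $v$ evaluated along a trajectory is the velocity of that trajectory, hence tangent; combined with $v\wedge\tau=0$ at the base point, the wedge collapses. Once this is in place, the homotopy formula does the rest. An alternative, purely computational route would be to test \eqref{e:magic} against smooth $k$-forms $\omega$ on $\R^n$, expand both sides using the definitions of product current, pushforward, and boundary, and invoke the Leibniz rule \eqref{e:leibinz-curr} together with $\tfrac{\de}{\de t}(\Phi_t)_\#(\cdot)=(\Phi_t)_\#(\text{Lie derivative along }v)$; but I expect the homotopy-formula argument above to be cleaner.
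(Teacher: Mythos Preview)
Your approach is correct and is essentially the paper's own proof: apply the homotopy formula with $S=T$, reduce \eqref{e:magic} to $\Phi_\#(\ic{t_0,t_1}\times T)=0$, and obtain the latter by showing that the orienting $(k+1)$-vector $(\de_{(t,x)}\Phi)_\#(e_0\wedge\tau(x))$ vanishes $\mu$-a.e. The only difference is cosmetic: where you invoke the flow equation $\partial_t\Phi=v\circ\Phi$ together with the invariance $(\de_x\Phi_t)(v(x))=v(\Phi_t(x))$, the paper packages both into a single line by differentiating the semigroup identity $\Phi(t,\Phi(s,x))=\Phi(t+s,x)$ in $s$ at $s=0$ to get $\de_{(t,x)}\Phi(e_0)=\de_{(t,x)}\Phi(v(x))$, from which $(\de_{(t,x)}\Phi)_\#(e_0\wedge\tau(x))=(\de_{(t,x)}\Phi)_\#(v(x)\wedge\tau(x))=0$ is immediate; this bypasses the slightly vague ``$\Phi_\#$ preserves the span along trajectories'' phrasing and makes the degeneracy step a two-line computation rather than an argument about spans.
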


\begin{proof}
The homotopy formula (see for instance~\cite{Kra-Par}, \S7.4.3) states that
\[
\bd\big( \Phi_\#(\ic{t_0,t_1}\times T) \big) 
= (\Phi_{t_1})_\# T - (\Phi_{t_0})_\# T - \Phi_\# \big(\ic{t_0,t_1}\times \bd T \big)
\, , 
\] 
and then \eqref{e:magic} is implied by 
\begin{equation}
\label{e:magic0}
\Phi_\# \big(\ic{t_0,t_1}\times T \big) = 0
\, .
\end{equation}

The proof of \eqref{e:magic0} is divided in two steps.
We denote by $\de_{(t,x)}\Phi$ 
the differential of $\Phi$ at the point $(t,x)$, viewed 
as a linear map from $\R\times\R^n$ to $\R^n$, 
and let $e_0:=(1,0)\in\R\times\R^n$.
Moreover we tacitly identify $v\in\R^n$ with 
$(0,v)\in\R\times\R^n$, which yields an identification 
of $k$-vectors in $\R^n$ with $k$-vectors in $\R\times\R^n$.

\passo{Step~1: for every $t$ and $\mu$-a.e.~$x$ the pushforward
of the $(k+1)$-vector $e_0\wedge\tau(x)$ according to the linear map
$\de_{(t,x)}\Phi$ is null, that is
\begin{equation}
\label{e:magic0.1}
\big(\de_{(t,x)}\Phi\big)_\# (e_0\wedge\tau(x)) = 0
\, .
\end{equation}}%
Differentiating the semigroup identity
$\Phi(t+s,x) = \Phi(t,\Phi(s,x))$ \wrt~$s$
at $s=0$ we get
\[
\de_{(t,x)}\Phi(e_0) = \de_{(t,x)}\Phi(v(x))
\, ,
\]
and then 
\[
(\de_{(t,x)}\Phi)_\# (e_0\wedge \tau(x))
=(\de_{(t,x)}\Phi)_\# (v(x)\wedge \tau(x))
\, , 
\]
and using assumption~\ref{prop:diffuse-ii} in 
Proposition~\ref{prop:diffuse} we get~\eqref{e:magic0.1}.

\passo{Step~2: proof of \eqref{e:magic0}.}
Using \eqref{e:magic0.1}, for every test $(k+1)$-form $\omega$ on $\R\times\R^n$
we obtain
\begin{align*}
	\bigscalar{\Phi_\# \big( & \ic{t_0,t_1} \times T \big)}{\omega}
  = \bigscalar{\ic{t_0,t_1}\times T}{\Phi^\#(\omega)} \\
& = \int_{t_0}^{t_1} \int_{\R^n}
	\bigscalar{\omega(\Phi(t,x))}{\big(\de_{(t,x)}\Phi\big)_\# (e_0\wedge\tau(x))}
	\, d\mu(x) \, dt 
	= 0
	\, , 
\end{align*}
and \eqref{e:magic0} is proved.
\end{proof}

\begin{proof}[Proof of Proposition~\ref{prop:diffuse}]
We argue by contradiction and assume that there exists 
a compact set $E'\subset E$ such that
\[
\Haus^{\alpha-1}(E')=0
\, , \quad
\mu'(E')>0 
\, .
\]
Next we choose a point $x_0\in E'$ where the map 
$v\wedge\tau'$ is approximately continuous and the set $E'$ has density 
$1$ (in both cases the underlying measure is $\mu'$).
We fix for the time being $\delta, r>0$ and consider the sets
\[
E'':=E'\cap \overline{B(x_0,r)}
\, , \quad
F:=[0,\delta]\times E''
\, , \quad
G:=\Phi(F)
\, ,
\]
and the $k$-current
\[
S:=\Phi_\# \big( \ic{0,\delta}\times \bd T \big)
\, .
\]
We claim that
\begin{enumeratethm}
\item\label{s1}
$\Haus^\alpha(G)=0$; 
\item\label{s2}
$S$, viewed as a vector-vauled measure, satisfies $S\ll\Haus^\alpha$;%
\item\label{s3}
$S(G) \ne 0$ for $\delta$ and $r$ suitably chosen. 
\end{enumeratethm}
Note that \ref{s1} and \ref{s2} imply $S(G)=0$, which contradicts \ref{s3}.
To conclude the proof it remains to prove claims \ref{s1}--\ref{s3}.

\passo{Step~1: proof of~\ref{s1}.}
We have the following chain of implications:
$\Haus^{\alpha-1}(E')=0$
$\Rightarrow$
$\Haus^{\alpha-1}(E'')=0$
$\Rightarrow$
$\Haus^{\alpha}(F)=0$
$\Rightarrow$
$\Haus^{\alpha}(G)=0$
(the second implication follows from \cite{Federer}, Theorem~2.10.45; the third one 
from the fact that $\Phi$ is Lipschitz).

\passo{Step~2: proof of~\ref{s2}.}
Using the definition of the current $S$ and Lemma~\ref{lemma:magic} we obtain
$S=(\Phi_\delta)_\# T - T$; to conclude we recall that $T\ll\mu\ll\Haus^\alpha$ 
by assumption, and observe that 
 $(\Phi_\delta)_\# T\ll(\Phi_\delta)_\# \mu \ll \Haus^\alpha$
because the map $\Phi_\delta$ is bi-Lipschitz.

\passo{Step~3: proof of~\ref{s3}.}
To prove this claim we set $\rho(r):= \mu'\big( \overline{B(x_0,r)} \big)$ 
and show that 
\begin{equation}
\label{e:diffuse0}
\lim_{r\to 0}\lim_{\delta\to 0} \frac{S(G)}{\delta\rho(r)} 
= v(x_0)\wedge \tau'(x_0) \ne 0
\, .
\end{equation}
Let $\lambda$ be the product measure on $\R\times\R^n$ given by 
$\lambda:=(\Leb^1\trace [0,\delta])\times \mu'$, and let
$g:\R\times\R^n\to\esterno_k(\R^n)$ be the map given 
by $g(t,x):= (\de_{(t,x)}\Phi)_\# (e_0\wedge \tau'(x))$.
Using the definition of pushforward of currents we obtain
\begin{equation}
\label{e:diffuse1}
S(G) = \int_{\Phi^{-1}(G)} g\, d\lambda
= A_1 + A_2 + A_3
\end{equation}
where
\begin{align*}
A_1 & := \int_{F} g(0,x) \, d\lambda(t,x) = \delta \int_{E''} g(0,x) \, d\mu'(x) \, , \allowdisplaybreaks[2]  \\
A_2 & := \int_{F} \big( g(t,x)-g(0,x) \big)\, d\lambda(t,x) \, , \\
A_3 & := \int_{\Phi^{-1}(G)\setminus F} g(t,x) \, d\lambda(t,x) \, .
\end{align*}
The definitions of $g$ and $\Phi$ yield
\[
g(0,x)=v(x)\wedge \tau'(x)
\, .
\]
Using this identity, the definitions of $\rho(r)$ and $E''$, and the choice of $x_0$ 
we obtain that 
\begin{equation}
\label{e:diffuse3}
\frac{A_1}{\delta\rho(r)} 
= \frac{1}{\rho(r)} \int_{\overline{B(x_0,r)}\cap E'}  \hskip - 5pt v\wedge \tau' \, d\mu'
\, \mathop{\longrightarrow}\limits_{r\to 0} \, 
v(x_0) \wedge \tau'(x_0)
\, .
\end{equation}
Using the dominated convergence theorem and the fact that 
$g$ is continuous in $t$ and uniformly bounded we obtain
that, for every fixed $r$,
\begin{equation}
\label{e:diffuse5}
\frac{|A_2|}{\delta} \le \int_{\overline{B(x_0,r)}} \Big( \sup_{0\le t\le\delta} |g(t,x)-g(0,x)| \Big) \, d\mu'(x)
\, \mathop{\longrightarrow}\limits_{\delta\to 0} \, 
0
\, .
\end{equation}
Finally we let $E_\delta$ be the projection of $\Phi^{-1}(G) \cap \big([0,\delta]\times\R^n\big)$
onto $\R^n$ and notice that this is a closed set that contains $E''$ and for every fixed $r$
converges to $E''$ in the Hausdorff distance as $\delta\to 0$. 
Since $|g|$ is bounded by some constant $m$ (because so is~$\tau'$) 
we obtain that 
\begin{equation}
\label{e:diffuse6}
\frac{|A_3|}{\delta} 
\le \frac{m}{\delta} \,\lambda\big(\Phi^{-1}(G)\setminus F\big) 
\le m \, \mu'\big(E_\delta \setminus E''\big)
\, \mathop{\longrightarrow}\limits_{\delta\to 0} \, 
0
\, .
\end{equation}
Putting together \eqref{e:diffuse1}, \eqref{e:diffuse3}, \eqref{e:diffuse5} 
and \eqref{e:diffuse6} we obtain \eqref{e:diffuse0}.
\end{proof}

\section{The key identity}
\label{sec:main}
The main result in this section is identity~\eqref{e:d}
in Proposition~\ref{prop:form_d}.
Using this identity we obtain the fundamental
relation between the boundary of a normal $k$-current 
tangent to a distribution of $k$-planes $V$ and the 
distribution $\Vhat$ associated to $V$ 
(Theorem~\ref{thm:gpb-adv}).

\medskip
Through this section, $k$ and $n$ are integers 
that satisfy $2\le k<n$, $\Omega$ is an open set in $\R^n$, 
$V$ is a distribution of $k$-planes $V$ on $\Omega$ spanned 
by vectorfields $v_1,\dots, v_k$ of class $C^1$ on $\Omega$ 
and $v:=v_1\wedge\dots\wedge v_k$, as usual.

Moreover $T$ is a normal $k$-current on $\Omega$ which is tangent to $V$, 
which we write as $T=v\mu$ where $\mu$ is a suitable \emph{signed} measure
(see Remark~\ref{rem:tancurr-ii}).
In the sequel it is important to remember that $\mu$ is not necessarily
positive. 

As usual write $\bd T=\tau'\mu'$ where $\mu'$ is a positive
measure and $\tau'$ is a density with values in $(k-1)$-vectors.

\medskip
Notice that we assume that the distribution $V$ 
is \emph{globally} spanned by $k$ vectorfields, 
and not just locally (cf.~\S\ref{def:distrib}). 
There is however no loss of generality, 
because all statements we are interested in are
local in nature.

\begin{lemma}
\label{lem:alpha}
Take $v$ and $V$ as above and
consider the $(k-1)$-form 
\begin{equation}
\label{set_alpha}
\alpha:=\hodge(w\wedge u)
\end{equation}
where 
$u=u_1\wedge \dots\wedge u_{n-k-1}$ is a simple $(n-k-1)$-vector 
and $w=w_1\wedge w_2$ is a simple $2$-vectorfield on $\Omega$ 
with $w_1,w_2$ vectorfields of class $C^1$ tangent to~$V$. 
Then
\begin{enumeratethm}
\item
\label{lem:alpha-i}
$v\trace\alpha=0$ on $\Omega$;
\item
\label{lem:alpha-ii}
$\scalar{v}{\de\alpha}
=\scalar{v\wedge\dive w\wedge u}{\de x}$
on $\Omega$;
\item
\label{lem:alpha-iii}
$\scalar{v}{\de\alpha}\neq 0$ at every point of $\Omega$ 
where $v\wedge\dive w\wedge u\neq 0$.
\end{enumeratethm}
\end{lemma}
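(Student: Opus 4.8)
The three assertions follow from direct manipulations with the interior product, the Hodge-type operator $\hodge$, and the Leibniz rule for the divergence, so the plan is to treat each one in turn, using the identities collected in \S\ref{def:int_prod}, \S\ref{def:hodge} and \S\ref{leibniz}.

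\emph{Part \ref{lem:alpha-i}.} Since $w=w_1\wedge w_2$ is tangent to $V$ and $V$ is spanned by $v=v_1\wedge\dots\wedge v_k$, the $2$-vector $w(x)$ lies in $\esterno_2(V(x))$; hence $v\wedge w=0$ pointwise, because $v\wedge w$ would be a $(k+2)$-vector in the $k$-dimensional space $V(x)$. Now $\alpha=\hodge(w\wedge u)$ with $w\wedge u$ an $(n-k+1)$-vector, so $\alpha$ is a $(k-1)$-covector. Using the defining property $\scalar{v\trace\alpha}{\beta}=\scalar{v}{\alpha\wedge\beta}$ together with \eqref{hodge_prod} in the form $\hodge(v\trace\alpha)=(\hodge v)\wedge\alpha$ — or more directly the identity $\scalar{v}{\hodge(w\wedge u)\wedge\beta}=\pm\scalar{v\wedge\beta}{\hodge(w\wedge u)}=\pm\scalar{v\wedge\beta\wedge w\wedge u}{\de x}$, which vanishes since it contains the factor $v\wedge w=0$ — one concludes $v\trace\alpha=0$. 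I would write this out via the scalar-product characterization of $\hodge$ recorded after its definition, namely $\scalar{z}{\hodge(w\wedge u)}=\scalar{z\wedge w\wedge u}{\de x}$.

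\emph{Part \ref{lem:alpha-ii}.} Apply the exterior derivative to $\alpha=\hodge(w\wedge u)$. The cleanest route is to pair $v$ with $\de\alpha$ and push $v$ through $\hodge$: using $\scalar{v}{\hodge\eta}=\scalar{v\wedge\eta}{\de x}$ for an $(n-k)$-covector $\eta$, I need $\de(\hodge(w\wedge u))$, which by \eqref{e:div2}–type manipulations relates to $\hodge(\dive(w\wedge u))$; more practically, one expands $\de\alpha$ in coordinates using \eqref{e:diff}, or better, one uses that for a simple $h$-vectorfield $z$ one has $\hodge z = z\antitrace\de x$ and the Leibniz rule \eqref{e:leibniz-div} for the divergence with respect to the interior product. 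Combining the identity $\de(\hodge z)=\pm\,\hodge(\dive z)$ (which is exactly \eqref{e:div2} read backwards) applied to $z=w\wedge u$ — here $u$ is \emph{constant}, so $\dive(w\wedge u)=(\dive w)\wedge u\pm w\wedge\dive u=(\dive w)\wedge u$ by \eqref{e:cartan1}/\eqref{e:leibniz-div} since $\dive u=0$ — gives $\de\alpha=\pm\,\hodge((\dive w)\wedge u)$, and then $\scalar{v}{\de\alpha}=\pm\scalar{v\wedge(\dive w)\wedge u}{\de x}$. The only real work is bookkeeping the signs so that the final formula comes out with coefficient $+1$ as stated; I would fix signs by testing on a single basis monomial $e_\bfi$.

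\emph{Part \ref{lem:alpha-iii}.} This is immediate from \ref{lem:alpha-ii}: at a point where $v\wedge\dive w\wedge u\ne0$, this is a nonzero $n$-vector in $\R^n$, hence a nonzero multiple of $e=e_1\wedge\dots\wedge e_n$, so $\scalar{v\wedge\dive w\wedge u}{\de x}\ne0$, and therefore $\scalar{v}{\de\alpha}\ne0$.

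\emph{Main obstacle.} Nothing here is deep; the one point that needs care is Part \ref{lem:alpha-ii}, specifically making sure that all the sign conventions — the definition of $\hodge$ via $\antitrace$ and $\trace$ from \S\ref{def:hodge}, the anticommutation in \eqref{e:leibniz-div}, and the reordering signs $\sign(\bfj,\bfi)$ in \eqref{hodge_ei} — line up to produce exactly $\scalar{v}{\de\alpha}=\scalar{v\wedge\dive w\wedge u}{\de x}$ with no extra sign. I expect to verify the final sign by a one-line check on a convenient decomposable test vector rather than tracking every permutation.
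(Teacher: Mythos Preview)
Your outline for \ref{lem:alpha-i} and \ref{lem:alpha-iii} is essentially the paper's, modulo a notational slip in \ref{lem:alpha-i}: the expression ``$v\wedge\beta$'' is ill-formed since $v$ is a $k$-vector and $\beta$ a $1$-covector. The correct object is $v\trace\beta$, a $(k-1)$-vector in $\esterno_{k-1}(V)$, and then $(v\trace\beta)\wedge w\in\esterno_{k+1}(V)=0$; this is what the paper does.

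The real gap is in \ref{lem:alpha-ii}. You assert that because $u$ is constant,
\[
\dive(w\wedge u)=(\dive w)\wedge u\pm w\wedge\dive u=(\dive w)\wedge u,
\]
citing \eqref{e:leibniz-div} and \eqref{e:cartan1}. But there is \emph{no} Leibniz rule of this form for the divergence with respect to the exterior product; \eqref{e:leibniz-div} is for the \emph{interior} product $v\trace\omega$, and the paper's footnote after it explicitly warns that the exterior-product case is ``more complicated''. Concretely, from \eqref{e:div} and the antiderivation property of $\trace\,\de x_j$ one gets, for constant $u$,
\[
\dive(w\wedge u)=(\dive w)\wedge u+\sum_{j}\frac{\partial w}{\partial x_j}\wedge(u\trace\de x_j),
\]
and the second sum does not vanish in general. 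Equivalently, Cartan's formula \eqref{e:cartan1} applied to $w_1\wedge w_2\wedge u_1\wedge\dots\wedge u_{n-k-1}$ produces cross-bracket terms $[w_i,u_j]=\de_x w_i(u_j)$ which are nonzero even though each $u_j$ is constant.

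The identity you want is therefore false at the level of $\dive(w\wedge u)$; it only becomes true \emph{after wedging with $v$}. The paper's argument is exactly this: one writes $\dive(w\wedge u)=[w_1,w_2]\wedge u+w'$, where every term in $w'$ contains $w_1$ or $w_2$ as a factor; since $w_1,w_2$ are tangent to $V=\Span(v)$ one has $v\wedge w_i=0$, hence $v\wedge w'=0$, and finally $v\wedge[w_1,w_2]\wedge u=v\wedge\dive w\wedge u$ by \eqref{e:cartan2}. So the tangency of $w_1,w_2$ to $V$ is used a second time here, not just the constancy of $u$; your shortcut misses this and would fail without it.
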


\begin{proof}
To prove \ref{lem:alpha-i} we show that
$\scalar{v\trace\alpha}{\beta}=0$ for every 
$1$-covector~$\beta$. Indeed 
\begin{align*}
  \scalar{v\trace\alpha}{\beta} 
  =\scalar{v}{\alpha\wedge\beta}
& =(-1)^{k-1} \scalar{v}{\beta\wedge\alpha} \\
& =(-1)^{k-1} \scalar{v\trace\beta}{\alpha} \\
& =(-1)^{k-1} \scalar{v\trace\beta}{\hodge(w\wedge u)} \\
& =(-1)^{k-1} \scalar{(v\trace\beta)\wedge w\wedge u}{\de x}
  =0 \, ,
\end{align*}
where the last equality holds because 
$(v\trace\beta)\wedge w$ is a $(k+1)$-vectorfield 
tangent to the distribution of $k$-planes $V$, 
and therefore it is everywhere null.

\smallskip
Let us prove~\ref{lem:alpha-ii}.
Using \eqref{e:div2} we get
\begin{align}
   \scalar{v}{\de\alpha}
  =\scalar{v}{\de(\hodge(w\wedge u))}
& =\scalar{v}{\hodge(\dive (w\wedge u))} 
  \nonumber \\
& =\scalar{v \wedge (\dive (w\wedge u))}{\de x}
  \, . \label{e:3.2}
\end{align}
Since both $w$ and $u$ are simple 
we can use formula \eqref{e:cartan1} to compute the 
divergence of $w\wedge u$, 
obtaining 
\begin{equation}
\label{e:3.3}
\dive (w \wedge u)
=[w_1,w_2]\wedge u + w'
\, , 
\end{equation}
where 
\begin{align*}
w'
& = (\dive w_1) \, w_2\wedge u - (\dive w_2) \, w_1\wedge u \\
& \qquad + \sum_{i=1}^{n-k-1} (-1)^i \, \big( [w_1, u_i] \wedge w_2 - [w_2, u_i] \wedge w_1\big)
    \wedge \Big( \bigwedge_{j\neq i} u_j \Big)
	\, .
\end{align*}
Now, each $w_i$ belongs to $V=\Span(v)$ by assumption,  
hence $v\wedge w_i=0$, which implies that $v\wedge w'=0$.
Therefore using \eqref{e:3.3} and \eqref{e:cartan2}
we get
\[
 v \wedge \dive (w\wedge u)
=v \wedge [w_1,w_2]\wedge u
=v \wedge \dive w \wedge u
\, . 
\]
Plugging this formula into \eqref{e:3.2} proves~\ref{lem:alpha-ii}.

Finally, \ref{lem:alpha-iii} 
is an immediate consequence of~\ref{lem:alpha-ii}.
\end{proof}

\begin{proposition}
\label{prop:form_d}
Take $v$, $V$, $T=v\mu$ and $\bd T=\tau'\mu'$ as above,
and let $w$ be a $2$-vectorfield of class $C^1$ on $\Omega$ which is tangent to~$V$.
Then the following identity of measures (with values in $(k+1)$-vectors)
holds:
\begin{equation}
\label{e:d}
(\tau'\wedge w) \, \mu' = (v\wedge\dive w) \, \mu
\, .
\end{equation}
\end{proposition}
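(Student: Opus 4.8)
The plan is to obtain \eqref{e:d} by testing the boundary relation $\scalar{\bd T}{\beta}=\scalar{T}{\de\beta}$ against a carefully chosen family of $(k-1)$-forms $\beta$ built out of $w$ via Lemma~\ref{lem:alpha}, and then using non-degeneracy of the exterior pairing to upgrade the resulting scalar identities to the claimed identity of $(k+1)$-vector-valued measures. First I would reduce to the case where $w$ is \emph{simple}. Since $v_1,\dots,v_k$ is a $C^1$ frame for $V$, any $C^1$ $2$-vectorfield $w$ tangent to $V$ can be written as $w=\sum_{1\le i<j\le k}a_{ij}\,(v_i\wedge v_j)=\sum_{i<j}(a_{ij}v_i)\wedge v_j$ with $C^1$ coefficients $a_{ij}$ (Cramer's rule), and each summand is a simple $2$-vectorfield whose factors $a_{ij}v_i$ and $v_j$ are of class $C^1$ and tangent to $V$. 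Both sides of \eqref{e:d} are additive in $w$ (the divergence is linear and so is $w\mapsto\tau'\wedge w$), so it suffices to prove \eqref{e:d} when $w=w_1\wedge w_2$ with $w_1,w_2$ of class $C^1$ and tangent to~$V$.

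Fix such a simple $w$, fix $\varphi\in C^\infty_c(\Omega)$ and a multi-index $\bfj\in I(n,n-k-1)$, and set $u:=e_\bfj$ and $\alpha:=\hodge(w\wedge u)$ as in \eqref{set_alpha}. The form $\varphi\alpha$ is of class $C^1$ with compact support, so by a standard mollification argument (using that $T$ and $\bd T$ have finite mass) the relation $\scalar{\bd T}{\varphi\alpha}=\scalar{T}{\de(\varphi\alpha)}$ holds. I would then compute the two sides. By the Leibniz rule $\de(\varphi\alpha)=\de\varphi\wedge\alpha+\varphi\,\de\alpha$, and $\scalar{v}{\de\varphi\wedge\alpha}=(-1)^{k-1}\scalar{v\trace\alpha}{\de\varphi}=0$ by Lemma~\ref{lem:alpha}\ref{lem:alpha-i}; hence, since $T=v\mu$,
\[
\scalar{T}{\de(\varphi\alpha)}=\int\varphi\,\scalar{v}{\de\alpha}\,d\mu
=\int\varphi\,\scalar{v\wedge\dive w\wedge u}{\de x}\,d\mu
\]
by Lemma~\ref{lem:alpha}\ref{lem:alpha-ii}. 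On the other hand, since $\alpha=\hodge(w\wedge u)$, the defining property of $\hodge$ gives $\scalar{\tau'}{\alpha}=\scalar{\tau'\wedge w\wedge u}{\de x}$, so $\scalar{\bd T}{\varphi\alpha}=\int\varphi\,\scalar{\tau'\wedge w\wedge u}{\de x}\,d\mu'$. Equating the two sides yields
\[
\int\varphi\,\scalar{(\tau'\wedge w)\wedge e_\bfj}{\de x}\,d\mu'
=\int\varphi\,\scalar{(v\wedge\dive w)\wedge e_\bfj}{\de x}\,d\mu
\]
for every $\varphi\in C^\infty_c(\Omega)$ and every $\bfj\in I(n,n-k-1)$.

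To conclude, I would observe that for any $(k+1)$-vector $a$ and any $\bfj\in I(n,n-k-1)$ the scalar $\scalar{a\wedge e_\bfj}{\de x}$ equals $\pm a_\bfi$, where $\bfi\in I(n,k+1)$ is the multi-index complementary to $\bfj$; hence, as $\bfj$ ranges over $I(n,n-k-1)$, these scalars recover all coordinates of $a$. Applying this with $a=\tau'\wedge w$ and $a=v\wedge\dive w$, the displayed identity (with $\varphi$ arbitrary) shows that the vector-valued measures $(\tau'\wedge w)\mu'$ and $(v\wedge\dive w)\mu$ have the same coordinates, hence coincide; this is \eqref{e:d} for simple $w$, and the first reduction gives it for all $C^1$ $2$-vectorfields $w$ tangent to~$V$. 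I expect the only genuinely delicate points to be the reduction to simple $w$ (checking that the coefficients $a_{ij}$ are $C^1$ and that additivity over finite sums is legitimate) and the justification that the boundary relation may be applied to the merely $C^1$ test form $\varphi\alpha$; the remaining computations are short and rest entirely on Lemma~\ref{lem:alpha} and the identities for $\hodge$.
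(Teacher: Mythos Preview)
Your proposal is correct and follows essentially the same route as the paper: reduce to simple $w=w_1\wedge w_2$, test with the forms $\alpha=\hodge(w\wedge u)$ from Lemma~\ref{lem:alpha}, use \ref{lem:alpha-i} to kill the extra term and \ref{lem:alpha-ii} to identify the remaining one, and then let $u$ range to recover the full $(k+1)$-vector identity. The only cosmetic difference is that the paper packages your computation via the Leibniz rule~\eqref{e:leibinz-curr} for currents (from $T\trace\alpha=0$ it reads off $(\bd T)\trace\alpha=T\trace\de\alpha$ directly as an identity of $0$-currents), whereas you unwind the same identity by pairing against $\varphi\alpha$; this also makes your mollification worry disappear, since \eqref{e:leibinz-curr} already holds for $C^1$ forms.
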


\begin{proof}
The proof is divided in two cases.

\passo{Case~1: $w=w_1\wedge w_2$ with $w_1,w_2$ 
vectorfields of class $C^1$ tangent to $V$.}
Fix a simple $(n-k-1)$-vector $u$ 
and let $\alpha$ be the $(k-1)$-form defined in \eqref{set_alpha}. 
Recalling definition \eqref{e:intprod-curr}
and Lemma~\ref{lem:alpha}\ref{lem:alpha-i} we obtain that 
$T\trace\alpha=(v\trace\alpha)\mu=0$. 
Therefore formula \eqref{e:leibinz-curr} yields
\begin{align*}
0 = (-1)^{k-1} \bd(T\trace\alpha) 
& = \bd T\trace\alpha - T\trace\de\alpha \\
& = (\tau'\trace\alpha) \, \mu' - (v\trace\de\alpha) \, \mu \\
& = \scalar{\tau'\wedge w\wedge u}{\de x} \, \mu'
    - \scalar{v\wedge\dive w\wedge u}{\de x} \, \mu
\end{align*}
(in the last equality we used Lemma~\ref{lem:alpha}\ref{lem:alpha-ii}).
Hence
\[
(\tau'\wedge w \wedge u) \, \mu'
= (v\wedge\dive w\wedge u) \, \mu
\,,
\]
which implies \eqref{e:d} by the arbitrariness of $u$.

\passo{Case~2: $w$ is arbitrary.}
Then $w$ can be written in the form 
\[
w=\sum_{1\le i<j\le k} a_{ij} \, v_i\wedge v_j
\]
for suitable functions $a_{ij}$ of class $C^1$. 
Then identity~\eqref{e:d} holds for each
addendum $w_{ij}:=a_{ij} \, v_i\wedge v_j$ as just proved, 
and therefore it holds for $w$, too.
\end{proof}

Using formula \eqref{e:d} we can easily establish 
the following key relation between
the boundary of $T$ and the distribution
$\Vhat$ defined in~\S\ref{def:vhat}.

\begin{theorem}
\label{thm:gpb-adv}
Take $v$, $V$, $T=v\mu$ and $\bd T=\tau'\mu'$ as above,
and let $\mu'=\mu'_a+\mu'_s$ be the Lebesgue decomposition 
of the measure $\mu'$ \wrt $\mu$.
Then 
\begin{enumeratethm}
\item
\label{thm:gpb-adv-i}
$\Span(\tau'(x)) \subset V(x)$ for $\mu'_s$-a.e.~$x$;
\item
\label{thm:gpb-adv-ii}
$\Span(\tau'(x)) + V(x) = \Vhat(x)$ for $\mu'_a$-a.e.~$x$.
\end{enumeratethm}
\end{theorem}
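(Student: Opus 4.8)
The plan is to exploit the key identity \eqref{e:d} from Proposition~\ref{prop:form_d}, reading it separately against the singular and absolutely continuous parts of $\mu'$ with respect to $\mu$. Write $\mu'=\mu'_a+\mu'_s$ and correspondingly decompose; since $\mu$ may be signed, let $\theta$ be the Radon--Nikod\'ym density of $\mu'_a$ with respect to $|\mu|$, and note $\mu\ll|\mu|$ with density $\sigma=\pm 1$. The identity \eqref{e:d} says that for every $C^1$ $2$-vectorfield $w$ tangent to $V$,
\[
(\tau'\wedge w)\,\mu' = (v\wedge\dive w)\,\mu
\]
as measures with values in $(k+1)$-vectors. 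Decomposing both sides along $|\mu|$ and its complement, the right-hand side is absolutely continuous with respect to $|\mu|$, hence with respect to $\mu'_a$ on the part where $\mu'$ sees $\mu$; this forces the $\mu'_s$-part of the left-hand side to vanish: $(\tau'\wedge w)\,\mu'_s = 0$, i.e.\ $\tau'(x)\wedge w(x)=0$ for $\mu'_s$-a.e.~$x$, for every such $w$.

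For part \ref{thm:gpb-adv-i}, I would then argue as follows. Fix a point $x$ (outside a $\mu'_s$-null set) and let $W:=\Span(\tau'(x))$; we want $W\subset V(x)$, which has dimension $k$. The relation $\tau'(x)\wedge w=0$ holds for every $w$ of the form $v_i(x)\wedge v_j(x)$, and more generally for every simple $2$-vector $w\in\esterno_2(V(x))$. Since $V(x)$ has dimension $k$, the $2$-vectors $v_i\wedge v_j$ span $\esterno_2(V(x))$, and $\tau'(x)$ is a $(k-1)$-vector; by choosing $w$ ranging over a basis of $\esterno_2(V(x))$ — or, when $k-1\le\dim V(x)$ issues arise, applying Lemma~\ref{lem:span0}\ref{lem:span0-2} with $W$ replaced by $V(x)$ and the appropriate exterior power — one deduces $\Span(\tau'(x))\subset V(x)$. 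The clean route is Lemma~\ref{lem:span0}: condition \ref{lem:span0-1} or \ref{lem:span0-2} applies with the subspace $V(x)$ of dimension $k$ playing the role of $W$, since $\tau'$ is a $(k-1)$-vector and $\tau'\wedge w=0$ for all $w\in\esterno^{2}(V(x))$, which is exactly the case $h=2=d-k+1$ there.

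For part \ref{thm:gpb-adv-ii}, restrict \eqref{e:d} to the $\mu'_a$-part: writing $\mu'_a=\theta\,|\mu|$ and $\mu=\sigma|\mu|$, the identity becomes, $|\mu|$-a.e.\ on the set where $\theta\ne 0$,
\[
\theta(x)\,\big(\tau'(x)\wedge w(x)\big) = \sigma(x)\,\big(v(x)\wedge\dive w(x)\big)
\]
for every $C^1$ $2$-vectorfield $w$ tangent to $V$. The right-hand side, as $w$ varies, detects exactly the space $\Vhat(x)$: indeed by Proposition~\ref{prop:vhat} (the equalities $\Vhat = V + V_4$ and $\Vhat = V + V_3$), the vectors $\dive w$ for $w$ tangent to $V$, together with $V(x)$, span $\Vhat(x)$, and wedging with $v(x)$ collapses the $V(x)$-part while recording the rest modulo $V(x)$. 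So the displayed identity shows that $v(x)\wedge\big(\tau'(x)\wedge w(x)\big)=0$ is tied to whether $\tau'(x)$ extends $V(x)$, and comparing spans gives $\Span(\tau'(x))+V(x)\subset\Vhat(x)$ as well as the reverse inclusion. Concretely: for the inclusion $\supset$, any vector in $\Vhat(x)\setminus V(x)$ is of the form (a component of) $\dive w(x)$; since $v\wedge\dive w = (\theta/\sigma)\,\tau'\wedge w$, every such $\dive w(x)$ lies in $V(x)+\Span(\tau'(x))$ modulo $V(x)$, giving $\Vhat(x)\subset V(x)+\Span(\tau'(x))$. For $\subset$, part \ref{thm:gpb-adv-i}'s argument run on $\mu'_a$ instead of $\mu'_s$ would give only $\Span(\tau')\subset V$ unless $\theta\ne0$, so on $\{\theta\ne 0\}$ the identity forces $v\wedge\tau'\wedge w$ to match $v\wedge\dive w$, and since the latter lands in $v\wedge\Vhat$, we get $\tau'(x)\in \esterno_{k-1}(\Vhat(x))$, i.e.\ $\Span(\tau'(x))\subset\Vhat(x)$; combined with $V(x)\subset\Vhat(x)$ this yields $\Span(\tau'(x))+V(x)\subset\Vhat(x)$, and together with the reverse inclusion, equality.

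The main obstacle I anticipate is the bookkeeping around the signed measure $\mu$ and the Lebesgue decomposition: one must be careful that "$\mu'_a$-a.e." statements derived from an identity of vector-valued measures are legitimate, i.e.\ that one may cancel the scalar densities $\theta,\sigma$ where they are nonzero, and that the passage from "$\tau'\wedge w=0$ for each fixed $w$ in a countable dense family" to "for all tangent $w$ simultaneously, pointwise $\mu'$-a.e." is handled by choosing a countable generating set of $C^1$ vectorfields (e.g.\ the $v_i$ and coordinate multiples) — the linearity in $w$ of both sides of \eqref{e:d} makes this routine but it should be stated. The algebraic heart — translating $\tau'\wedge w = 0$ for all $w\in\esterno_2(V(x))$ into $\Span(\tau')\subset V(x)$, and the matching statement with $\Vhat(x)$ — is exactly what Lemma~\ref{lem:span0} and Proposition~\ref{prop:vhat} are designed to supply.
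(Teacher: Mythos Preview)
Your proposal is correct and follows essentially the same approach as the paper: decompose identity~\eqref{e:d} along the Lebesgue decomposition of $\mu'$ with respect to $\mu$, pass to pointwise identities via a countable dense family of tangent $2$-vectorfields, and then invoke Lemma~\ref{lem:span0} (parts~\ref{lem:span0-2}, \ref{lem:span0-1}, \ref{lem:span0-3} for the three span inclusions) together with Proposition~\ref{prop:vhat}. The paper writes $\mu'_a=\rho\mu$ directly rather than detouring through $|\mu|$, and there is a small slip in your text where ``$v\wedge\tau'\wedge w$'' should read ``$\tau'\wedge w$'', but the structure and the key algebraic steps are identical.
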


\begin{proof}
We denote by $X(V)$ the space of
all $2$-vectorfield of class $C^1$ on $\Omega$ 
tangent to $V$ and write $\mu'_a=\rho\mu$ 
for a suitable density $\rho$.

Let $w \in X(V)$. Then equation \eqref{e:d} can be rewritten as
\begin{align}
\tau'\wedge w 
& =0 
\hskip 44.4 pt \quad\text{for $\mu'_s$-a.e.~$x$,} 
	\label{e:3.5} \\
\tau'\wedge w 
& = {\textstyle\frac{1}{\rho}} v\wedge\dive w
\quad\text{for $\mu'_a$-a.e.~$x$.} 
	\label{e:3.5.1}
\end{align}

The proof is now divided in three steps. The
first one contains the proof of statement~\ref{thm:gpb-adv-i}, 
while the others give statement~\ref{thm:gpb-adv-ii}.

\passo{Step~1: proof of statement~\ref{thm:gpb-adv-i}.}
Equation~\eqref{e:3.5} implies that
for every $w\in X(V)$ there exists a $\mu'_s$-null set $N_w$
such that $\tau'(x)\wedge w(x)=0$ for every $x\notin N_w$.
Take now a countable dense family $X'\subset X(V)$, 
and let $N$ be the union of $N_w$ over all $w\in X'$.
Then $N$ is $\mu'_s$-null and it is easy to check that
for every $x\notin N$ and every $w\in X(V)$ there holds
$\tau'(x)\wedge w(x)=0$, which means that
\[
\tau'(x)\wedge w=0
\quad\text{for every $2$-vector $w$ in $V(x)$,}
\]
and therefore 
$\Span(\tau'(x))\subset V(x)$ 
(apply Lemma~\ref{lem:span0} with assumption~\ref{lem:span0-2}).

\passo{Step~2: $\Span(\tau'(x))\subset\Vhat(x)$ for $\mu'_a$-a.e.~$x$.}
Let $w\in X(V)$. 
Using equation~\eqref{e:3.5.1} 
and the inclusion $\Span(v\wedge\dive w)\subset\Vhat$
(Proposition~\ref{prop:vhat})
we obtain
\[
\Span\big(\tau'(x)\wedge w(x) \big)\subset \Vhat(x)
\ \text{for $\mu'_a$-a.e.~$x$,}
\]
and proceeding as in Step~1 we find a $\mu'_a$-null set $N$ 
such that, for every $x\notin N$, 
\[
\Span\big(\tau'(x)\wedge w \big)\subset \Vhat(x)
\quad\text{for every $2$-vector $w$ in $V(x)$,}
\]
and then $\Span(\tau'(x))\subset \Vhat(x)$ 
(apply Lemma~\ref{lem:span0} with assumption~\ref{lem:span0-1}).

\passo{Step~3: $\Vhat(x)\subset V(x)+\Span(\tau'(x))$ for $\mu'_a$-a.e.~$x$.}
Let $w\in X(V)$. 
Using equation~\eqref{e:3.5.1}
and the inclusion $\Span(\tau'\wedge w) \subset \Span(\tau')+V$, 
we obtain that 
\[
\Span\big(v(x)\wedge \dive w(x) \big)
\subset \Span(\tau'(x))+V(x)
\ \text{for $\mu'_a$-a.e.~$x$.}
\]
Proceeding as in Step~1 we find a $\mu'_a$-null set $N$ 
such that, for every $x\notin N$ and every $w\in X(V)$,
\[
\Span\big(v(x)\wedge \dive w(x) \big)
\subset V(x) + \Span(\tau'(x))
\, , 
\]
and using Lemma~\ref{lem:span0} with assumption~\ref{lem:span0-3} 
we obtain that, for every $x\notin N$,
\[
\dive w(x) \in V(x)+\Span(\tau'(x))
\, .
\]
Then the claim follows using Proposition~\ref{prop:vhat}.
\end{proof}

\begin{remark}
\label{rem:gpb-adv}
Recall that $\Vhat(x)$ agrees with $V(x)$ for every $x$ in the 
involutivity set $N(V,k)=\Omega\setminus N(V)$, and strictly contains
$V(x)$ for every $x$ in the non-involutivity set $N(V)$.  
Then Theorem~\ref{thm:gpb-adv} implies that the inclusion that defines 
the geometric property of the boundary for $T$, namely
\[
\Span(\tau'(x)) \subset V(x)
\, ,
\]
holds for $\mu'_s$-a.e.~$x\in\Omega$
and for $\mu'_a$-a.e.~$x\in\Omega\setminus N(V)$, 
and does not hold for $\mu'_a$-a.e.\ $x\in N(V)$.
\end{remark}

\begin{remark}
The case $k=2$ and $n=3$ of Proposition~\ref{prop:form_d} is especially significant. 
In this case a form $\alpha$ with properties \ref{lem:alpha-i}-\ref{lem:alpha-iii}
in Lemma~\ref{lem:alpha} is simply given by $\alpha:=\hodge v$, and
equation \eqref{e:d} in Proposition~\ref{prop:form_d} reduces to
\[
(\tau'\wedge v) \, \mu' = (v\wedge\dive v) \, \mu
\, .
\]
\end{remark}

\section{Proofs of the results in Section~\ref{sec:intro}}
\label{sec:proofs}
In this section we prove 
Theorem~\ref{thm:diffuse}, 
Corollary~\ref{cor:normalfrob}, and Theorems~\ref{thm:frobenius}, 
\ref{thm:gpb-frob} and \ref{thm:ntv} (in this order).

We follow the notation of Section~\ref{sec:main}.
In particular $V$ is spanned by $v_1,\dots,v_k$, 
$v:=v_1\wedge,\dots\wedge v_k$, and $T=\tau\mu$ 
where $\mu$ is a suitable \emph{signed} measure.
Note that it is sufficient to prove the statements 
above for this measure $\mu$, despite the fact that
it may be not positive (cf.~Remark~\ref{rem:curr-ii}).

\begin{proof}[Proof of Theorem~\ref{thm:diffuse}]
We denote by $X(V)$ the space of
all $2$-vectorfield of class $C^1$ on $\Omega$ 
tangent to $V$.
The proof is divided in several steps.

\passo{Step~1: proof of statement~\ref{thm:diffuse-i}.}
Let $w\in X(V)$ and let 
$\mu=\mu_a+\mu_s$ be the Lebesgue decomposition 
of $\mu$ \wrt $\mu'$.
Then identity \eqref{e:d} yields $(v\wedge \dive w)\,\mu_s=0$, 
and therefore we can find a $|\mu_s|$-null set $N_w$ such that, 
for every $x\notin N_w$,
\[
v(x) \wedge \dive w(x) =0
\, .
\]
Proceeding as in Step~1 of the proof of Theorem~\ref{thm:gpb-adv}
we find a $|\mu_s|$-null set $N$ such that the previous 
equation holds for every $x\notin N$ and every $w\in X(V)$, 
and applying Lemma~\ref{lem:span0} with assumption~\ref{lem:span0-3}
we obtain 
\[
\dive w(x) \in V(x)
\, .
\]
By Proposition~\ref{prop:vhat} this means $V(x)=\Vhat(x)$
at every $x\notin N$, 
that is, $V$ is involutive at every~$x\notin N$
or, in other words, the non-involutivity set $N(V)$ is $|\mu_s|$-null, 
which finally implies $\mu\trace N(V) \ll\mu_a\ll\mu'$.

\medskip
For the next step we denote by $\bar\mu$ the 
restriction of $|\mu|$ to $N(V)$. 
Recall that $W(\bar\mu,\cdot)$ is the decomposability 
bundle of $\bar\mu$ (see~\S\ref{def:decbun}).

\passo{Step~2: $W(\bar\mu, x) \supset \Vhat(x)$ 
for $\bar\mu$-a.e.~$x$.}
Indeed, since $T=v\mu$ and $\bd T=\tau'\mu'$ are normal currents, 
Theorem 5.10 in \cite{Alb-Mar} implies that 
the decomposability bundles of the measures 
$|\mu|$ and $\mu'$ contain the span of $\tau$ and $\tau'$ 
respectively, that is,
\begin{align}
W(|\mu|,x) 
& \supset \Span(v(x)) = V(x)
  \quad\text{for $|\mu|$-a.e.~$x$,}
  \label{e:3.7} \\
W(\mu',x) 
& \supset \Span(\tau'(x)) 
  \hskip 46 pt \text{for $\mu'$-a.e.~$x$.}
  \label{e:3.8}
\end{align}
On the other hand $\bar\mu$ is absolutely continuous 
\wrt $|\mu|$ and also \wrt $\mu'$ (by statement~\ref{thm:diffuse-i}) 
and therefore Proposition~2.9(i) in \cite{Alb-Mar} yields
\begin{equation}
\label{e:3.9}
W(\bar\mu, x) = W(|\mu|, x) = W(\mu', x)
\quad\text{for $\bar\mu$-a.e.~$x$.}
\end{equation}
Putting together \eqref{e:3.7}, \eqref{e:3.8} and \eqref{e:3.9} 
we obtain
\[
W(\bar\mu, x) \supset V(x) + \Span(\tau'(x))
\quad\text{for $\bar\mu$-a.e.~$x$,}
\]
and we conclude the proof of the claim recalling that
$V+\Span(\tau')=\Vhat$ by Proposition~\ref{prop:vhat}.

\passo{Step~3: proof of statement~\ref{thm:diffuse-ii}.}
For every $d=k+1, \dots, n$ consider the measure
$\mu_d:=|\mu|\trace N(V,d)$. 
Since $\mu_d$ is absolutely continuous
\wrt $\bar\mu$, using Proposition~2.9(i) in \cite{Alb-Mar}
and Step~2 we obtain that
\[
W(\mu_d, x) = W(\bar\mu, x) \supset \Vhat(x)
\quad\text{for $\mu_d$-a.e.~$x$,}
\]
and in particular $\dim(W(\mu_d, x)) \ge d$ for $\mu_d$-a.e.~$x$.
We now conclude using Proposition~\ref{prop:decomp} 
or Lemma~\ref{lem:decomp1}.

\medskip
For the rest of the proof we fix $d=k+1, \dots, n$
and consider the following sets:
\begin{align*}
\Omega_d 
& :=\big\{x\in\Omega\colon \dim(\Vhat(x))\ge d \big\} 
	= N(V,d) \cup\dots\cup N(V,n) \, , \\
F
& :=\big\{x\in\Omega_d\colon V(x) \subset \Span(\tau'(x)) \big\} \, , \\
E_i
& :=\big\{x\in\Omega_d\colon v_i(x) \notin \Span(\tau'(x)) \big\}
\quad\text{with $i=1, \dots, k$.}
\end{align*}

\passo{Step~4: $\mu'\trace F \ll \Haus^d \ll \Haus^{d-1}$.}
Using the identity $\Vhat= V + \Span(\tau')$ (Proposition~\ref{prop:vhat}),
we have that for $x\in F$ the linear space $\Span(\tau'(x))$ contains $\Vhat(x)$ 
and therefore has dimension at least $d$.
Thus \eqref{e:3.8} implies that $W(\mu',x)$ has dimension at least $d$ 
for $\mu'$-a.e.~$x\in F$, and the claim follows from 
Proposition~\ref{prop:decomp}.

\passo{Step~5: $\mu'\trace E_i \ll \Haus^{d-1}$ for $i=1, \dots, k$.}
We prove this claim by applying Proposition~\ref{prop:diffuse} 
to the open set $\Omega_d$, the current $T=v\mu=\tau|\mu|$
and the vectorfield~$v_i$.
To this end we notice that 
\begin{itemize}
[leftmargin=25pt, itemsep=2pt]
\item
$|\mu|\trace\Omega_d\ll\Haus^d$ by statement~\ref{thm:diffuse-ii}
and the definition of~$\Omega_d$;
\item
$v_i\wedge v=0$ everywhere in $\Omega$ and then also in $\Omega_d$;
\item
$v_i\wedge \tau'\ne 0$ on $E_i$ by the definition of $E_i$.
\end{itemize}

\passo{Step~6: $\mu'\trace\Omega_d \ll \Haus^{d-1}$, which implies 
statement~\ref{thm:diffuse-iii}.}
To prove the first part of the claim 
we use that $\Omega_d=F\cup E_1\cup\dots\cup E_k$ and Steps~4 and~5.
To prove statement~\ref{thm:diffuse-iii} use that $N(V,d) \subset \Omega_d$.
\end{proof}

\begin{proof}[Proof of Corollary~\ref{cor:normalfrob}]
Since the set $N(V)$ is open, proving that the support of $\mu$ 
does not intersect $N(V)$ is equivalent to showing that 
$\mu\trace N(V)=0$.

If condition~\ref{cor:normalfrob-a} holds, 
then $\mu$ is singular \wrt $\I^d_t$
for every $d\ge k+1$, and this fact and 
Theorem~\ref{thm:diffuse}\ref{thm:diffuse-ii} imply $\mu\trace N(V,d)=0$, 
and since $N(V)$ is the union of all $N(V,d)$ with $d\ge k+1$,  
we obtain $\mu\trace N(V)=0$, as desired.

If condition~\ref{cor:normalfrob-b} holds then
Theorem~\ref{thm:diffuse}\ref{thm:diffuse-i} yields $\mu\trace N(V)=0$. 

To conclude the proof we notice that
condition~\ref{cor:normalfrob-c} implies
condition~\ref{cor:normalfrob-a}, 
and condition~\ref{cor:normalfrob-d} implies
condition~\ref{cor:normalfrob-b}.
\end{proof}

\begin{proof}[Proof of Theorem~\ref{thm:frobenius}]
Apply Corollary~\ref{cor:normalfrob} with condition~\ref{cor:normalfrob-c}.
\end{proof}

\begin{proof}[Proof of Theorem~\ref{thm:gpb-frob}]
We begin with the implication 
\ref{thm:gpb-frob-i}$\,\Rightarrow\,$\ref{thm:gpb-frob-ii}.
The geometric property of the boundary for $T$, namely inclusion \eqref{e:gpb}, 
and Theorem~\ref{thm:gpb-adv}\ref{thm:gpb-adv-ii} imply 
that $V=\Vhat$ $\mu'_a$-a.e., where $\mu'_a$ is the absolutely continuous
part of $\mu'$ \wrt~$\mu$. 

Since $V(x)\ne \Vhat(x)$ at every $x\in N(V)$ we infer that 
$\mu'_a\trace N(V)=0$ and using Theorem~\ref{thm:diffuse}\ref{thm:diffuse-i}
we deduce that $\mu\trace N(V)=0$.
Since $N(V)$ is open, this means that the support 
of $\mu$ does not intersect~$N(V)$.

\smallskip
We now prove  
\ref{thm:gpb-frob-ii}$\,\Rightarrow\,$\ref{thm:gpb-frob-i}.
The assumption $\Vhat(x)=V(x)$ for $|\mu|$-a.e.~$x$
and Theorem~\ref{thm:gpb-adv}\ref{thm:gpb-adv-ii} 
imply that $\Span(\tau'(x))\subset V(x)$ for $\mu'_a$-a.e.~$x$.  
On the other hand this inclusion holds also for $\mu'_s$-a.e.~$x$
by Theorem~\ref{thm:gpb-adv}\ref{thm:gpb-adv-i}, and therefore $T$ has
the geometric property of the boundary.
\end{proof}

\begin{proof}[Sketch of proof of Theorem~\ref{thm:ntv}]
Assume by contradiction that there exists $z\in A$
such that $u(z)\in N(V)$.
Since $N(V)$ is open and $u$ is continuous and  of class $W^{1,p}_\loc$
we can find a ball $U$ centered at $z$ such that 
\begin{itemize}
[leftmargin=25pt, itemsep=2pt]
\item
$u(U)$ is contained in $N(V)$;
\item
the restriction of $u$ to $\bd U$ belongs to $W^{1,p}(\bd U)$.
\end{itemize}
Then the graph of the restriction of $u$ to $U$, 
denoted by $\Gamma$, is a $k$-dimensional rectifiable
set with $\Haus^k(\Gamma)<+\infty$. 
Moreover it is proved in \cite{GMS}, \S2.5, Theorem~1, 
that the rectifiable current canonically associated
to $\Gamma$, denoted by $\ic{\Gamma}$, has boundary with 
finite mass, and therefore is an integral current 
in $\R^k\times\R^n$ (for this step we need the assumption~$p>k$).

Since $\nabla u$ has maximal rank, possibly replacing 
$U$ with a suitable open subset, we have that 
the pushforward of $\ic{\Gamma}$ through the projection 
$p:\R^k\times\R^n\to\R^n$ is a non-trivial integral
$k$-current tangent to $V$.
But the support of such current is contained in $N(V)$, 
thus violating Theorem~\ref{thm:frobenius}.
\end{proof}

    %
    %
    %
    %
\bibliography{bibliography}
\bibliographystyle{siam}

    %
    %
    %
    %
\vskip .5 cm
{\parindent = 0 pt\footnotesize
G.A.
\par
\smallskip
Dipartimento di Matematica,
Universit\`a di Pisa
\par
largo Pontecorvo 5,
56127 Pisa,
Italy
\par
\smallskip
e-mail: \texttt{giovanni.alberti@unipi.it}

\bigskip
A.M.
\par
\smallskip
Dipartimento di Tecnica e Gestione dei Sistemi Industriali (DTG), 
Universit\`a di Padova 
\par
stradella S.~Nicola 3, 
36100 Vicenza, 
Italy
\par
\smallskip
e-mail: \texttt{annalisa.massaccesi@unipd.it}

\bigskip
E.S.
\par
\smallskip
St.~Petersburg Branch of the Steklov Institute of Mathematics 
of the Russian Academy of Sciences
\par
Fontanka~27,
191023 St.~Petersburg, 
Russia
\par
\smallskip
and
\par
\smallskip
Faculty of Mathematics, Higher School of Economics, 
\par
Usacheva~6, 
119048 Moscow, 
Russia
\par
\smallskip
e-mail: \texttt{stepanov.eugene@gmail.com}
\par
}

\end{document}